\documentclass{article}

\usepackage[utf8]{inputenc}
\usepackage[T1]{fontenc}
\usepackage[english]{babel}
\usepackage{amsmath}
\usepackage{amsthm}
\usepackage{amsfonts}
\usepackage{amssymb}  
\usepackage{xcolor}
\usepackage[a4paper,left=3cm,right=3cm]{geometry}
\usepackage{graphicx}
\usepackage{csquotes}
\usepackage{hyperref}

\usepackage[backend=biber,style=numeric]{biblatex}
\addbibresource{biblio.bib}


\DeclareBibliographyDriver{article}{%
  \printnames{author}%
  \iffieldundef{title}{}{\addcomma\addspace\mkbibemph{\printfield{title}}}%
  \iffieldundef{journaltitle}{%
    \iffieldundef{number}{}{\addcomma\addspace\printfield{number}}%
  }{%
    \addcomma\addspace\printfield{journaltitle}%
    \iffieldundef{year}{}{\addcomma\addspace\printfield{year}}%
  }%
  \iffieldundef{pages}{}{\addcomma\addspace\printfield{pages}}%
  \finentry}

\DeclareBibliographyDriver{book}{%
  \printnames{author}%
  \iffieldundef{title}{}{\addcomma\addspace\mkbibemph{\printfield{title}}}%
  \iffieldundef{publisher}{}{\addcomma\addspace\printfield{publisher}}%
  \iffieldundef{year}{}{\addcomma\addspace\printfield{year}}%
  \iffieldundef{pages}{}{\addcomma\addspace\printfield{pages}}%
  \finentry}

\theoremstyle{definition}

\newcounter{sharedcounter}[subsection]

\newtheorem{lemma}[sharedcounter]{Lemma}
\newtheorem{proposition}[sharedcounter]{Proposition}
\newtheorem{definition}[sharedcounter]{Definition}
\newtheorem{remark}[sharedcounter]{Remark}
\newtheorem{corollary}{Corollary}
\newtheorem{theorem}{Theorem}

\DeclareMathOperator{\pic}{\text{Pic}}
\DeclareMathOperator{\ch}{ch}
\DeclareMathOperator{\rk}{rk}

\DeclareMathOperator{\td}{td}
\DeclareMathOperator{\Td}{Td}

\newcommand{\ov}[1]{\overline{#1}}

\newcommand{\eq}[2][1]{
\begin{equation*}
	\addtolength{\fboxsep}{0pt}
	\begin{alignedat}{#1}
	#2
	\end{alignedat}
\end{equation*}
}

\newcommand{\eqtag}[3][1]{
\begin{equation}
   \label{#3}
	\begin{alignedat}{#1}
	#2
	\end{alignedat}
\end{equation}
}

\begin{document}

\title{Chern classes of the multilayer fractional quantum Hall bundle on Riemann surfaces}

\author{Mar\'{i}a ABAD ALDONZA and Florent DUPONT}
\date{}

\maketitle

\vspace{1cm}

\begin{abstract}

The so-called  multilayer wave functions were introduced in the study of the fractional Quantum Hall effect by Halperin and others. They are defined with the help of a symmetric matrix $K$ in $M^k(\mathbb{N})$, which encodes the couplings between the $k$ layers where particles live. We study the multilayer quantum states in the case where each layer is a Riemann surface of genus $g$. These states form a vector bundle over the Jacobian variety of the Riemann surface, or the space of Aharonov-Bohm fluxes in physics terminology. Burban-Klevtsov have determined the rank and first Chern class of this bundle when $g=1$, and Klevtsov-Zvonkine computed the Chern character of this bundle in the single layer case for any genus. We generalize the latter approach to the multilayer case and compute the Chern character of the multilayer bundle for any genus, and any possible number of non-localized quasi-holes, under the assumption that the bilinear form associated to $K-I$ is non negative. The key tools are the Grothendieck-Riemann-Roch formula, Berezin integration and Wick's formula for exterior algebras. We show that when all quasi-holes are localized, the Chern character is compatible with the bundle being projectively flat. Furthermore, for those configurations, the conductance becomes independent of the genus and is equal to the sum of the coefficients of the inverse matrix $K^{-1}$, proving two conjectures by Keski-Vakkuri and Wen \cite{Keski-Vakkuri_Wen_1993}. We also find the relation linking $K$, the genus of the surface, the magnetic field and the number of non-localized quasi holes in each layer, which was studied under the name of "shift formula" in \cite{Fröhlich_Zee_1991,Wen_Zee_1992}. Finally, we study conditions on the matrix $K$ under which states having only localized quasi-holes maximize the total particle number, as well as  the asymptotics for large magnetic fields in this scenario.

\end{abstract}

	\tableofcontents

\section{Introduction}

\subsection{The quantum Hall effect}
Take a thin rectangular conductor plate and connect its edges two by two as in Fig. \ref{fig:sample_classical}. We get an object with two loops, say loop 1 and loop 2 on which we can measure voltages $V_1$ and $V_2$ and electric currents $I_1$ and $I_2$. Suppose that there is a constant magnetic field $B$ orthogonal to the plate. Electrons traveling around loop 1 will experience an additional force $-e v \times B$ with $v$ their velocity and $-e$ their electric charge, which will induce a current along loop 2 linear in $V_1$. This phenomenon is called the classical hall effect, and the factor of proportionality $R_H=\frac{V_{1}}{I_{2}}$ is called the transversal resistance. 

\begin{figure}[h]
    \centering
    \includegraphics[width=0.4\linewidth]{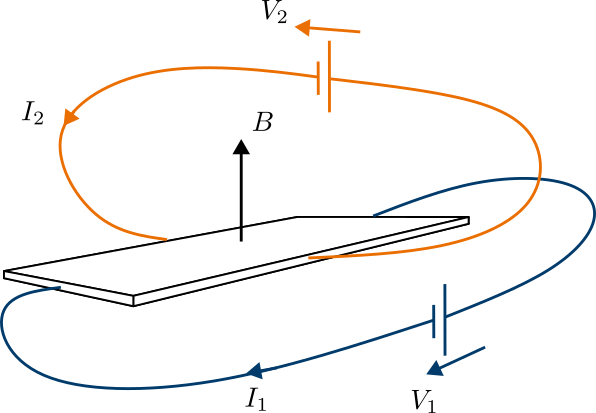}
    \caption{Example of experiment to observe the Hall effect} 
    \label{fig:sample_classical}
\end{figure}

Classical electrodynamics predicts $R_H$ to be a linear function of $B$. Yet, for certain two-dimensional electron systems under strong magnetic fields and low temperatures, $R_H$ exhibits plateaus on which the conductance $\sigma_H:=\frac{1}{R_H}$ is a rational number with a small denominator in units of $\frac{e^2}{h}$.

\begin{figure}[ht]
    \centering

\begin{minipage}{0.4\textwidth}
    \centering
    \includegraphics[width=\textwidth]{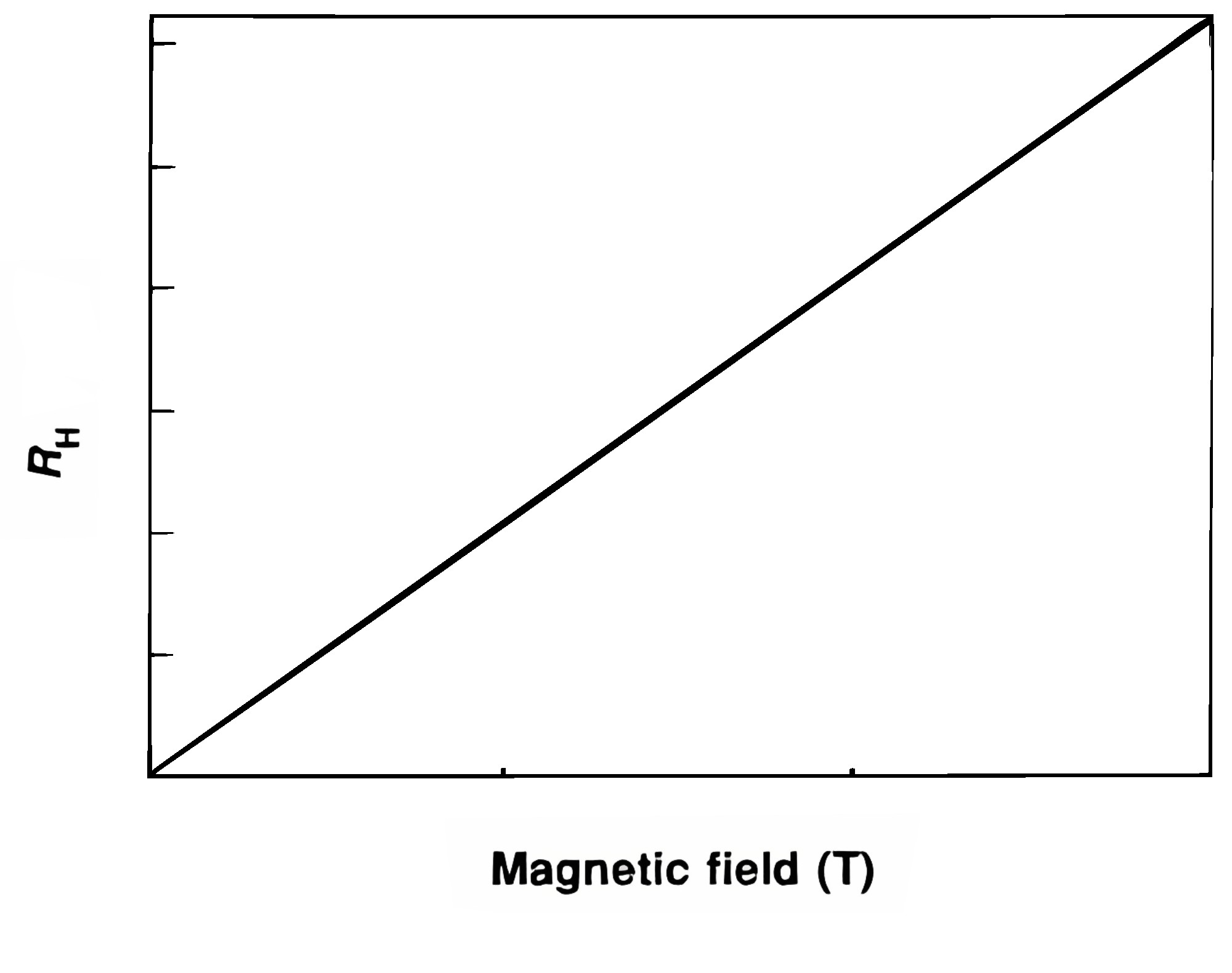}

\end{minipage}
\begin{minipage}{0.4\textwidth}
    \centering
    \includegraphics[width=\textwidth]{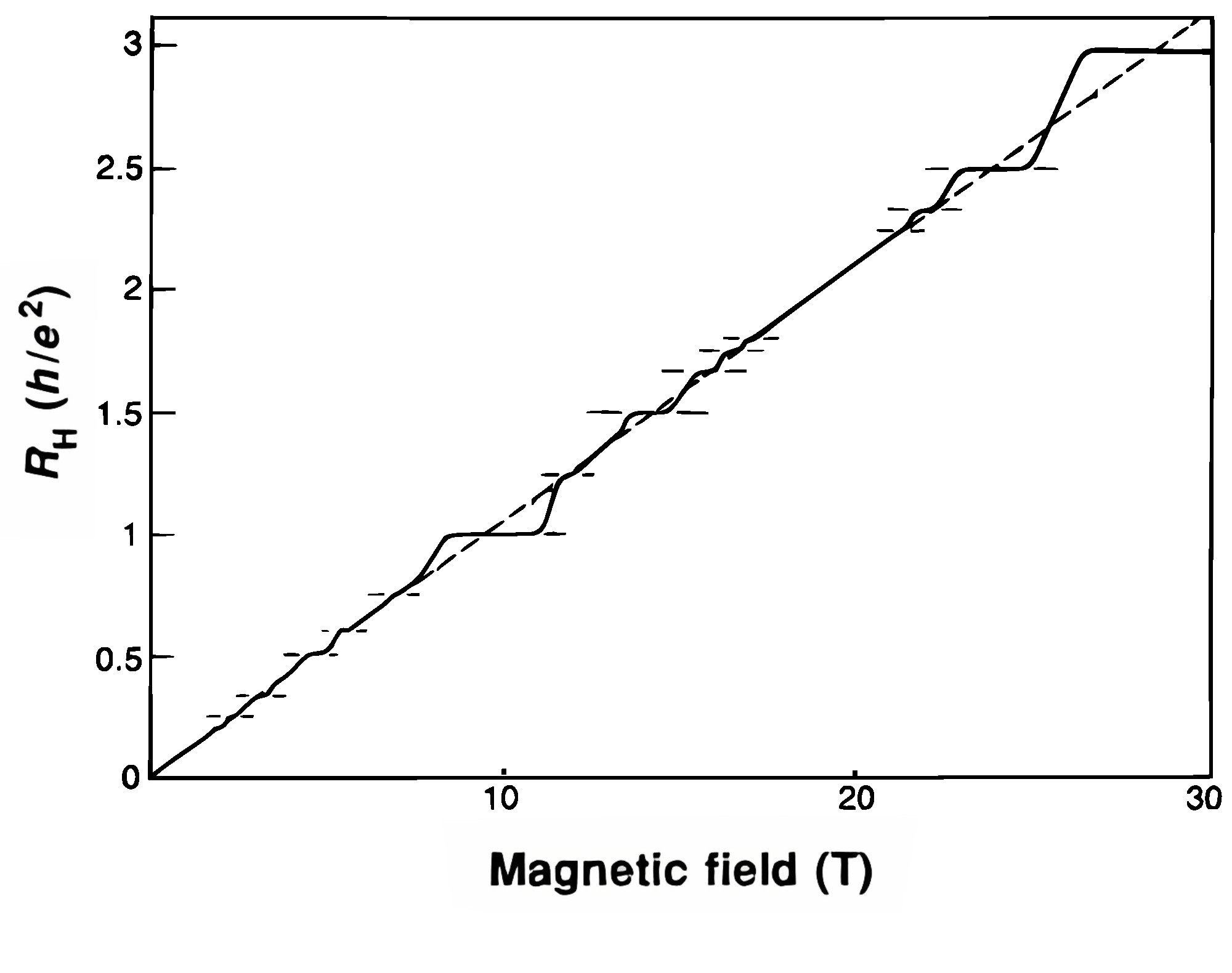}
\end{minipage}

  \caption{Transversal resistivity as predicted by classical mechanics, (left, illustrative image) and as observed on some systems under strong magnetic field (right, image from   \cite{Willett_Eisenstein_Störmer_Tsui_Gossard_English_1987})} 
  
  \label{fig:FQHE_experimental}
\end{figure}

When the experiment is run on a layered conductor such that electrons cannot change layers, we talk about multilayer quantum Hall effect. In such a setting, electrons on two different layer are distinguishable, while electrons on the same layer are not. \\

Quantum Hall effect was first modeled on the plane, but it is customary to consider it on other surfaces as well. This goes back to Laughlin \cite{Laughlin_1981} who considered it on a cylinder for his shift register argument to compute the conductance, Haldane \cite{Haldane_1983} who considered it on a sphere and Thouless, Kohmoto, Nightingale \cite{Thouless_Kohmoto_Nightingale_Den_Nijs_1982} as well as Haldane and Rezayi \cite{Haldane_Rezayi_1985} considered it on a torus. Wen, Niu \cite{Wen_Niu_1990} extended the computation of the degeneracy of the ground state to higher genus surfaces, Avron, Seiler and Zograf \cite{Avron_Seiler_Zograf_1994} generalized Laughlin's argument in the integer case where $g>1$. More recently  Klevtsov, Ma, Marinescu and Wiegmann \cite{Klevtsov_Ma_Marinescu_Wiegmann_2017} gave an asymptotic expansion for the partition function for the integer QHE, from which the conductance can be recovered and Klevtsov \cite{Klevtsov_2016,Klevtsov_2019} constructed an explicit basis of Laughlin states on higher genus surfaces.

In \cite{Klein_Seiler_1990} Klein and Seiler studied the quantum hall effect with interactions in genus $1$ and identified the conductance with the first Chern class divided by rank of a vector bundle. This is why, in this article, the conductance is identified with $\frac{c_1(V)}{\rk(V)}$ with $V$ the vector bundle of ground-states.

\subsection{Laughlin wavefunctions and their generalization to multilayer wavefunctions}

The study of the integer quantum Hall effect (when $\sigma$ is an integer in units of $\frac{e^2}{h}$) was carried out by Laughlin in \cite{Laughlin_1981} and corresponds to the case where the charged particles are non-interacting.

It is thought that the fractional quantum Hall effect is due to strong interactions among the charged particles. However, the analytical study of strongly coupled electron dynamics is too complicated, since it is not known how to solve the Schrödinger equation when the Hamiltonian presents coulomb interactions. In \cite{Laughlin_1983}, Laughlin focused on the case when $\sigma=\frac{1}{b}$ for $b$ odd and proposed an ansatz, or educated guess, for the ground state of an $N$-particle system over $\mathbb{C}$.

$$\psi_b(z_1,...,z_N)=\prod_{i<j}(z_i-z_j)^b \cdot e^{-\frac{1}{4}\sum_i|z_i|^2}$$ 

Halperin, and latter in more generality Keski-Vakkuri and Wen, proposed the following "Laughlin type" ansatz for the study of multilayer quantum Hall states over $\mathbb{C}$ \cite{Halperin_1983,Halperin_1984,Keski-Vakkuri_Wen_1993}:

\begin{equation}\label{eq:Halperin_wavefunctions}
    \psi_K(z^i_\lambda)=\prod_{i\neq  j,\lambda,\mu}(z_\lambda^i-z_\mu^j)^{K_{ij}}\prod_{i,\lambda<\mu}(z_\lambda^i-z_\mu^i)^{K_{ii}}\cdot e^{-\frac{1}{4}\sum_{\lambda, i}|z_\lambda^i|^2}
\end{equation}
where $K$ is an integer-valued matrix with non-negative coefficients that controls the coupling between layers, $i,j$ are labeling layers, and $\lambda,\mu$ are labeling particles among layers. Recently, these wavefunctions where conjectured to appear in multilayer graphene lattices, see \cite{Tarnopolsky_Kruchkov_Vishwanath_2019, Liu_Hao_Watanabe_Taniguchi_Halperin_Kim_2019}.\\

The multilayer torus case was first studied in \cite{Keski-Vakkuri_Wen_1993,Varnhagen_1995}. In the first paper, Keski-Vakkuri and Wen also made two conjectures about quantum Hall states on a Riemann surface of genus $g$. They conjectured that for maximally filled multilayer states, called incompressible multilayer states, the degeneracy of the lowest energy level is $\det(K)^g$. Their second conjecture is that the conductance of the system is independent of the genus and equal to $|K^{-1}|:=\sum_{ij} K^{-1}_{ij}$, the sum of the coefficients of the inverse matrix.\\

For the Laughlin, or single-layer states,  Klevtsov and Zvonkine  \cite{Klevtsov_Zvonkine_2025}  recently constructed a vector bundle $V$ over $\pic^dC$ whose fibers correspond to the vector spaces of Laughlin wavefunctions over a genus $g$ Riemann surface.  They obtained the rank of this bundle $V$ and all of its Chern classes by computing its Chern character.
In \cite{Burban_Klevtsov_2025} Klevtsov and Burban generalized the algebro-geometric constructive approach to the multilayer case on a torus. In \cite{Burban_Klevtsov_2024} they showed that the vector bundle constructed is projectively flat.\\

Our goal in this paper is to study wavefunctions that are the generalization of 
Halperin, Keski-Vakkuri and Wen's ansatz to the geometrical setting where electrons live on a multilayer compact Riemann surface of any genus. We apply the algebro-geometric approach pioneered by Klevtsov, Zvonkine and Burban to this case. As a result, we prove both conjectures formulated by Keski-Vakkuri and Wen about the degeneracy and conductance of the system in theorem \ref{thm:main}. We also obtain the relation between $K,d,g$, $\vec{n}$, which was studied in \cite{Fröhlich_Zee_1991,Wen_Zee_1992} ("shift formula").

\subsection{Geometric model}

\label{subsec:geometricmodel}

Let $C$ be a closed orientable genus $g$ surface with a Riemanian metric. In particular, $C$ has the structure of a complex algebraic curve. In this article, $C$ will represent the conductive surface on which the particles are constrained. On this surface, we assume the presence of a magnetic field everywhere orthogonal to $C$, and constant over time. We denote by $\gamma_1\dots,\gamma_{2g}$ the $2g$-cycles of $C$ up to homotopy. $C$ has no boundary on which we can impose voltage differences as in the case of Figure \ref{fig:sample_classical}, but we can induce an electric field circulation along $\gamma_i$ by varying over time the magnetic flux $\phi_i$ going through it. The magnetic flux time derivatives $\frac{d\phi_i}{dt}$ thus play the role of the voltages $V_i$. This hints at the interest of studying the space of possible magnetic field configuration evolutions on $C$, which we do in the section to come. 

\begin{figure}[h]
    \centering
    \includegraphics[width=\linewidth]{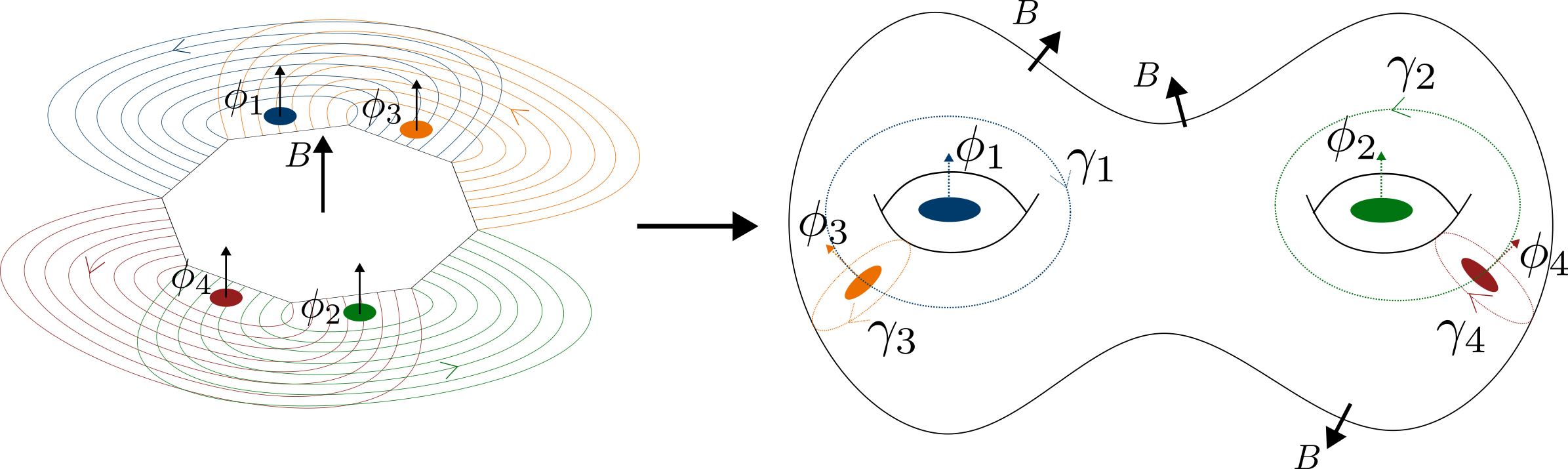}
    \caption{Illustration of a conductive sample in our model. By identifying the edges connected by cables of the same color, one obtains a genus two surface $C$. During the experiment, the magnetic field $B$ is kept fixed, while the magnetic fluxes $\phi_i$ going trough the cycles $\gamma_i$ of $C$ can change adiabatically trough time. They induce currents on $C$ which enable the study of the quantum Hall effect.}  
    \label{fig:conductor-as-Riemann-surface}
\end{figure}

Note that to measure an electric conductance in this setting, one needs to specify two homology classes: the class of one cycle along which the current is measured, and another through which the magnetic flux is varied along time. This is the intuitive reason why the conductance can be seen as an object which takes homotopy classes as an input and outputs a number, i.e. a cohomology class.

\paragraph{The space of magnetic field configurations on the surface $C$\\}

An electromagnetic field $(E,B)$ on the surface $C$ is the data of a principal $U(1)$-bundle $\mathbf{P}\to C\times \mathbb{R}$ equipped with a connection $\nabla$ up to gauge equivalence. A gauge equivalence is a continuous map $\phi:C\times \mathbb{R}\rightarrow U(1)$, it defines a fiber bundle automophism of $\mathbf{P}$ and transforms the connection by addition of a 1-form $\phi^{-1}d\phi$ over $C\times \mathbb{R}$. Note that the degree $d=\deg(P_{|C\times \lbrace t \rbrace})$ is independent of time $t$. We can construct a local trivialization of $\mathbf{P}$ from a local trivialization of $P=\mathbf{P}_{|C\times \lbrace 0 \rbrace }$ by using the parallel transport along the curves $\lbrace z \rbrace \times \mathbb{R}$. In those trivializations, $\nabla=d+\omega_t$ where $\omega_t$ is a time dependent 1-form on $C$. The electric field $E$ and the magnetic field $B$ can be recovered from the connection by

$$E=\frac{\partial \omega_t}{\partial t} \qquad \qquad B = \star d \omega_t $$
where $\star$ is the hodge star operator.

We can now identify $\mathbf{P}\simeq P\times \mathbb{R}$ and $\nabla$ with a collection $\nabla_t$ of connections on $P$.  Given any $\nabla_{t_1}$ and $\nabla_{t_2}$, their difference is a 1-form corresponding to $\int_{t_1}^{t_2}E \ dt$. The gauge transformations compatible with the identification $\mathbf{P}\simeq P\times \mathbb{R}$ (where connections on $\mathbf{P}$ don't have a $dt$ term) are constant along $\mathbb{R}$, hence defined by continuous functions $\phi:C \to U(1)$. We would like to consider the space of magnetic field configurations over $C$. Paths inside this space correspond to time evolutions of the system.

First note that, as stated before, we assume the magnetic field $B$ to be constant over time on the surface. This means all connections $\nabla_t$ have the same curvature. We also take $E$ to be divergence free, which amounts to considering the surface is a good electric conductor without charge accumulation. Together, those two hypothesis imply that $E=\frac{\partial \omega_t}{\partial t}$ is both closed and divergence free, hence an harmonic 1-form. Since the space of harmonic 1-forms is canonically isomorphic to $H^1(C,\mathbb{R})$, the space of connections with fixed curvature up to gauge transformation $\phi: C \to \mathbb{R} / \mathbb{Z}$ that can be lifted to $\mathbb{R}$ is an affine space $W$ with underlying vector space $H^1(C,\mathbb{R})$. This is not yet the parameter space we were looking for, as we have only formed equivalence classes for gauge transformations that can be lifted to $\phi: C \to \mathbb{R}$. Indeed, for such a type of gauge transformation we have $\phi^{-1} d\phi=0$ in $H^1(C,\mathbb{R})$, \\

Any gauge equivalence $\phi:C \to U(1)$ between $(P,\nabla')$ and $(P,\nabla)$ relates both connections through $\nabla'=\nabla+\phi^{-1} d\phi$. This $\phi^{-1} d\phi$ is a 1-form that corresponds to an integer cohomology class. Indeed, locally $\phi^{-1} d\phi=d \log \phi$ thus integrating this form on a cycle $\gamma$ gives the index of the loop $\phi \circ \gamma$. Conversely, given an integer cohomology class $\alpha$, we can construct a gauge transformation $\phi(z)=\exp(2\pi i\int_{z_0}^z \alpha )$ that is well defined precisely because integrating $\alpha$ along loops gives an integer number.  Therefore, the space of magnetic field configurations is $W$ quotiented by the action of $H^1(C,\mathbb{Z})$.

\paragraph{Quantum mechanics of a single particles on $C$ and the correspondence between magnetic field configurations and holomorphic line bundles.\\}

A wavefunction for a single charged particle evolving on $C$ in the magnetic field defined by $P$ is a smooth section of the complex line bundle $L=P \times_{U(1)} \mathbb{C}$. $L$ is equipped with the connection from $P$ and a Hermitian metric inherited from $\mathbb{C}$. The Hamiltonian can be written $$H=-\nabla^\dag \nabla +a B + b R$$ where $R$ is the curvature of the surface $C$ and $a$, $b$ are material dependant constants. 

$H$ has a discrete spectrum bounded below as a self adjoint elliptic operator on a compact surface. Under the condition that $(a-1) B + b R$ is constant over $C$, the lowest eigenspace can be written as the space of holomorphic sections of $L$ for some particular holomorphic structure obtained from the connection through the canonical isomorphism $W/ H^1(C,\mathbb{Z}) \overset{\sim}{\to}  \pic^d(C)$.\\

To see how this map is constructed, first recall that since $C$ has a complex structure, the connection $\nabla$ splits as a sum of a holomorphic and antiholomorphic component: $\nabla=\nabla^{1,0} + \nabla^{0,1}$. The Bochner-Kodaira -Nakano relation $\nabla^\dag \nabla=2 {\nabla^{0,1}}^\dag \nabla^{0,1} + B$ gives 

$$H=-2 {\nabla^{0,1}}^\dag \nabla^{0,1}  +(a-1) B + b R$$

The holomorphic structure on $L$ is then defined by $\nabla^{0,1}=\ov{\partial}_L$ where the right hand side is the Dolbeault operator associated with the holomorphic structure on $L$. For this holomorphic structure, the lowest eigenspace of $H$ is $\ker{\ov{\partial}_L}=H^0(C,L)$.

\paragraph{Multilayer wavefunctions as sections of a line bundle\\}

Consider a conductor with $k$ layers indexed by $i$, each of them identified with the compact Riemann surface $C$. We assume that charged particles in the conductor are confined to their departing layer and cannot switch to another one. We want to generalize the wavefunctions  (\ref{eq:Halperin_wavefunctions}) to this geometrical setting. 

\begin{definition}
    Given $k\in \mathbb{N}_{>0}$, we call a \textit{configuration of a $k$-layered fractional quantum Hall system} the data of a quadruple $(K,g,d,\vec{n})$ where:
    \begin{itemize}
        \item[] $K\in\mathcal{M}^k(\mathbb{N})$ is a symmetric matrix whose coefficients give the couplings between the $k$ layers. 
        \item[] $g\in \mathbb{N}$ is the genus of the surface $C$.
        \item[] $d\in \mathbb{N}$ is the flux of the magnetic field going trough $C$.
        \item[] $\vec{n}\in \mathbb{N}^k$ is a column vector such that $n_i>2g-1 \quad \forall i$. It specifies the number of particles in each layer.  
    \end{itemize}

\end{definition}

\begin{definition}
    Given a configuration of a $k$-layered fractional quantum Hall system $(K,g,d,\vec{n})$, we can associate a vector
    $$\vec{p}=\vec{d}-K\vec{n}-(g-1)\vec{K}$$
    where $\vec{d}$ denotes the column vector of size $k$ whose entries are all $d$ and $\vec{K}=(K_{11}=,\dots,K_{kk})^T$ the column vector whose entries are the diagonal entries of $K$. Each entry $p_i$ of $\vec{p}$ is called the \textit{number of non-localized quasi-holes} in layer $i$. We will give an insight into this name choice later.
\end{definition}

Let $(K,g,d,\vec{n})$ be a configuration of a $k$-layered fractional quantum Hall system and $L\rightarrow C$ a degree $d$ holomorphic line bundle ($L$ encodes a particular magnetic field configuration). Denote $N=\sum n_i$ the total number of charged particles. We can build a line bundle over the cartesian product $C^N$ by $L^{\boxtimes N}=\bigotimes_j \pi_j^*L$, where $\pi_j:C^N\rightarrow C$ denotes the projection over the $j$-th factor. We will call multilayer quantum Hall states associated to $(K,g,d,\vec{n})$ the sections of $L^{\boxtimes N}$ that verify the same hypothesis Laughlin and his followers used to build their ansatzs for the ground states of the system:

\begin{itemize}
    \item Their restriction to each copy of $C$ gives a holomorphic section of $L$.
    \item They are symmetric (resp. antisymetric) under particle exchange of two particles inside the same layer $i$ when $K_{ii}$ is even (resp. odd). 
    \item They vanish at order $K_{ij}$ when two particles in layers $i$ and $j$ respectively are in the same position.
\end{itemize}
Note that these hypothesis ensure that what we have called multilayer wavefunctions behave locally as holomorphic functions having a factor $\prod_{i\neq  j,\lambda,\mu}(z_\lambda^i-z_\mu^j)^{K_{ij}}\prod_{i,\lambda<\mu}(z_\lambda^i-z_\mu^i)^{K_{ii}}$. \\

Now, let $\Delta_K$ represent the divisor on $C^{{\scriptscriptstyle \sum_i} n_i}$ defined as

$$\Delta_K= \sum_{i\leq j}K_{ij}\Delta_{ij}$$
where $\Delta_{ij}:=\bigcup\{z^i_\lambda=z^j_\mu\}$ for $\lambda=1...n_i$ and $\mu=1...n_j$. Note that if $i=j$, we additionally ask $\lambda< \mu$ since otherwise $\Delta_{ii}$ would be the whole layer $n_i$.\\

Holomorphic sections of $L^{\boxtimes N}(-\Delta_K):=L^{\boxtimes N} \otimes O(-\Delta_K)$ correspond to holomorphic sections of $L^{\boxtimes N}$ that vanish on $\Delta_{ij}$ to the order $K_{ij}$. If we quotient $C^{n_i}$ (the $n_i$ copies of $C$ corresponding to particles in layer $i$) by the symmetric group $S_{n_i}$ for all $i$, we can build a line bundle $L_K$ over $\prod_i S^{n_i} C$ whose pullback on $C^N$ is exactly $L^{\boxtimes N}(-\Delta_K)$. Any holomorphic section of $L_K$ corresponds to a multilayer wavefunctions (for the magnetic field configuration given by $L$). \\

\paragraph{Construction of the Multilayer wavefunction bundle $V_{K,d,g,\vec{n}}$ over $\pic^d(C)$\\}

Recall that $\pic^dC$ is the space of magnetic field configurations over $C$. We now turn to the problem of building a sheaf over $\pic^d(C)$ whose stalks are exactly the vector spaces of multilayer wavefunctions for the given holomorphic structure on $L$. We will later prove that this sheaf is a vector bundle and thus call it the Multilayer wavefunction bundle, denoted by $V_{K,d,g,\vec{n}}$. This is the vector bundle whose first Chern class divided by rank is identified with the electric conductance.\\

To construct this sheaf, we will construct a universal line bundle above $\prod_i S^{n_i}C \times \pic^d(C)$ and push it forward on $\pic^d(C)$. First, recall there exist a line bundle $P_0$ on $C \times \pic^0(C)$ such that $$\forall \xi \in \pic^0(C) \quad {P_0}_{|C\times \lbrace \xi \rbrace} \simeq \xi.$$

Such a line bundle is called a Poincaré line bundle because it comes from a Poincaré line bundle on $\pic^0(C) \times \hat{\pic^0}(C)$ via pullback by the Abel-Jacobi embedding and the identification of $\pic^0(C)$ with its dual using the canonical polarization of $\pic^0(C)$. Now fix a point $z_0 \in C$. The condition that ${P_0}_{| \lbrace z_0 \rbrace \times \pic^0(C)}$ be trivial fixes $P_0$ uniquely. Note that changing the choice of point $z_0\in C$ here results in a Poincaré bundle that only differs from $P_0$ by a degree $0$ line bundle over $\pic^0(C)$. Now, the choice of $z_0$ also allows us to identify $\pic^d(C)$ with $\pic^0(C)$, where the isomorphism $u:\pic^d(C) \to \pic^0(C)$ is given by multiplication by $O(-d z_0)$. This choice of point thus gives a universal line bundle of degree $d$ line bundles $$P_d=p_1^\star O(d z_0) \otimes u^\star P_0$$ with $p_1:C\times \pic^d(C) \to C$.

We now do the same construction as previously, replacing the line bundle $L\to C$ by $P_d \to C \times \pic^d(C)$. Instead of obtaining $L_K \to \prod_i S^{n_i} C $, we obtain a line bundle $\mathcal{L}_K$ over $\prod_i S^{n_i} C \times \pic^d(C)$ whose restriction to $\prod_i S^{n_i} C \times \{[L]\}$ for any given degree $d$ line bundle $L$ over $C$ is isomorphic to $L_K$. \\

Finally, let $p:\prod_i S^{n_i}C \times \pic^d(C)\to \pic^d(C)$ be the projection onto the second factor. We will call \textit{Multilayer wavefunction  bundle} the direct image $p_*\mathcal{L}_K$.
The fibers above $L$ of this bundle correspond as desired to $H^0\left(\prod S^{n_i}C, L_K\right)$, the vector spaces of holomorphic sections of $L_K$.

\subsection{Main result}

In the statement that follows, we let $V_{K,g,d,\vec{n}}=p_\star \mathcal{L}_K$ where $\mathcal{L}_K$ is the universal line bundle corresponding the the configuration $(K,g,d,\vec{n})$. We also denote by $\theta$ the class in $H^2(X,\mathbb{Z})$ of a theta divisor on $\pic^d(C)$.

\begin{theorem}
\label{thm:main}
     Let $(K,d,g,\vec{n}_0)$ be a configuration such that $K-I$ corresponds to a non-negative bilinear form and $$ K\vec{n}_0=\vec{d}-(g-1) \vec{K}.$$ 
     The Chern character of $V_{K,g,d,\vec{n}_0}$ is $$\ch(V_{K,g,d,\vec{n}_0})=\det(K)^g e^{-|K^{-1}| \theta}$$
    
    where $|K^{-1}|$ is the sum of the coefficients of $K^{-1}$.
\end{theorem}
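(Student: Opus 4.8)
The plan is to compute $\ch(V_{K,g,d,\vec{n}_0})$, with $V_{K,g,d,\vec{n}_0}=p_\star\mathcal{L}_K$, via the Grothendieck--Riemann--Roch formula, thereby reducing the whole problem to a single integration over the fibres $\prod_i S^{n_i}C$ of $p$.

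First I would verify that the derived pushforward collapses onto the ordinary one, i.e. that $R^q p_\star \mathcal{L}_K = 0$ for all $q>0$. This is where the hypothesis $n_i>2g-1$ is used: restricting $\mathcal{L}_K$ to a fibre over $[L]\in\pic^d(C)$ yields the line bundle $L_K$ on $\prod_i S^{n_i}C$, and in this range of $n_i$ each $S^{n_i}C$ is a projective bundle over the Jacobian on which the relevant line bundle has no higher cohomology; a K\"unneth argument across the $k$ layers then kills $H^{>0}$ of the whole product. Hence $V_{K,g,d,\vec{n}_0}=p_!\mathcal{L}_K$ in K-theory and GRR gives
\[
\ch(V_{K,g,d,\vec{n}_0}) = p_\star\!\big(\ch(\mathcal{L}_K)\,\td(T_p)\big),
\]
where the relative tangent bundle $T_p$ is the tangent bundle of the fibre, so that $\td(T_p)=\prod_i \td(TS^{n_i}C)$.

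I would then make the two factors of the integrand explicit in cohomology. Using Macdonald's presentation of $H^\star(S^nC)$, each layer contributes a degree-two class $\eta_i$, the $2g$ odd generators $\xi_i^{(a)}$ pulled back from $H^1(C)$, and the Macdonald relations tying odd products to $\eta_i$; the base $\pic^d(C)$ contributes the theta class $\theta$. Tracing the construction of $\mathcal{L}_K$ from $P_d^{\boxtimes N}(-\Delta_K)$ descended to the symmetric products, I would write $c_1(\mathcal{L}_K)$ as an explicit combination of the $\eta_i$, the quadratic odd terms $\xi_i^{(a)}\xi_j^{(b)}$ weighted by $K_{ij}$, and $\theta$; the localization hypothesis $K\vec{n}_0=\vec{d}-(g-1)\vec{K}$ (equivalently $\vec{p}=0$) is exactly what makes this combination clean and removes the terms that would otherwise obstruct the resummation. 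Macdonald's formula likewise gives each $\td(TS^{n_i}C)$ in closed form, and the integrand becomes $\exp\!\big(c_1(\mathcal{L}_K)\big)\prod_i \td(TS^{n_i}C)$.

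The heart of the argument, and the step I expect to be the main obstacle, is the fibre integration $p_\star$. The odd generators $\xi_i^{(a)}$ anticommute, so integrating against the fundamental class of $\prod_i S^{n_i}C$ is a Berezin integral over Grassmann-like variables; because $c_1(\mathcal{L}_K)$ carries the quadratic coupling $\sum_{i,j}K_{ij}(\cdots)$ in these odd classes, the integral is Gaussian and is evaluated by Wick's formula for exterior algebras. Organised along the $g$ symplectic pairs of $H^1(C)$, each pair produces a $k\times k$ determinant $\det(K)$, and the $g$ pairs together give the rank $\det(K)^g$. Integrating out the odd sector replaces the coupling $K$ by $K^{-1}$ in the residual even sector; the all-ones structure arising from summing the per-layer $\theta$-contributions contracts $K^{-1}$ against the all-ones vector on both sides, and the remaining even classes resum into an exponential, leaving $e^{-(\sum_{i,j}(K^{-1})_{ij})\theta}=e^{-|K^{-1}|\theta}$ --- precisely the form forced by projective flatness. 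Multiplying the two sectors yields $\det(K)^g e^{-|K^{-1}|\theta}$, and the non-negativity of the bilinear form attached to $K-I$ is what ensures the cohomological-vanishing and projective-bundle inputs of the first step hold uniformly over $\pic^d(C)$.
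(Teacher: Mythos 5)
Your overall strategy is exactly the paper's: GRR for $p$, vanishing of the higher direct images so that $\ch(\sum(-1)^qR^qp_\star\mathcal{L}_K)=\ch(p_\star\mathcal{L}_K)$, an explicit expression for $c_1(\mathcal{L}_K)$ and for the Todd class of $\prod_iS^{n_i}C$ in terms of the $\theta$-, $\xi$- and mixed classes, and then a fibre integration organised as a Gaussian Berezin integral over the odd generators, evaluated by Wick's formula for exterior algebras so that each of the $g$ symplectic pairs contributes $\det(K)$ and the residual even sector contracts $K^{-1}$ against the all-ones vector to give $e^{-|K^{-1}|\theta}$. That final computation, and the observation that $\vec{p}=0$ is what trivialises the intermediate polynomial factors, match the paper's Proposition \ref{prop:tomainthm}.

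There is, however, a genuine gap in your vanishing step. You propose to kill $H^{>0}$ of the fibre by combining the projective-bundle structure of each $S^{n_i}C$ over its Jacobian with ``a K\"unneth argument across the $k$ layers.'' This cannot work as stated: when $K$ has nonzero off-diagonal entries, the restriction $L_K$ of $\mathcal{L}_K$ to a fibre is \emph{not} an external tensor product of line bundles on the factors $S^{n_i}C$ --- the twists by the incidence divisors $\Delta_{ij}$ with $i\neq j$ (equivalently the mixed classes $K_{ij}\eta_{ij}$ in $c_1(L_K)$) couple the layers, so K\"unneth does not decompose $H^q(\prod_iS^{n_i}C,L_K)$ into layerwise cohomologies. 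Moreover, even in a single layer the projective-bundle structure alone only controls the cohomology along the $\mathbb{P}^{n_i-g}$ fibres; one still needs positivity over the abelian base. The paper's actual argument (Proposition \ref{prop:kodairavanishing}) is to apply the Kodaira vanishing theorem to the whole product: one writes $c_1(L_K\otimes\omega_X^{-1})$ as $\sum_i(p_i+n_i+1-g)\xi_i$ (ample, since $p_i=0$ and $n_i>2g-1$) plus $\sum_i(K-I)_{ii}\theta_i+\sum_{i<j}(K-I)_{ij}\eta_{ij}$, and the nontrivial input is that this second class is nef whenever $K-I$ is non-negative --- proved by representing it, via the period matrix $\tau$, as $i\sum(A\otimes B)_{ef}\,\mu_e\wedge\ov{\mu}_f$ with $A=K-I\geq0$ and $B=-i(\tau-\ov\tau)^{-1}>0$ (Lemma \ref{lem:condition-K-diviseur-neff}). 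You correctly sense that $K-I\geq0$ is ``what ensures the cohomological vanishing,'' but you do not supply the mechanism by which it does so, and the mechanism you do supply fails precisely in the multilayer situation the theorem is about.
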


\begin{corollary}
Let $(K,d,g,\vec{n}_0)$ be a configuration as in theorem \ref{thm:main}. The degeneracy of the associated ground state of the system and its conductance are respectively $$\rk(V_{K,g,d,\vec{n}_0})=\ch_0(V_{K,g,d,\vec{n}_0})=\det(K)^g$$ and $$\sigma=\frac{\ch_1(V_{K,g,d,\vec{n}})}{\ch_0(V_{K,g,d,\vec{n}})}=-|K^{-1}|\theta $$
\end{corollary}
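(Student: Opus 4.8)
The plan is to compute the Chern character of $V_{K,g,d,\vec{n}_0}=p_\star\mathcal{L}_K$ via the Grothendieck--Riemann--Roch formula applied to the projection $p\colon\prod_i S^{n_i}C\times\pic^d(C)\to\pic^d(C)$. Since $p$ is proper and the higher direct images vanish under the hypothesis $n_i>2g-1$ (so $p_\star\mathcal{L}_K$ is genuinely a vector bundle and equals $Rp_\star\mathcal{L}_K$ in K-theory), GRR gives
\begin{equation*}
\ch(V_{K,g,d,\vec{n}_0})=p_\star\bigl(\ch(\mathcal{L}_K)\,\td(T_p)\bigr),
\end{equation*}
where $T_p$ is the relative tangent bundle of the fibration, which is pulled back from $\prod_i S^{n_i}C$. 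So the whole problem reduces to computing $\ch(\mathcal{L}_K)$ on the symmetric-product times Jacobian and integrating over the fibers. First I would assemble the cohomology of $\prod_i S^{n_i}C$, recalling the standard description of $H^\star(S^nC)$ in terms of the class $\eta$ (the point class / hyperplane section of the symmetric power) and the classes coming from $H^1(C)$, together with the fact that the theta class $\theta$ on $\pic^d(C)$ and the symplectic pairing on $H^1(C)$ govern all mixed terms.

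The key computational step is to express $c_1(\mathcal{L}_K)$ explicitly. Because $\mathcal{L}_K$ is built from the Poincaré bundle $P_d$ by the tensor/symmetrization construction twisted by $-\Delta_K=-\sum_{i\le j}K_{ij}\Delta_{ij}$, its first Chern class splits into three kinds of contributions: a diagonal piece from the $\eta_i$ classes on each factor $S^{n_i}C$ (governed by the diagonal self-intersection of the symmetric product, which is where the Todd class and the $K_{ij}$ couplings interact), a piece coming from $p_1^\star O(dz_0)\otimes u^\star P_0$ that produces the $\theta$-class together with the mixed $H^1$ classes, and the cross-layer coupling terms weighted by the off-diagonal $K_{ij}$. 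Once $c_1(\mathcal{L}_K)$ is written as $\alpha+\beta$ with $\alpha$ the $\theta$-and-$H^1$ part and $\beta$ the fiber-direction part, one exponentiates $\ch(\mathcal{L}_K)=e^{c_1(\mathcal{L}_K)}$, multiplies by $\td(T_p)$, and integrates along the fiber. The integration over $\prod_i S^{n_i}C$ of the top-degree fiber components is exactly where the Klevtsov--Zvonkine technology (Berezin integration and Wick's formula for exterior algebras) enters: the odd $H^1(C)$ generators are treated as Grassmann variables, and Wick contraction converts the Gaussian-type exponential into the determinant $\det(K)^g$ and the exponential $e^{-|K^{-1}|\theta}$.

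In more detail, after integrating out the $\eta_i$ and the relative-tangent contributions one is left with a Gaussian integral over the Grassmann-odd variables of $\bigoplus_i H^1(C)^{\oplus}$ whose quadratic form has block structure dictated by $K$ (the couplings $K_{ij}$ appear as the entries of the covariance matrix). Wick's theorem / the Berezin-integral evaluation of such a Gaussian yields a Pfaffian that, because the form is built symmetrically from $K$ on the symplectic $H^1(C)$ with its $g$ hyperbolic pairs, collapses to a power of $\det K$ — specifically $\det(K)^g$ for the rank term — while the surviving $\theta$-dependence reorganizes into $e^{-|K^{-1}|\theta}$, the sum-of-coefficients $|K^{-1}|=\sum_{ij}K^{-1}_{ij}$ arising precisely because the constant ``all-ones'' flux direction $\vec{d}$ pairs with $K^{-1}$. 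The hypothesis $K\vec{n}_0=\vec{d}-(g-1)\vec K$ is what makes all fiber-direction degrees match so that only these two structures survive and no intermediate Chern-character components obstruct the clean factorization.

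I expect the main obstacle to be the Berezin/Wick evaluation of the Grassmann Gaussian with the $K$-coupled block structure: keeping track of signs, of the correct normalization of the $\theta$-class versus the symplectic basis of $H^1(C)$, and of how the off-diagonal $K_{ij}$ couplings intertwine the layers' exterior algebras so that the final answer factors as $\det(K)^g e^{-|K^{-1}|\theta}$ rather than some less symmetric expression. The non-negativity of the bilinear form of $K-I$ should be precisely the condition guaranteeing the relevant higher cohomology vanishing (hence that GRR computes an honest bundle) and the convergence/well-definedness of the Gaussian contraction, so I would verify at the outset that this hypothesis feeds correctly into the vanishing theorem before trusting the fiber integration.
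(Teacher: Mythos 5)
Your proposal is correct and follows the paper's own route: the corollary is an immediate consequence of Theorem \ref{thm:main} (expand $\det(K)^g e^{-|K^{-1}|\theta}$ and read off the degree-$0$ and degree-$2$ components, so $\ch_0=\det(K)^g$ and $\ch_1/\ch_0=-|K^{-1}|\theta$), and your sketch of that theorem's proof --- GRR for $p$, expressing $c_1(\mathcal{L}_K)$ in the Neron--Severi classes of $\prod_i S^{n_i}C\times\pic^d(C)$, then fiber integration via Berezin/Wick yielding $\det(K)^g e^{-|K^{-1}|\theta}$ --- is exactly the paper's strategy. The only slip is attributing the vanishing of the higher direct images to $n_i>2g-1$ rather than to the Kodaira-vanishing argument that uses $K-I\geq 0$ and $p_i>-(n_i+1-g)$, but you correct this yourself in your final paragraph.
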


Theorem \ref{thm:main}
 proves Keski-Vakkuri and Wen's conjecture about the ground state 
degeneracy and conductance of a multilayer quantum system. This is also compatible with the vector bundle $V$ being projectively flat in the sense that it passes the test proposed in \cite{geometric_test}.\\

In this article, we also compute the Chern character $\ch\left(V_{K,g,d,\vec{n}}\right)$ for any $\vec{n}$ (see theorem \ref{thm:multilayer_Chern_character}). This case corresponds to not completely filled states, also called states with non-localized quasi-holes or compressible 
states. We compute their associated degeneracy and conductance.\\

\begin{remark}
From a physics perspective, given a setup like the one of Fig. \ref{fig:conductor-as-Riemann-surface}, a representative of the class $\theta$ is the two form $\sum_{i=1}^g d\phi_i \wedge d\phi_{i+g}$ where $\phi_i$ is the flux going trough the cycle $\gamma_i$ and $g=2$, and thus $\sigma=-|K^{-1}| \sum_i d\phi_i \wedge d\phi_{i+g}$. See \cite{Avron_Seiler_Zograf_1994} for more details on the description of the conductivity as a differential form in the case of the integer quantum Hall effect on a genus $g$ surface.
\end{remark}

\begin{theorem}
\label{thm:maximalparticules}

 Let $(K,d,g,\vec{n})$ be a configuration such that the vector $\vec{p}=\vec{d}-K\vec{n}-(g-1) \vec{K}$ has at least one negative component. Then $\rk(V_{K,g,d,\vec{n}})=0$.

 In particular, suppose the configuration $(K,d,g,\vec{n_0})$ is such that $K-I$ is non negative and $K\vec{n}_0=\vec{d}-(g-1) \vec{K}.$ Then this $\vec{n_0}$ has the property that
 $$\rk(V_{K,g,d,\vec{n}_0})>0$$ and 
$$\rk(V_{K,g,d,\vec{n}})=0$$
for any $\vec{n}>\vec{n}_0$ in the sense that $\vec{n}-\vec{n}_0$ has non negative components and at least one positive component.

In the case where $C_i:=\sum_{j} K_{ji}^{-1} \geq 0$ for any $i$, $\vec{n}_0$ also maximizes the total particle number $N=\sum_i (n_0)_i$. 
    
\end{theorem}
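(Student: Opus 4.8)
The plan is to handle the three assertions in turn, concentrating the geometric work in the first (the vanishing criterion $p_i<0\Rightarrow \rk=0$) and reducing the other two to linear algebra once it is established. Since $\rk(V_{K,g,d,\vec n})$ equals the dimension of the generic fibre $H^0\!\left(\prod_i S^{n_i}C,\,L_K\right)$, it suffices to show this space is zero when some component of $\vec p=\vec d-K\vec n-(g-1)\vec K$ is negative. Fix an index $i$ with $p_i<0$, choose generic positions for all particles in the layers $j\neq i$, and restrict $L_K$ to the slice $\Sigma=S^{n_i}C\times\{\text{fixed points}\}$; call the resulting line bundle $M_i$. Its pullback to $C^{n_i}$ is $L^{\boxtimes n_i}$ twisted down by the diagonal vanishing of order $K_{ii}$ and by the order-$K_{ij}$ vanishing at each fixed point of layer $j$. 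I would then restrict $M_i$ along the Abel--Jacobi map $a\colon S^{n_i}C\to\pic^{n_i}C$, whose fibres are the linear systems $\lvert D\rvert\cong\mathbb P^{\,n_i-g}$ (using $n_i>2g-1$). On such a fibre the point-divisor class restricts to $\mathcal O(1)$, the symmetric power $L^{(n_i)}$ restricts to $\mathcal O(d)$, and the order-$K_{ii}$ diagonal vanishing contributes $-K_{ii}(n_i+g-1)$ (half of the $2n_i+2g-2$ branch points of a general pencil, by Riemann--Hurwitz). Assembling these I expect
\[
M_i\big|_{\mathbb P^{\,n_i-g}} \;\cong\; \mathcal O\!\left(d - K_{ii}(n_i+g-1) - \sum_{j\neq i}K_{ij}n_j\right) \;=\; \mathcal O(p_i),
\]
the final equality being the definition of $p_i$ after substituting $\sum_{j\neq i}K_{ij}n_j=(K\vec n)_i-K_{ii}n_i$. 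When $p_i<0$ this sheaf has no sections, so $a_\star M_i=0$ and hence $H^0(\Sigma,M_i)=0$; since the slices $\Sigma$ sweep out $\prod_i S^{n_i}C$ as the fixed points vary, every global section of $L_K$ restricts to zero on each of them and therefore vanishes identically.

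The main obstacle is the fibrewise degree computation, specifically pinning down $L^{(n_i)}\big|_{\text{fibre}}=\mathcal O(d)$. The naive identity $\det f_\star L$ for a base-point-free pencil $f\colon C\to\mathbb P^1$ is off by the ramification contribution (it would give degree $d+1-g-n_i$ rather than $d$), so I would instead compute the restriction through the universal divisor $\mathcal D\subset\lvert D\rvert\times C$ with $\mathcal O(\mathcal D)\cong\mathcal O(1)\boxtimes\mathcal O(D)$, carefully accounting for non-reduced divisors. A clean cross-check calibrates the constant: in the single-layer case with all $p_i=0$ the fibre degree must vanish, so that $a_\star L_K$ is a genuine theta-power bundle on the Jacobian and $H^0$ recovers $\rk=\det(K)^g$ in agreement with Theorem \ref{thm:main}; any other value of $\deg L^{(n_i)}\big|_{\text{fibre}}$ would be incompatible with this. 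This is the step where I expect the real care to be needed.

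For the second assertion, the hypothesis $K\vec n_0=\vec d-(g-1)\vec K$ makes every $p_i$ vanish, so Theorem \ref{thm:main} gives $\rk(V_{K,g,d,\vec n_0})=\det(K)^g$, which is positive because $K-I\succeq 0$ forces $K\succeq I\succ 0$, hence $\det K>0$. For $\vec n>\vec n_0$ write $\vec m=\vec n-\vec n_0\geq 0$, $\vec m\neq 0$; then $\vec p(\vec n)=\vec p(\vec n_0)-K\vec m=-K\vec m$, and positive-definiteness yields $\vec m^{\,T}K\vec m>0$. Since all $m_i\geq 0$, some term $m_i(K\vec m)_i$ is strictly positive, so $(K\vec m)_i>0$, i.e.\ $p_i(\vec n)<0$, and the first assertion gives $\rk=0$.

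For the last assertion, the first assertion shows that $\rk(V_{K,g,d,\vec n})>0$ forces $\vec p(\vec n)\geq 0$, that is $K(\vec n-\vec n_0)\leq 0$ componentwise. Setting $\vec q=-K(\vec n-\vec n_0)\geq 0$, so that $\vec n-\vec n_0=-K^{-1}\vec q$, I obtain
\[
N(\vec n)-N(\vec n_0)=\vec 1^{\,T}(\vec n-\vec n_0)=-\,\vec 1^{\,T}K^{-1}\vec q=-\sum_i C_i\,q_i\;\leq\;0,
\]
because each $C_i=\sum_j K^{-1}_{ji}\geq 0$ and each $q_i\geq 0$. Hence no admissible configuration carries more particles than $\vec n_0$, which completes the argument.
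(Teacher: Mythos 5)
Your proposal is correct and follows essentially the same route as the paper: restrict $L_K$ to a fibre $\mathbb{P}^{n_i-g}$ of $S^{n_i}C\to\pic^{n_i}C$ (with the other layers' points fixed), identify the restriction as $\mathcal{O}(p_i)$, and conclude there are no sections when $p_i<0$; the second and third assertions are handled by the same linear algebra. The fibre-degree computation you flag as delicate is already contained in Theorem \ref{th:c_1(L_K)} together with Lemma \ref{lemma:L_0}: all the classes $\theta_i$, $\eta_i$, $\eta_{ij}$ are pulled back from Picard varieties and die on the fibre, so only $p_i\xi_i=p_i\,c_1(\mathcal{O}(1))$ survives, which is exactly the calibration you were seeking.
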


The relation $K\vec{n}_0=\vec{d}-(g-1) \vec{K}$, which we obtain for states without non-localized quasi-holes, also called incompressible states, was studied in \cite{Fröhlich_Zee_1991,Wen_Zee_1992}

\begin{remark}
    Let $(K,d,g,\vec{n})$ be a configuration with localized quasi-holes. The location of the quasi-holes in layer $i$ is given by an effective divisor $D_i$. We can build a bundle of multilayer wavefunctions with localized quasi-holes\ $V_{K,g,d,\vec{n}, \lbrace D_i \rbrace_i}$ over $\pic^d(C)$ in the same way that we built $V_{K,g,d,\vec{n}}$. We do so by replacing $O(dz_0)$ by $O(dz_0-D_i)$ in each copy of $P_d$, depending on the layer.
    Then Theorem \ref{thm:main} and Theorem \ref{thm:maximalparticules} are still valid if we replace $V_{K,g,d,\vec{n}}$ by $V_{K,g,d,\vec{n}, \lbrace D_i \rbrace_i}$ and $\vec{d}$ by $\vec{d'}=(d-\deg(D_i))_i$, including in the definition of $\vec{p}$.
\end{remark}

\subsection{Outline of the paper}

Recall that $p:\prod_i S^{n_i}C \times \pic^d(C)\to \pic^d(C)$ is the projection on the last factor. The Grothendieck–Riemann–Roch theorem applied to this map and the line bundle $\mathcal{L}_K$ gives:

$$\ch\left(\sum_i (-1)^iR^i p_\star \mathcal{L}_K\right)=p_\star \left(e^{c_1(\mathcal{L}_K)} \td\left(\prod_{i=1}^k S^{n_i}C\right)\right)$$

In section \ref{sec:importantcohomologyclasses}, we introduce the cohomology classes used in our computations.
In section \ref{sec:computingthefirstChernclass}, we express the cohomology class $e^{c_1(\mathcal{L}_K)}\td\left(\prod_{i=1}^k S^{n_i}C\right)$ in terms of the previously introduced ones.
We give a sufficient condition such that all higher direct images of $\mathcal{L}_K$ vanish in the first part of section \ref{sec:applyinggrr}. 
Finally, we carry out the pushforward computation of the Grothendieck-Riemann-Roch formula in the second part of section \ref{sec:applyinggrr}. In section \ref{sec:asymptotics}, we study the large magnetic field asymptotics.

\section{Important cohomology classes}
\label{sec:importantcohomologyclasses}

We are interested in expressing $e^{c_1(\mathcal{L}_K)} \td\left(\prod_{i=1}^k S^{n_i}C\right)$ in terms of a suitable set of cohomology classes. Note that all cohomology classes in our formula are expressed in terms of first Chern classes. The first Chern class of a given vector bundle over a smooth complex variety $Y$ always falls inside its Neron-Severi group $NS(Y)=H^2(Y;\mathbb{Z})\cap H^{1,1}(Y;\mathbb{C})$. 
In this section we are going to introduce elements of the Neron-Severi group of $\prod_{i\in I} S^{n_i}C \times \pic^dC$ to express the aforementioned cohomology class.

\subsection{The Neron-Severi group of the symmetric product $S^N C$} \label{NS symmetric product}

Let $C$ be a genus $g$ complex curve and $S^NC$ its $N$-th symmetric power. We will refer to the subring of $H^\bullet(S^N C;\mathbb{Z})$ spanned by the fundamental classes of algebraic cycles on $S^N C$ through Poincaré duality as "the algebraic part" of $H^\bullet(S^N C;\mathbb{Z})$. Because of the Lefschetz theorem for $(1,1)$-classes (a particular case of the Hodge conjecture), the Neron-Severi group $NS(S^NC)$ coincides with the algebraic part of $H^2(S^NC, \mathbb{Z})$. \\

There is a natural map $\rho_N: S^NC\longrightarrow \pic^NC$ sending points in $S^NC$ to the class of the corresponding degree $N$ divisor in $\pic^NC$.
The fibers of this map are projective spaces (the linear systems of divisors). For $N\geq 2g-1$, Mattuck \cite{Mattuck_2} showed that this map actually forms a projective bundle. Given a $z_0\in C$, the map $\pic^0C \rightarrow \pic^NC$ coming from a translation by $Nz_0$  is an isomorphism. Define a divisor $\xi$ in $S^NC$ as the set of points of $S^N C$ containing $z_0$ at least once. We will denote $\xi$ both the divisor and its image in cohomology through Poincaré duality. Using the same $z_0$, Mattuck \cite{Mattuck} showed that one can fix (up to isomorphism) a vector bundle $E$ of rank $r=N-g+1$ over $J(C)=\pic^0(C)$ such that $S^NC$ is isomorphic to $\mathbb{P}(E)$, the projectivization of $E$, and such that $\xi=c_1(\mathcal{O}_{\mathbb{P}(E)}(1))$ in the Chow group. By the projective bundle formula,

$$\mathcal{A}^*(S^NC)=\mathcal{A}^*(J(C))[\xi]$$
with the relation
$$\xi^r+\rho_N^*c_1(E)\xi^{r-1}+...+\rho_N^*c_r(E)=0$$

Let $\Theta$ be the theta divisor in $\pic^{g-1}C$. Given the previous choice of $z_0$ and the associated translation isomorphisms, we can recover a copy of $\Theta$ in each fixed degree Picard group. In particular, we can consider the divisor class of $\Theta$ inside $\mathcal{A}^*(J(C))$. Let $\theta$ be the Poincaré dual to $\Theta$ in $J(C)$. It follows from Mattuck's computations of $c(E)$ in $\mathcal{A}(J(C))$, present in article \cite{Mattuck}, that $c(E)=e^{-\theta}$. We have thus obtained, for $N\geq 2g-1$, two classes in the Neron-Severi group $NS(S^NC)$. For a general curve, $NS(S^NC)$ is actually spanned by the classes $\theta$ and $\xi$. More information about this topic can be found in \cite{Griffiths_book}.\\

\subsection{Useful elements of the Neron-Severi group of $\prod_i S^{n_i} C \times \pic^d C$}

The notation introduced in this subsection will be used throughout the article.\\

From Künneth theorem we can infer at least two ways of obtaining cohomology classes in $H^2\left( \prod_i S^{n_i} C \times \pic^d C,\mathbb{Z}\right)$. For each factor $y$ in $\prod_i S^{n_i} C   \times \pic^d C$, there is an injection $H^2\left(y, \mathbb{Z} \right)\rightarrow H^2(\prod_i S^{n_i} C \times \pic^d C, \mathbb{Z})$ via the pullback by the corresponding projection. For each pair of factors $(y_1,y_2)$, there is an injection $H^1(y_1, \mathbb{Z})\otimes H^1(y_2, \mathbb{Z})\rightarrow H^2\left(\prod_i S^{n_i} C \times \pic^d C,\mathbb{Z}\right)$ whose image we call "mixed" cohomology classes.\\

For each $S^{n_i} C$, we denote $\theta_i$ and $\xi_i$ the two generators of $NS(S^{n_i}C)$ which we also see by abuse of notation as elements in $NS(\prod_i S^{n_i} C \times \pic^d C)$. Similarly, we denote $\theta$ the theta class in $\pic^d(C)$ and its pullback to $NS(\prod_i S^{n_i} C \times \pic^d C)$. The reader may assume we are using this abuse of notation whenever we define a cohomology class that can be pulled back to $\prod_i S^{n_i} C \times \pic^d C$ using projections and maps of the form $S^N C \to \pic^N C$.\\

The Neron-Severi group of $\prod_i S^{n_i} C \times \pic^d C$ also contains mixed cohomology classes. To see this, take two layers $i,j$ and consider $\sigma_{ij}: \pic^{n_i} C \times \pic^{n_j} C \to \pic^{n_i+n_j} C$ defined by the addition of divisors. We can now define the algebraic class
\eqtag{
\eta_{ij}= \sigma_{ij}^\star \theta_{n_i+n_j}-(\theta_i+\theta_j)
}{eq:mixedclass}
which is a mixed class inside $\pic^{n_i} C \times \pic^{n_j}(C)$. We define in the same way a mixed class $\eta_i$ on $\pic^{n_i} C \times \pic^d(C)$.  
We will soon see that these mixed classes are non-zero.\\

\label{sec:symplectic-basis}
    Denote by $\alpha^1,\cdots,\alpha^g, \beta^1, \cdots,\beta^g$ the basis of harmonic real 1-forms on $C$ dual to a basis of canonical cycles $a^1,\dots,a^g, b^1,\cdots,b^g$ of $H_1(C,\mathbb{Z})$. Consider, for each $i\in\{1,...,k\}$, the Abel-Jacobi map $C \longrightarrow \pic^{n_i}(C)$ given by the same choice of $z_0\in C$ we did in subsection \ref{NS symmetric product}. We denote by $\lbrace \alpha_i^r,\beta_{i}^r, \ 1\leq r\leq g \rbrace$ the symplectic basis of $H^1(\pic^{n_i}(C),\mathbb{Z})$ that pulls back to $\{\alpha^r,\beta^r, \ 1\leq r \leq g\}$ by the corresponding Abel-Jacobi map. We also denote by $\alpha^1,\cdots,\beta^g$ the symplectic basis of $H^1(\pic^{d}(C),\mathbb{Z})$ defined by the same logic. The theta classes of $NS(\prod_i S^{n_i} C \times \pic^dC)$ are expressed, in terms of these basis, as

    \eqtag{
    \theta_i=\sum_{r=1}^g \alpha_i^r \wedge \beta_i^r \ , \qquad \theta=\sum_{r=1}^g \alpha^r\wedge \beta^r}{eq:ThetaAsDifferentialForm}
    
    Since $\lbrace \alpha_i^r+\alpha_j^r, \ \beta_i^r+\beta_j^r, \ 1\leq r\leq g \rbrace$ form a symplectic basis of $\pic^{n_i+n_j}C$, $\sigma_{ij}^\star \theta_{ij}$ we have:
    \eq{
    \sigma_{ij}^\star \theta_{ij}&=\sum_{r=1}^g(\alpha^r_i+\alpha^r_j)\wedge(\beta^r_i+\beta^r_j)\\
    &=\sum_{r=1}^g \alpha^r_i \wedge \beta^r_i + \sum_{r=1}^g \alpha^r_j \wedge \beta^r_j + \sum_{r=1}^g (\alpha_i^r \wedge\beta_j^r +\alpha_j^r \wedge\beta_i^r) }
    from which one can see, recalling Eq. \ref{eq:mixedclass} and Eq. \ref{eq:ThetaAsDifferentialForm} that \eqtag{\eta_{ij}=\sum_{r=1}^g (\alpha_i^r \wedge\beta_j^r +\alpha_j^r \wedge\beta_i^r)}{eq:EtaAsDifferentialForm}
    
    One can show $\eta_{i}=\sum_{r=1}^g (\alpha_i^r\wedge \beta^r +\alpha^r \wedge\beta_i^r)$ with a similar computation.

\section{The first Chern class of $\mathcal{L}_K$ and the Todd class of $\prod_iS^{n_i}C$}
\label{sec:computingthefirstChernclass}

The end goal of this section is to compute $e^{c_1(\mathcal{L}_K)} \td\left(\prod_{i=1}^k S^{n_i}C\right)$ in terms of the cohomology classes introduced in the previous one. 

\subsection{The Todd class of $\prod_iS^{n_i}C$}

Let $\td$ be the formal power series defined by $\td(x)=\frac{x}{1-e^{-x}}$. It can be used to define the Todd class of a line bundle $L$ as $\Td(L)=\td(c_1(L))$.

The Todd class is multiplicative, so $$\Td\left(\prod_iS^{n_i}C\right)=\prod_i\Td\left(S^{n_i}C\right)$$ 
For a derivation of the Todd class of the symmetric product $S^{n_i} C$ for $n_i>2g-1$, we refer to \cite{Klevtsov_Zvonkine_2025} who carried the computation using similar methods to \cite{Mattuck},\cite{MacDonald_1962}.

$$\Td(S^{n_i}C)=\td(\xi_i)^{n_i+1-g} e^{\theta_i \frac{\td \xi_i -\xi_i-1}{\xi_i}}$$ 

In conclusion,
 $$ \Td\left( \prod_i S^{n_i}C\right)=\prod_i \td(\xi_i)^{n_i+1-g} e^{\theta_i \frac{\td \xi_i -\xi_i-1}{\xi_i}}.$$

\subsection{The first Chern class of $\mathcal{L}_K$}

\begin{theorem}
\label{th:c_1(L_K)}
Given a configuration $(K,g,d,\vec{n})$ and its associated vector of non-localized quasi-holes $\vec{p}=\vec{d}-(g-1)\vec{K}-K\vec{n}$, we have the following expression in $H^2(\prod S^{n_i}C\times \pic^d(C),\mathbb{Z})$:

    $$c_1(\mathcal{L}_K )=\sum_i (K_{ii}\theta_i -\eta_i + p_i \xi_i) + \sum_{i<j} K_{ij}\eta_{ij}$$

\end{theorem}

Let $s$ be the quotient map $s:C^{\sum n_i} \rightarrow \prod_i S^{n_i}C$ and $D$ be the divisor in $\prod S^{n_i}C$ whose pullback under $s$ is $\Delta_K$. Also, let $\mathcal{L}_0$ be the bundle $\mathcal{L}_K$ for $K$ the zero matrix. Then, by naturality of the first Chern class, we have the following relation in the cohomology of $C^{\sum n_i}\times \pic^d(C)$ (please excuse the abuse of notation):
$$s^*c_1(\mathcal{L}_K)=c_1(s^*\mathcal{L}_0)-\Delta_K=s^*c_1(\mathcal{L}_0)-s^*D$$
Since $s^*:H^2(\prod S^N(C),\mathbb{Z})\rightarrow H^2(C^N,\mathbb{Z})$ is an injection, the relation is also true in the cohomology of $\prod S^{n_i}C\times \pic^d(C)$:
\eqtag{c_1(\mathcal{L}_K)=c_1(\mathcal{L}_0)-D}{eq:relc1LkL0D}

\begin{lemma}\label{lemma:L_0}
    \eqtag{c_1(\mathcal{L}_0)=\sum_i (-\eta_i+d\xi_i)}{eq:c1L0}
\end{lemma}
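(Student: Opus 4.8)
The plan is to pull everything back along the quotient map $s\colon C^{N}\to\prod_i S^{n_i}C$ (with $N=\sum_i n_i$) and to exploit that $s^\star$ is injective on $H^2$, as recalled just above. Since the universal construction pulls back under $s\times\mathrm{id}$ to $P_d^{\boxtimes N}(-\Delta_K)$, taking $K=0$ (so $\Delta_0=0$) gives $(s\times\mathrm{id})^\star\mathcal{L}_0=\bigotimes_{\lambda=1}^N\pi_\lambda^\star P_d$, where $\pi_\lambda\colon C^N\times\pic^d C\to C\times\pic^d C$ keeps the $\lambda$-th curve factor and the Picard factor. Hence $s^\star c_1(\mathcal{L}_0)=\sum_{\lambda=1}^N\pi_\lambda^\star c_1(P_d)$, and it remains to compute $c_1(P_d)$ on $C\times\pic^d C$ and to identify the resulting pullbacks with $s^\star\xi_i$ and $s^\star\eta_i$.

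I would then split $c_1(P_d)$ using $P_d=p_1^\star O(dz_0)\otimes u^\star P_0$. The first factor contributes $d\,p_1^\star[z_0]$, a class pulled back from $C$, and the second contributes $(\mathrm{id}_C\times u)^\star c_1(P_0)$. For the $O(dz_0)$ part, the preimage under $s$ of the divisor $\xi_i\subset S^{n_i}C$ is $\bigcup_{\lambda}\{z_\lambda=z_0\}$ with $\lambda$ ranging over the factors of layer $i$ (write $\lambda\in i$), so $s^\star\xi_i=\sum_{\lambda\in i}\pi_\lambda^\star[z_0]$; summing $d\,p_1^\star[z_0]$ over all factors therefore yields exactly $s^\star\big(\sum_i d\,\xi_i\big)$.

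The delicate part is the Poincaré contribution, which must reproduce $-\sum_i\eta_i$. Here I would invoke the standard description of the normalized Poincaré class: because ${P_0}_{|\{z_0\}\times J}$ is trivial (together with the complementary normalization on $C\times\{0\}$), $c_1(P_0)$ carries no pure $H^2(C)$ or $H^2(J)$ component and equals the mixed correspondence class $-\sum_{r=1}^g\big(\alpha^r\wedge\beta^r_J-\beta^r\wedge\alpha^r_J\big)$, where $\{\alpha^r_J,\beta^r_J\}$ is the symplectic basis of $H^1(J)$ matching $\{\alpha^r,\beta^r\}$ under Abel--Jacobi. Since the composite $C^{n_i}\xrightarrow{s}S^{n_i}C\xrightarrow{\rho_{n_i}}\pic^{n_i}C$ is the additive map $(z_\lambda)\mapsto[\sum_\lambda z_\lambda]$, the classes $\alpha_i^r,\beta_i^r$ pull back to $\sum_{\lambda\in i}\pi_\lambda^\star\alpha^r$ and $\sum_{\lambda\in i}\pi_\lambda^\star\beta^r$. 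Feeding this into the expression $\eta_i=\sum_r(\alpha_i^r\wedge\beta^r+\alpha^r\wedge\beta_i^r)$ from Eq. \ref{eq:EtaAsDifferentialForm} gives $s^\star\eta_i=\sum_{\lambda\in i}\sum_r(\pi_\lambda^\star\alpha^r\wedge\beta^r+\alpha^r\wedge\pi_\lambda^\star\beta^r)$, and a term-by-term comparison identifies $\sum_{\lambda\in i}\pi_\lambda^\star(\mathrm{id}_C\times u)^\star c_1(P_0)$ with $-s^\star\eta_i$.

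Adding the two contributions gives $s^\star c_1(\mathcal{L}_0)=s^\star\big(\sum_i(-\eta_i+d\xi_i)\big)$, and injectivity of $s^\star$ yields the claim. The step I expect to be the main obstacle is fixing the Poincaré class $c_1(P_0)$ with the correct sign and verifying that the two normalizations kill every pure component; the rest is bookkeeping of how the symplectic bases transport through the additive Abel--Jacobi maps, which meshes exactly with the additive structure built into $\eta_i$.
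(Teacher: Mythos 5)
Your proof is correct, but it takes a genuinely different route from the paper's. The paper simply quotes the single-layer result of Klevtsov--Zvonkine, namely that the universal bundle on $S^{\sum n_i}C\times \pic^dC$ has first Chern class $-\eta+d\xi$, and then pulls this back along the forgetful map $\Phi:\prod_i S^{n_i}C\times\pic^dC\to S^{\sum n_i}C\times\pic^dC$, checking $\Phi^\star\xi=\sum_i\xi_i$ (by a transversal-multiplicity argument) and $\Phi^\star\eta=\sum_i\eta_i$ (by expanding in the symplectic bases). You instead descend all the way to $C^N\times\pic^dC$, write $s^\star\mathcal{L}_0=\bigotimes_\lambda\pi_\lambda^\star P_d$, and recompute $c_1(P_d)$ from the decomposition $P_d=p_1^\star O(dz_0)\otimes u^\star P_0$ together with the standard K\"unneth description of the normalized Poincar\'e class; this amounts to reproving the $k=1$, $b=0$ case of the cited formula rather than importing it. What your approach buys is self-containedness (no external reference needed, and the origin of the $-\eta_i$ term is made transparent as the correspondence class of $P_0$); what it costs is exactly the point you flag: you must pin down the sign and the vanishing of the pure components of $c_1(P_0)$ from the two normalizations, which is standard but is precisely the content the paper outsources. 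Your bookkeeping of the bases through the additive Abel--Jacobi maps is the same computation the paper performs for $\Phi^\star\eta$, just carried out one level lower on $C^N$; also note that the displayed formula $\eta_i=\sum_r(\alpha_i^r\wedge\beta^r+\alpha^r\wedge\beta_i^r)$ is the one stated after Eq.~\ref{eq:EtaAsDifferentialForm}, not Eq.~\ref{eq:EtaAsDifferentialForm} itself, which concerns $\eta_{ij}$.
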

\begin{proof}
    Consider the line bundle
$$\mathcal{L}^{\boxtimes\sum n_i}\longrightarrow S^{\sum n_i}C\times Pic^dC.$$ 

From article \cite{Klevtsov_Zvonkine_2025} we know that its first Chern class equals $-\eta+d\xi$ (see the main result for $b=0$). 

Now consider the forgetful map 
$$\Phi: \prod_i S^{n_i} C \times \pic^d C \to S^{\sum_i n_i} C \times \pic^d C $$ 

We have $c_1(\mathcal{L}_0)=\Phi^*c_1(\mathcal{L}^{\boxtimes\sum n_i})$. The fact that $\Phi^*(\eta)=\sum_i \eta_i$ is obtained by expanding
    $$\sum_{l=1}^g( \sum_{i} \alpha_i^l )\wedge \beta^l + \alpha^l\wedge ( \sum_{i} \beta_i^l)$$ 
We also have $\Phi^*(\xi)=\sum_{i}\xi_i$ because the inverse image is $\Phi^{-1}\xi=\bigcup \xi_i$ and each irreducible component $\xi_i$ has transversal multiplicity equal to 1. 
\end{proof}

We want to express $D$ in terms of $\theta_i,\xi_i$ and $\eta_{ij}$. In order to do so, we need to introduce a bit of notation first. Recall that $\Delta_{ij}$ is the divisor in $C^{\sum n_i}$ defined by $\Delta_{ij}:=\bigcup\{z^i_\lambda=z^j_\mu\}$ for $\lambda=1...n_i$ and $\mu=1...n_j$; with the additional condition that when $i=j$, we only take $\lambda < \mu$. We define $S\Delta_{ii}$ and $S\Delta_{ij}$ respectively as the images by $s$ of $\Delta_{ii}$ inside $S^{n_i}C$  and $\Delta_{ij}$ inside $S^{n_i}C\times S^{n_j}C$. In general, when the choice of $N$ is clear, we will denote $S\Delta$ the image of $C^N$'s diagonal inside $S^NC$ by the corresponding quotient map. Let's start by saying that, for $N>2g-1$, we have the following expression in $H^2(S^NC,\mathbb{Z})$:
\begin{equation}\label{expression of SDelta}
    S\Delta= 2(-\theta + (N+g-1)\xi)
\end{equation}

This is a particular case of a result presented in the book \cite[Ch. VIII, sec. 5]{Griffiths_book}, where the proof is carried out using the projection formula to compute the intersections of $S\Delta$ with all the classes in $A^\star(S^N C)$.

\begin{lemma}\label{transversal multiplicity}
Fix $i,j\in\{1,...,k\}$. Let $\Phi_{ij}: S^{n_i}C \times S^{n_j}C\rightarrow S^{n_i+n_j }C$ be the corresponding forgetful map.
    $$\Phi_{ij}^*S\Delta= S\Delta_{ii} + S\Delta_{jj}+ 2S\Delta_{ij}$$
\end{lemma}
\begin{proof}
    We will prove that the transversal multiplicity of $S\Delta_{ij}$ is $2$. Take neighborhoods $U$ of a point $\{z, z, z_1, z_2, ...\}\in S\Delta$ and $V$ of a fixed (general) antecedent of the form $(\{z, z_1, ...\},\{z, z_2, ...\})\in S\Delta_{ij}$. $\Phi_{ij}$ restrained to these neighborhoods is a two sheeted cover: If we take a different point $\{z, z', z_1, z_2, ...\}\in U$, it will have two antecedents in $V$, $(\{z, z_1, ...\},\{z', z_2, ...\})$ and $(\{z', z_1, ...\},\{z, z_2, ...\})$.
\end{proof}

\begin{lemma}
\label{lem:SDeltaiiijInNs}
For all $i,j\in\{1,...,k\}$,
    \begin{align}
    S\Delta_{ii}&=2(-\theta_i+(n_i+g-1)\xi_i) \label{eq:SDeltaiiInNS}\\
    S\Delta_{ij} &= -\eta_{ij} +n_j\xi_i + n_i\xi_j \label{eq:SDeltaijInNS}
\end{align}  
\end{lemma}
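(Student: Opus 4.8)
The plan is to establish the two identities separately: the diagonal class \eqref{eq:SDeltaiiInNS} will follow at once from the general formula \eqref{expression of SDelta}, while the mixed class \eqref{eq:SDeltaijInNS} will be extracted from Lemma \ref{transversal multiplicity}. For the first, note that $\Delta_{ii}$ is exactly the big diagonal of $C^{n_i}$ (with $\lambda<\mu$), so its image $S\Delta_{ii}$ in $S^{n_i}C$ is the class denoted $S\Delta$ for $N=n_i$. Since the configuration forces $n_i>2g-1$, I can apply \eqref{expression of SDelta} verbatim with $N=n_i$ and read off $S\Delta_{ii}=2(-\theta_i+(n_i+g-1)\xi_i)$.

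For the mixed class, I would start from Lemma \ref{transversal multiplicity} and solve for the mixed term, $2S\Delta_{ij}=\Phi_{ij}^*S\Delta - S\Delta_{ii}-S\Delta_{jj}$. Since $S\Delta_{ii}$ and $S\Delta_{jj}$ are now known, everything reduces to computing $\Phi_{ij}^*S\Delta$, which by \eqref{expression of SDelta} with $N=n_i+n_j$ reduces in turn to the two pullbacks $\Phi_{ij}^*\theta_{n_i+n_j}$ and $\Phi_{ij}^*\xi_{n_i+n_j}$.

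These two pullbacks are the heart of the argument. For $\xi$, the preimage $\Phi_{ij}^{-1}\xi_{n_i+n_j}$ consists of pairs of effective divisors whose sum contains $z_0$, that is $\xi_i\cup\xi_j$, and a transversal-multiplicity check identical to the one in the proof of Lemma \ref{lemma:L_0} shows each component occurs with multiplicity one, so $\Phi_{ij}^*\xi_{n_i+n_j}=\xi_i+\xi_j$. For $\theta$, I would use that $\theta_{n_i+n_j}$ is pulled back from $\pic^{n_i+n_j}C$ via $\rho_{n_i+n_j}$, together with the commuting square $\rho_{n_i+n_j}\circ\Phi_{ij}=\sigma_{ij}\circ(\rho_{n_i}\times\rho_{n_j})$ expressing that the addition of divisors is compatible with the addition of their classes. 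This gives $\Phi_{ij}^*\theta_{n_i+n_j}=(\rho_{n_i}\times\rho_{n_j})^*\sigma_{ij}^*\theta_{n_i+n_j}$, and the definition \eqref{eq:mixedclass} of $\eta_{ij}$ rewrites the right-hand side as $\theta_i+\theta_j+\eta_{ij}$.

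Substituting these into $\Phi_{ij}^*S\Delta=2(-\theta_i-\theta_j-\eta_{ij}+(n_i+n_j+g-1)(\xi_i+\xi_j))$ and subtracting $S\Delta_{ii}+S\Delta_{jj}$, the $\theta$-terms cancel and the coefficients of $\xi_i,\xi_j$ drop from $(n_i+n_j+g-1)$ to $n_j$ and $n_i$ respectively, leaving $2S\Delta_{ij}=-2\eta_{ij}+2n_j\xi_i+2n_i\xi_j$; dividing by two yields \eqref{eq:SDeltaijInNS}. The step I expect to require the most care is the $\theta$-pullback: one must check that the class named $\theta_{n_i+n_j}$ on the symmetric product genuinely descends from the Picard variety and that the commuting square uses the same base point $z_0$ throughout, so that \eqref{eq:mixedclass} applies with no stray correction term.
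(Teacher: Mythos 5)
Your proposal is correct and follows essentially the same route as the paper: the diagonal case is read off from the formula $S\Delta=2(-\theta+(N+g-1)\xi)$ with $N=n_i$, and the mixed case is obtained by solving $2S\Delta_{ij}=\Phi_{ij}^*S\Delta-S\Delta_{ii}-S\Delta_{jj}$ from Lemma \ref{transversal multiplicity} and substituting the pullbacks $\Phi_{ij}^*\theta=\theta_i+\theta_j+\eta_{ij}$ and $\Phi_{ij}^*\xi=\xi_i+\xi_j$. Your explicit justification of those two pullbacks (via the commuting square with $\sigma_{ij}$ and the transversal-multiplicity check for $\xi$) is a welcome elaboration of steps the paper leaves implicit.
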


\begin{proof}
    The expression of $S\Delta_{ii}$ follows directly from the expression (\ref{expression of SDelta}). For the other one, we need a small computation:
    \begin{align*}
    S\Delta_{ij} &= \tfrac{1}{2}(\Phi_{ij}^*S\Delta- S\Delta_{ii}-S\Delta_{jj})\\
     &= \tfrac{1}{2}(-2(\theta_i+\theta_j+\eta_{ij}) + 2(n_i+n_j+g-1)(\xi_i+\xi_j) -S\Delta_{ii}-S\Delta_{jj})\\
     &= \tfrac{1}{2}(2(-\eta_{ij} +n_j\xi_i + n_i\xi_j)) \\
     &=-\eta_{ij} +n_j\xi_i + n_i\xi_j\\
\end{align*}
\end{proof}
 
\begin{lemma}\label{lemma:D}
    \eqtag{D=\sum_iK_{ii}(-\theta_i+(n_i+g-i)\xi_i)+\sum_{i<j}K_{ij}(n_j\xi_i+n_i\xi_j-\eta_{ij})}{eq:DinNS}
\end{lemma}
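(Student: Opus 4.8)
The plan is to characterize $D$ by the defining relation $s^* D = \Delta_K$ and then read off its class by substituting the formulas already established in Lemma~\ref{lem:SDeltaiiijInNs}. Since $s^*$ is injective on $H^2$, it suffices to produce a single class on $\prod_i S^{n_i}C$ whose pullback equals $\Delta_K = \sum_{i\le j} K_{ij}\Delta_{ij}$; uniqueness then guarantees it is $D$, and expanding it in the generators $\theta_i,\xi_i,\eta_{ij}$ gives the desired identity.

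First I would pin down the multiplicity with which each image class $S\Delta_{ij}$ contributes. Writing $s=\prod_l s_l$ with $s_l:C^{n_l}\to S^{n_l}C$, I distinguish two cases. For $i<j$ the acting group $S_{n_i}\times S_{n_j}$ fixes no generic point of $\Delta_{ij}$, since the two coordinates it equates lie in the different factors $C^{n_i}$ and $C^{n_j}$ while within each factor the coordinates are generically distinct; hence $s$ is étale along $\Delta_{ij}$ and $s^*S\Delta_{ij} = \Delta_{ij}$. For $i=j$ a local model near a generic point of $\{z^i_\lambda = z^i_\mu\}$ reduces to the transposition action on $C^2$, whose diagonal image is cut out by the discriminant $(z_1-z_2)^2$; thus $s$ is branched of order $2$ and $s^*S\Delta_{ii} = 2\,\Delta_{ii}$. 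These multiplicities force the candidate
\[
D \;=\; \sum_i \tfrac{K_{ii}}{2}\,S\Delta_{ii} \;+\; \sum_{i<j} K_{ij}\,S\Delta_{ij},
\]
whose pullback is exactly $\sum_i K_{ii}\Delta_{ii} + \sum_{i<j}K_{ij}\Delta_{ij} = \Delta_K$, as required.

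It then remains to substitute Lemma~\ref{lem:SDeltaiiijInNs}. The factor $2$ built into $S\Delta_{ii} = 2(-\theta_i + (n_i+g-1)\xi_i)$ cancels the $\tfrac12$, yielding $K_{ii}(-\theta_i + (n_i+g-1)\xi_i)$ for each diagonal layer, while $K_{ij}\,S\Delta_{ij} = K_{ij}(n_j\xi_i + n_i\xi_j - \eta_{ij})$ for $i<j$. Summing produces precisely~\eqref{eq:DinNS} (reading the exponent there as $n_i+g-1$).

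The only genuinely delicate step is the branching bookkeeping along the diagonal: one must recognize that the quotient $C^{n_i}\to S^{n_i}C$ is doubly ramified along $S\Delta_{ii}$, so the diagonal term enters $D$ with coefficient $\tfrac{K_{ii}}{2}$ rather than $K_{ii}$. This halving is exactly what cancels the independent factor of $2$ already carried by the class of $S\Delta_{ii}$, so that the final diagonal coefficient comes out as $K_{ii}$ and not $2K_{ii}$. Keeping these two factors of two straight — one geometric (the ramification index), one cohomological (the class in Lemma~\ref{lem:SDeltaiiijInNs}) — is where the argument could easily go astray, whereas the off-diagonal terms are immediate once étaleness is noted.
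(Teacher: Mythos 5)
Your proof is correct and follows essentially the same route as the paper: both determine the multiplicities $s^*S\Delta_{ij}=\Delta_{ij}$ for $i<j$ and $s^*S\Delta_{ii}=2\Delta_{ii}$ by a local transversal/branching computation, deduce $D=\sum_i\tfrac{K_{ii}}{2}S\Delta_{ii}+\sum_{i<j}K_{ij}S\Delta_{ij}$, and substitute Lemma~\ref{lem:SDeltaiiijInNs}. Your reading of the exponent as $n_i+g-1$ is the intended one (the $n_i+g-i$ in the statement is a typo).
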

\begin{proof}
    Recall that we defined $D$ by $s^*D=\sum_{i\leq j} K_{ij}\Delta_{ij}$. By a transversal multiplicity computation similar to the one in lemma \ref{transversal multiplicity}, $s^*S\Delta_{ij}=\Delta_{ij}$ for $i<j$ and $s^*S\Delta_{ii}=2\Delta_{ii}$. One then has 
    $$D = \sum_{i<j} K_{ij}S\Delta_{ij} + \sum_i \tfrac{1}{2}K_{ii}S\Delta_{ii}$$
    We get the result by replacing $S\Delta_{ij}$ and $S\Delta_{ii}$ by the expression we obtained in Eq. \ref{eq:SDeltaiiInNS} and \ref{eq:SDeltaijInNS} of lemma \ref{lem:SDeltaiiijInNs}.
    
\end{proof}

Proof of theorem \ref{th:c_1(L_K)} follows by replacing in Eq. \ref{eq:relc1LkL0D} both $\mathcal{L}_0$ and $D$ in cohomology by the expression in Eq. \ref{eq:c1L0} and Eq. \ref{eq:DinNS} respectively.

\section{Applying the Grothendieck-Riemann-Roch theorem}
\label{sec:applyinggrr}

Recall
$$p: \prod_{i=1}^k S^{n_i} C \times \pic^d(C)  \to  \pic^d(C).$$

and the class we are interested in obtaining is $\ch(R^0p_*\mathcal{L}_K)$ as mentioned in subsection \ref{subsec:geometricmodel}. As stated in the outline of the paper, the Grothendieck–Riemann–Roch theorem applied to this proper map $p$ and the line bundle $\mathcal{L}_K$ gives:

\eqtag{\ch\left(\sum_i (-1)^iR^i p_\star \mathcal{L}_K)\right)=p_\star \left(e^{c_1(\mathcal{L}_K)} \td\left(\prod_{i=1}^k S^{n_i}C\right)\right).}{eq:GRR}

\subsection{Vanishing of the higher direct images through the Kodaira vanishing theorem}

\label{subsec:kodaira}

Let ${\mathcal{L}_K}_{|y}$ denote the restriction of the universal bundle ${\mathcal{L}_K}$ to the fiber of $p$ above $y \in \pic^d(C)$. We will find a sufficient condition so that the cohomology groups $H^i(\prod_i S^{n_i}C \times \pic^d(C)|y, {\mathcal{L}_K}_{|y})$ vanish for all $i>0, \, y\in \pic^d{C}$. In this case, by Grauert theorem, $R^ip_*\mathcal{L}_K=0$ for $i>0$, and $V_{K,d,g,\vec{n}}=R^0p_*\mathcal{L}_K$ is a locally free sheave (i.e. a vector bundle). In particular, the left hand side of Eq. \ref{eq:GRR} becomes $\ch\left(V_{K,d,g,\vec{n}}\right)$.

We will use the Kodaira vanishing theorem and prove the line bundles ${\mathcal{L}_K}_{|y}$ are ample for any $y$. Note that since we are on a smooth projective variety, the property of a line bundle being nef or ample depends only on its first Chern class (in cohomology).

\begin{lemma}
\label{lem:condition-K-diviseur-neff}

    Let $A\in M_k(\mathbb{Z})$ be a nonnegative (i.e. symmetric positive semi-definite)  matrix. Then the cohomology class $\nu=\sum_i A_{ii} \theta_i + \sum_{i<j} A_{ij} \eta_{ij}$ is nef on $\prod_i \pic^{n_i}C $ and so is its pullback to $\prod_i S^{n_i} C.$
\end{lemma}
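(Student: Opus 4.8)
The plan is to reduce the statement to the geometry of line bundles on abelian varieties. First I would use the base point $z_0$ to identify each $\pic^{n_i}C$ with the Jacobian $J(C)=\pic^0C$ by translation, so that $X:=\prod_i\pic^{n_i}C$ becomes (non-canonically) the abelian variety $J(C)^k$. Since nef-ness is preserved by isomorphisms and the classes $\theta_i,\eta_{ij}$ are translation-invariant elements of $NS(X)$, it is harmless to work on $J(C)^k$. The key structural observation is that the assignment $A\mapsto \nu_A:=\sum_i A_{ii}\theta_i+\sum_{i<j}A_{ij}\eta_{ij}$ is linear in the symmetric matrix $A$, so it suffices to understand the rank-one building blocks $A=vv^{T}$.

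For an integer vector $v=(v_1,\dots,v_k)$, consider the homomorphism $\phi_v:J(C)^k\to J(C)$ given by $(x_1,\dots,x_k)\mapsto \sum_i v_i x_i$. I would compute its action on $H^1$ using the symplectic bases of Section \ref{sec:symplectic-basis}: since $\phi_v$ is the sum map precomposed with multiplication by $v_i$ on the $i$-th factor, one gets $\phi_v^\star\alpha^r=\sum_i v_i\alpha_i^r$ and $\phi_v^\star\beta^r=\sum_i v_i\beta_i^r$. Substituting these into $\theta=\sum_r\alpha^r\wedge\beta^r$ and using the expressions \eqref{eq:ThetaAsDifferentialForm} and \eqref{eq:EtaAsDifferentialForm} yields the identity $\phi_v^\star\theta=\sum_i v_i^{2}\theta_i+\sum_{i<j}v_iv_j\eta_{ij}=\nu_{vv^{T}}$. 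Because $\theta$ is a principal polarization, hence ample and in particular nef, and pullbacks of nef classes along morphisms are nef, the class $\nu_{vv^{T}}$ is nef for every integer $v$; scaling shows the same for every rational $v$.

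To pass from rank-one rational blocks to an arbitrary nonnegative $A$, I would argue cone-theoretically. The nef cone $\mathrm{Nef}(X)\subseteq NS(X)_\mathbb{R}$ is closed and convex, and the map $\Psi:\mathrm{Sym}_k(\mathbb{R})\to NS(X)_\mathbb{R}$, $A\mapsto\nu_A$, is linear and continuous; hence $\Psi^{-1}(\mathrm{Nef}(X))$ is a closed convex cone. By the previous step it contains all $vv^{T}$ with $v\in\mathbb{Q}^k$, and the closed convex cone generated by these is exactly the cone of symmetric positive semi-definite matrices (every such form is a sum of rank-one terms, and rational $v$ are dense). Therefore $\Psi^{-1}(\mathrm{Nef}(X))$ contains every nonnegative $A$, i.e. $\nu=\nu_A$ is nef. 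Finally, the pullback of $\nu$ to $\prod_i S^{n_i}C$ along the product $\prod_i\rho_{n_i}$ of the Mattuck maps is nef, again because pullbacks of nef classes are nef.

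The step requiring the most care is the purely convex-geometric one: since the spectral decomposition of even a rational matrix generally involves irrational eigenvectors, one genuinely needs the density of rational rank-one forms in the positive semi-definite cone together with the closedness of the nef cone, rather than an integral factorization $A=B^{T}B$ (which need not exist for a positive semi-definite integral $A$). An alternative that bypasses the approximation is to invoke the dictionary between $NS(X)_\mathbb{R}$ and Hermitian forms on the universal cover of $J(C)^k$: the class $\nu_A$ corresponds to the Hermitian form $H_\theta\otimes A$, which is positive semi-definite whenever $A\succeq 0$ because the principal polarization $H_\theta$ is positive definite, and on an abelian variety positive semi-definiteness of this form is equivalent to nef-ness. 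I would present the homomorphism argument as the main line and mention this Hermitian reformulation as the conceptual reason behind it.
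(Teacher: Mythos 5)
Your proof is correct, but it takes a genuinely different route from the one in the paper. The paper argues directly: it writes $\nu=\sum_{i,j}A_{ij}\sum_r\alpha_i^r\wedge\beta_j^r$, converts to the basis of holomorphic one-forms $\omega_i=\alpha_i+\tau\beta_i$ using the period matrix, and exhibits $\nu=i\sum_{i,j}A_{ij}(\omega_i)^TB\,\ov{\omega_j}$ as a non-negative $(1,1)$-form with coefficient matrix $A\otimes B$, $B=-i(\tau-\ov{\tau})^{-1}\succ 0$; nef-ness then follows from Demailly's criterion that a class admitting a non-negative smooth representative is nef. This is exactly the ``Hermitian form $H_\theta\otimes A$'' reformulation you mention at the end, so your alternative remark is essentially the paper's proof. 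Your main line instead decomposes $A$ into rank-one pieces, realizes $\nu_{vv^T}$ for integral $v$ as $\phi_v^\star\theta$ for the homomorphism $\phi_v:J(C)^k\to J(C)$, and closes up using that the nef cone is closed and convex and that rational rank-one forms generate the PSD cone as a closed convex cone. All the steps check out: the computation $\phi_v^\star\theta=\sum_i v_i^2\theta_i+\sum_{i<j}v_iv_j\eta_{ij}$ follows from the same symplectic-basis manipulation the paper uses for $\sigma_{ij}^\star\theta$, pullbacks of nef classes along morphisms of projective varieties are nef, and your caution about not assuming an integral factorization $A=B^TB$ is well placed. What your approach buys is that it avoids the period matrix entirely, is purely functorial, and in the integral rank-one case gives the stronger conclusion that $\nu_{vv^T}$ is the pullback of an ample class (hence semi-ample, not merely nef); what the paper's computation buys is an explicit positive representative and the identification of the Hermitian form, with no limiting argument needed.
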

\begin{proof}

    Take $\lbrace \mu_e\rbrace_{1\leq e \leq g k} $ a basis of holomorphic one forms on $\prod_{i=1}^{k} \pic^{n_i}C$ and let $M$ be a symmetric matrix of size $gk$. The (1,1)-form

    $$i \sum_{e,f} M_{ef} \mu_e \wedge \ov{\mu}_f.$$  
    is non-negative if and only if $M$ is. Furthermore, by \cite[Prop. 6.10]{Demailly_2012}, a cohomology class that can be represented by a non negative (1,1) form is nef . Our goal is thus to find a basis of holomorphic one forms on $\prod_i \pic^{n_i}C$ and show that in this basis $\nu=i \sum_{e,f} M_{ef} \mu_e \wedge \ov{\mu}_f$ with $M$ non negative.

    First, let's exhibit a particular basis of holomorphic one forms on $C$. Recall we introduced a canonical basis of cycles $a^1\dots b^g$ of $H_1(C,\mathbb{Z})$. By \cite[Cor. 2.2]{Mumford_Musili_2007} there is a unique basis $\omega^1,\dots,\omega^g$ of holomorphic one forms on $C$ which satisfy $\int_{a^r} \omega^s=\delta^{r s}$ and furthermore, the matrix $\tau$ defined by 
    $$\tau^{rs}:=\int_{b^r} \omega^s$$ is symmetric, and its imaginary part is definite positive. $\tau$ is called the period matrix of $C$. In terms of the real harmonic one forms $\alpha^1,\dots \beta^g$ dual to the cycles $a^1,\dots b^g$ introduced in section \ref{sec:symplectic-basis}, we have: \eq{\omega=\alpha+\tau \beta}
    where we $\omega$, $\alpha$, $\beta$ are column vectors containing the $\omega^r$ $\alpha^r$, $\beta^r$ for $1 \leq r \leq g$.

   We can now do the same construction layer by layer to obtain a basis on $gk$ holomorphic one forms on $\prod_i \pic^{n_i} C$. In section \ref{sec:symplectic-basis}, we introduced 1-forms $\lbrace \alpha_{i}^r,\beta_{i}^r,1\leq r\leq g \rbrace$ living on each layer $i$. For a given layer $i$, let 
    \eqtag{
    \omega_i:=\alpha_i+\tau \beta_i}{eq:omegai}
    where $\alpha_i$,$\beta_i$ are column vectors containing the $\alpha_i^r$,$\beta_i^r$, for $1\leq r \leq g$. The $g$ components $\omega_i^r$ of $\omega_i$ are a basis of holomorphic one forms on $\pic^{n_i} C$ since the holomorphic structure on $\Omega^1(\pic^{n_i}(C))$ (induced by the complex structure on $\pic^{n_i}(C)$) comes from the embedding $\mathbb{Z}^{2g} \to \mathbb{C}^g$ given by $\tau$ of the lattice defining $\pic^{n_i}(C)$.

    To express $\nu$ in terms of the $\omega_i^r,\ov{\omega_i^r}, 1\leq i \leq k,1\leq r \leq g$, we first use Eq. \ref{eq:ThetaAsDifferentialForm} and Eq. \ref{eq:EtaAsDifferentialForm} to obtain \eq{\nu &= \sum_i A_{ii} \theta_i + \sum_{i<j} A_{ij} \eta_{ij} \\
   &= \sum_{i,j} \sum_{r=1}^g A_{ij} \alpha^r_i \wedge \beta^r_j.}
   
   We then replace $\alpha_i^r,\beta_i^r$ by their expression in terms of $\omega_i^r,\ov{\omega_i}^r$ using equation \ref{eq:omegai} and it complex conjugate. Let $B=-i(\tau-\ov{\tau})^{-1}$. Because $\tau$ and $A$ are symmetric, we have:
    \eq{
        \nu &=\sum_{i,j} \sum_{r=1}^g A_{ij} \alpha^r_i \wedge \beta^r_j \\
        &=-\sum_{i,j} A_{ij} \left( ({\omega_i})^T B \ov{\tau} B \ov{\omega_j} + ({\ov{\omega_i}})^T B \tau B \omega_j\right) \\ 
        &=-\sum_{i,j} A_{ij} \left( ({\omega_i})^T B \ov{\tau} B \ov{\omega_j} + ({\ov{\omega_j}})^T B \tau B \omega_i\right) \\
        &=-\sum_{i,j} A_{ij} \left( ({\omega_i})^T B \ov{\tau} B \ov{\omega_j} - (\omega_i)^T B\tau  B \ov{\omega_j}\right) \\
        &=i \sum_{i,j} A_{ij}  ({\omega_i})^T B \ov{\omega_j}\\
    }

    From the last expression we see that for the basis $\lbrace \mu_e, 1\leq e \leq gk \rbrace $ constructed from the $\lbrace \omega_i^r, 1\leq i \leq k,1\leq r \leq g \rbrace$ one can write $\nu=i \sum_{e,f} M_{ef} \mu_e \wedge \ov{\mu_f}$ with $M=A\otimes B$. Since $A$ is non-negative and $B$ is positive definite, $M$ is non-negative.

\end{proof}

\begin{lemma}
\label{lem:pullbackofamplebyproj}
For $1\leq i \leq k$, let $L_i$ be ample line bundles on projective varieties $X_i$ . Then $L=\otimes_i \pi_i^\star L_i \to \prod_i X_i$ is ample, with $\pi_i$ the projection onto the i-th factor.
\end{lemma}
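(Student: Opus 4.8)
The plan is to reduce to two factors by induction on $k$ and then exhibit an explicitly very ample power of $L$ via the Segre embedding. The case $k=1$ is trivial. Assuming the statement for $k-1$ factors, I would set $X'=\prod_{i=1}^{k-1}X_i$ and $L'=\bigotimes_{i=1}^{k-1}(\pi_i')^\star L_i$, which is ample on $X'$ by the inductive hypothesis, where $\pi_i'\colon X'\to X_i$ are the projections. Writing $\prod_{i=1}^k X_i = X'\times X_k$ and letting $q_1,q_2$ be its two projections, the relations $\pi_i=\pi_i'\circ q_1$ for $i<k$ and $\pi_k=q_2$ give $\pi_i^\star L_i = q_1^\star (\pi_i')^\star L_i$ for $i<k$, hence $\bigotimes_{i<k}\pi_i^\star L_i = q_1^\star L'$ and therefore $L = q_1^\star L' \otimes q_2^\star L_k$. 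So it suffices to treat the product of two projective varieties with one ample line bundle pulled back from each factor.

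For the two-factor case, let $L_1,L_2$ be ample on $X_1,X_2$. Since an ample line bundle on a projective variety has a very ample tensor power, I would fix a single integer $m$ for which both $L_1^{\otimes m}$ and $L_2^{\otimes m}$ are very ample, giving closed immersions $\phi_i\colon X_i\hookrightarrow \mathbb{P}^{N_i}$ with $\phi_i^\star\mathcal{O}(1)\simeq L_i^{\otimes m}$. The product $\phi_1\times\phi_2\colon X_1\times X_2\to \mathbb{P}^{N_1}\times\mathbb{P}^{N_2}$ is again a closed immersion, and composing it with the Segre embedding $\mathrm{Seg}\colon \mathbb{P}^{N_1}\times\mathbb{P}^{N_2}\hookrightarrow \mathbb{P}^{(N_1+1)(N_2+1)-1}$ yields a closed immersion of $X_1\times X_2$ into projective space. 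Using $\mathrm{Seg}^\star\mathcal{O}(1)\simeq \mathrm{pr}_1^\star\mathcal{O}(1)\otimes \mathrm{pr}_2^\star\mathcal{O}(1)$ and functoriality of pullback, the line bundle pulled back from $\mathcal{O}(1)$ along this composite is $\pi_1^\star\phi_1^\star\mathcal{O}(1)\otimes\pi_2^\star\phi_2^\star\mathcal{O}(1)\simeq \pi_1^\star L_1^{\otimes m}\otimes\pi_2^\star L_2^{\otimes m}=L^{\otimes m}$. Being the pullback of $\mathcal{O}(1)$ along a closed immersion, $L^{\otimes m}$ is very ample, so $L$ is ample; this settles the two-factor case and, with it, the induction.

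The only point requiring genuine care — and the nearest thing to an obstacle in an otherwise standard argument — is the existence of a common exponent $m$ making both $L_1^{\otimes m}$ and $L_2^{\otimes m}$ very ample at once. This is guaranteed because for an ample bundle, $L_i^{\otimes m}$ is very ample for \emph{every} sufficiently large $m$: a very ample bundle twisted by a globally generated one remains very ample, and high powers of an ample bundle are globally generated, so once $L_i^{\otimes m_i}$ is very ample every larger power is too. Any $m$ exceeding the relevant thresholds for both indices therefore works, and the argument goes through.
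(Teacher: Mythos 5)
Your proof is correct and follows essentially the same route as the paper: pass to a common very ample power of the $L_i$ and compose the product of the resulting projective embeddings with the Segre embedding to exhibit a very ample power of $L$. The only cosmetic difference is that you reduce to two factors by induction where the paper applies the multi-factor Segre embedding $\prod_i \mathbb{P}^{N_i-1}\hookrightarrow \mathbb{P}^{(\prod_i N_i)-1}$ directly, and your explicit justification of the common exponent $m$ makes precise a point the paper only asserts in passing.
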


\begin{proof}

    Up to replacing $L$ (resp. the $L_i$) by powers of $L$  (resp. the $L_i$), we can suppose the $L_i$ are very ample (in the powers $L^n$, all the factors $L_i^n$ will be very ample for $n$ large enough). The sections of each $L_i$ then give an embedding $\prod_i X_i \to \prod_i \mathbb{P}^{N_i-1}$. Composing with the Segre embedding $\prod_i \mathbb{P}^{N_i-1} \to \mathbb{P}^{(\prod_i N_i)-1}$ gives an embedding $\prod_i X_i \to \mathbb{P}^{(\prod_i N_i)-1} $ that is given by sections of $L$.

\end{proof}

\begin{proposition} 

Let $(K,d,g,\vec{n})$ be a configuration such that $K-I$ is non-negative and the associated $\vec{p}$ satisfies $p_i>-(n_i+1-g)$ for all $1\leq i \leq k$.

Then for all $i>0$ and all $y\in \pic^d(C)$:
$$H^i\left(\prod_i S^{n_i}C,{\mathcal{L}_K}_{|y}\right)=0.$$ 
\label{prop:kodairavanishing}
\end{proposition}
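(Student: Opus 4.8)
The plan is to identify the fibre $X=p^{-1}(y)=\prod_i S^{n_i}C$ together with the projective bundle structure $\rho\colon X\to\mathcal{J}:=\prod_i\pic^{n_i}C$, pin down the canonical class of $X$ from the Todd class already computed, and then run a Kodaira-type vanishing. First I would restrict $c_1(\mathcal{L}_K)$ from Theorem~\ref{th:c_1(L_K)} to $X$. The mixed classes $\eta_i$ live on $\pic^{n_i}C\times\pic^dC$ and pair one $H^1$-factor coming from the $\pic^dC$ direction; since $y$ is a fixed point of $\pic^dC$ they restrict to $0$. Hence on $X$ one has $c_1(\mathcal{L}_K{}_{|y})=\sum_i\big(K_{ii}\theta_i+p_i\xi_i\big)+\sum_{i<j}K_{ij}\eta_{ij}$.

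Next I would read off the canonical class. The degree-two part of $\Td(S^{n_i}C)=\td(\xi_i)^{n_i+1-g}e^{\theta_i(\td\xi_i-\xi_i-1)/\xi_i}$ gives $c_1(TS^{n_i}C)=(n_i+1-g)\xi_i-\theta_i$, so $c_1(K_X)=\sum_i\big(\theta_i-(n_i+1-g)\xi_i\big)$. Putting $A:=\mathcal{L}_K{}_{|y}\otimes K_X^{-1}$, additivity of $c_1$ yields
\begin{equation*}
c_1(A)=\sum_i\big((K_{ii}-1)\theta_i+(p_i+n_i+1-g)\xi_i\big)+\sum_{i<j}K_{ij}\eta_{ij}.
\end{equation*}
If $A$ is ample, Kodaira vanishing applied to $A$ gives $H^i(X,K_X\otimes A)=H^i(X,\mathcal{L}_K{}_{|y})=0$ for $i>0$, which is the claim. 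So everything reduces to proving that $A$ is ample.

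For ampleness I would split $c_1(A)=\nu+\mu$ with $\nu=\sum_i(K_{ii}-1)\theta_i+\sum_{i<j}K_{ij}\eta_{ij}$ and $\mu=\sum_i(p_i+n_i+1-g)\xi_i$. Since $K-I$ is non-negative, Lemma~\ref{lem:condition-K-diviseur-neff} (with $A=K-I$) shows $\nu$ is nef, and the hypothesis $p_i>-(n_i+1-g)$ makes every coefficient of $\mu$ strictly positive. The delicate point, and the main obstacle, is that $\mu$ is only \emph{relatively} ample for $\rho$ (each $\xi_i$ restricts to a positive multiple of the hyperplane class on the $\mathbb{P}^{n_i-g}$-fibres) and is \emph{not} ample on $X$, because $\xi_i$ is not even nef ($c_1(E_i)=-\theta$). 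The missing positivity in the base directions must come from $\nu$. Here I would use the observation that $K-I$ non-negative forces $K$ to be positive definite, so by the very computation in the proof of Lemma~\ref{lem:condition-K-diviseur-neff}, now with the positive-definite matrix $M=K\otimes B$, the companion class $\kappa=\sum_iK_{ii}\theta_i+\sum_{i<j}K_{ij}\eta_{ij}$ is represented by a strictly positive $(1,1)$-form pulled back from $\mathcal{J}$; that is, $\mathcal{O}_{\mathcal{J}}(\kappa)$ is ample. The difficulty is that when $K-I$ is only semidefinite one cannot shift this positivity back into $\nu$ without leaving the nef cone, so $A$ itself need not be ample (e.g. for $K=I$), and a direct Kodaira argument on $X$ genuinely breaks down.

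I would therefore run the vanishing along the fibration rather than force a single positive representative. On each fibre $\prod_i\mathbb{P}^{n_i-g}$ the bundle $\mathcal{L}_K{}_{|y}$ restricts to $\boxtimes_i\mathcal{O}(p_i)$, and the bound $p_i\ge-(n_i-g)$ is exactly what kills the top cohomology of $\mathcal{O}(p_i)$ on $\mathbb{P}^{n_i-g}$; by Künneth this forces $R^q\rho_\star\mathcal{L}_K{}_{|y}=0$ for $q>0$, and if some $p_i<0$ the entire fibrewise cohomology vanishes and the proposition is immediate. In the remaining regime all $p_i\ge 0$, and the projection formula gives $\rho_\star\mathcal{L}_K{}_{|y}=\mathcal{O}_{\mathcal{J}}(\kappa)\otimes V$ with $V$ an external product of symmetric powers of the Picard-type bundles, so the Leray spectral sequence reduces the statement to $H^i(\mathcal{J},\mathcal{O}(\kappa)\otimes V)=0$ for $i>0$. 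Because $\mathcal{J}$ is an abelian variety its canonical bundle is trivial, so this is a Nakano/Demailly-type vanishing for the ample $\mathcal{O}(\kappa)$ twisted by $V$; I expect the genuinely hard input to be establishing the required semipositivity of $V$, i.e. the positivity of the Picard bundles and their symmetric powers on the Jacobian, which together with $K$ being positive definite closes the argument. Reassembling Leray then gives $H^i(X,\mathcal{L}_K{}_{|y})=0$ for all $i>0$.
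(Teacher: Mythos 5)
Your setup is correct and matches the paper's strategy up to the decisive point: you restrict $c_1(\mathcal{L}_K)$ to the fibre (rightly noting that the $\eta_i$ die), identify $c_1(A)$ for $A=\mathcal{L}_K{}_{|y}\otimes\omega_X^{-1}$, and split it into $\nu=\sum_i(K-I)_{ii}\theta_i+\sum_{i<j}(K-I)_{ij}\eta_{ij}$, which is nef by Lemma \ref{lem:condition-K-diviseur-neff}, plus $\mu=\sum_i(p_i+n_i+1-g)\xi_i$. The error is your claim that $\xi_i$ ``is not even nef'' and that $\mu$ is only relatively ample, so that ``a direct Kodaira argument on $X$ genuinely breaks down.'' In fact $\xi_i$ \emph{is} ample on $S^{n_i}C$ (classically, the divisor $z_0+S^{n_i-1}C$ is ample on the symmetric product; this is precisely the fact the paper imports from Klevtsov--Zvonkine). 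Your inference from $c_1(E_i)=-\theta$ is a non sequitur: negativity of $c_1$ of the underlying bundle does not obstruct ampleness of the tautological class of its projectivization. Once $\xi_i$ is ample, each $(p_i+n_i+1-g)\xi_i$ is ample by the hypothesis on $p_i$, their external sum is ample on the product (Lemma \ref{lem:pullbackofamplebyproj}), and ample plus nef is ample by Kleiman's criterion; Kodaira vanishing then finishes. This \emph{is} the paper's proof, and it is the one you abandoned.

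Because you discard that route, the burden falls on your Leray argument, and that argument is not complete. The fibrewise analysis ($R^q\rho_\star=0$ for $q>0$ from $p_i>-(n_i+1-g)$, and immediate vanishing when some $p_i<0$) is fine, but the remaining step --- writing $\rho_\star\mathcal{L}_K{}_{|y}$ as $\mathcal{O}_{\mathcal{J}}(\kappa)\otimes V$ with $V$ built from symmetric powers of the duals of the Picard-type bundles $E_i$, and then proving $H^i(\mathcal{J},\mathcal{O}(\kappa)\otimes V)=0$ for $i>0$ --- requires a positivity statement for $V$ that you explicitly only ``expect'' rather than establish. Moreover, identifying the twist $M$ with $\mathcal{O}(\kappa)$ is itself an unproved claim (it amounts to redoing the computation of Theorem \ref{th:c_1(L_K)} relative to $\rho$). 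As written, the proposal contains a genuine gap: the one complete chain of reasoning it could have used is blocked by a false assertion about $\xi_i$, and the substitute is left at the level of a plan.
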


\begin{proof}

Let $y \in \pic^d(C)$ and $L_K={\mathcal{L}_K}_{|y}$. Let $(K,d,g,\vec{n})$ be such a configuration. We write $\omega_X$ the canonical line bundle on $X=\prod_{i=1}^{k}S^{n_i }C$.
The result will follow from the Kodaira vanishing theorem if we show that $L_{K} \otimes {\omega_X}^{-1}$ is ample. From \cite{Klevtsov_Zvonkine_2025} we know that $\omega_{S^NC}=\theta-(N+1-g)\xi$, thus 
$$\omega_X=\sum_i\theta_i-(n_i+1-g)\xi_i.$$
By the results of section \ref{sec:computingthefirstChernclass}, $L_{K} \otimes \omega_X^{-1}$ has first Chern class

$$\sum_{i} (K_{ii}  \theta_i  + p_i \xi_i )+\sum_{i<j} K_{ij} \eta_{ij} - \sum_i \left(\theta_i -(n_i+1-g) \xi_i \right)$$ which can be rewritten

$$\sum_{i}(p_i+(n_i+1-g)) \xi_i + \sum_{i<j} (K-I)_{ij} \eta_{ij} + \sum_i (K-I)_{ii} \theta_i $$

Given that the cohomology class $\xi_i$ is ample over $S^{n_i} C$ (see \cite{Klevtsov_Zvonkine_2025}) and the assumption on $p_i$, each $(p_i+(n_i+1-g)) \xi_i$ is ample over $S^{n_i}C$. Then $\sum_{i} (p_i+(n_i+1-g))\xi_i$ is ample on $S^{n_1}C\times \dots\times  S^{n_k}C$ by lemma \ref{lem:pullbackofamplebyproj}

By lemma \ref{lem:condition-K-diviseur-neff}, the second two sums add up to a nef cohomology class. Thus all three sums add to an ample class by Kleiman’s criterion. By the Kodaira vanishing theorem, the cohomology groups $H^i(S^{n_1}C\times \dots \times S^{n_k}C,L_K)$ vanish for $i>0$.\end{proof}

\begin{remark}
    $K-I$ non negative implies in particular $K$ is positive.
\end{remark}

\begin{corollary}
\label{corr:kodaira}
    Let $(K,d,g,\vec{n})$ be a configuration as in proposition \ref{prop:kodairavanishing}. From the latter proposition as well as Grauert theorem, we get that $\sum_i (-1)^iR^i p_\star \mathcal{L}_K)=R^0 p_\star \mathcal{L}_K :=V_{K,g,d,\vec{n}} $ is a vector bundle and that the l.h.s. of the Grothendieck-Riemann-Roch formula compute its Chern character: $\ch\left(\sum_i (-1)^iR^i p_\star \mathcal{L}_K)\right)=\ch(V_{K,g,d,\vec{n}})$.
\end{corollary}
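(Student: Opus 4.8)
The plan is to deduce the statement directly from Proposition \ref{prop:kodairavanishing} together with the theorem on cohomology and base change (Grauert's theorem), so that essentially all of the work is already contained in the fiberwise vanishing established above. I would proceed in three steps, the first being a verification of hypotheses, the second the main application, and the third a formal consequence.

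First, I would record that the situation satisfies the standing hypotheses of Grauert's theorem. The projection $p$ is proper because its fibers $\prod_i S^{n_i}C$ are projective varieties, and $\mathcal{L}_K$ is flat over $\pic^d(C)$: being a line bundle on the product $\prod_i S^{n_i}C \times \pic^d(C)$, it is locally free over $\mathcal{O}_{\prod_i S^{n_i}C \times \pic^d(C)}$, which in turn is flat over $\mathcal{O}_{\pic^d(C)}$ since $\prod_i S^{n_i}C \times \pic^d(C) \to \pic^d(C)$ is a base change of a projective variety over a point. Moreover the base $\pic^d(C)$ is smooth, hence reduced and connected. These are exactly the conditions under which the base change theorem produces locally free direct images.

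Second, I would feed the fiberwise vanishing into Grauert's theorem. Since Proposition \ref{prop:kodairavanishing} gives $\dim H^i\left(\prod_i S^{n_i}C, {\mathcal{L}_K}_{|y}\right) = 0$ for every $i>0$ and every $y \in \pic^d(C)$, the function $y \mapsto \dim H^i({\mathcal{L}_K}_{|y})$ is constant equal to zero, so Grauert's theorem yields $R^i p_\star \mathcal{L}_K = 0$ for all $i>0$. For the remaining sheaf $R^0 p_\star \mathcal{L}_K$, I would use that the Euler characteristic $y \mapsto \chi\left({\mathcal{L}_K}_{|y}\right)$ is locally constant for a flat family; combined with the vanishing of all higher cohomology, this forces $y \mapsto \dim H^0({\mathcal{L}_K}_{|y})$ to be locally constant, hence constant on the connected base $\pic^d(C)$. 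A second application of Grauert's theorem then shows that $R^0 p_\star \mathcal{L}_K$ is locally free, i.e. a vector bundle, which I name $V_{K,g,d,\vec{n}}$. With $R^i p_\star \mathcal{L}_K = 0$ for $i>0$ in hand, the alternating sum collapses: $\sum_i (-1)^i R^i p_\star \mathcal{L}_K = R^0 p_\star \mathcal{L}_K = V_{K,g,d,\vec{n}}$, and applying the Chern character gives $\ch\left(\sum_i (-1)^i R^i p_\star \mathcal{L}_K\right) = \ch(V_{K,g,d,\vec{n}})$, which is precisely the identification of the left-hand side of the Grothendieck--Riemann--Roch formula \ref{eq:GRR} that we sought.

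The main obstacle here is not computational but conceptual: the careful verification that the hypotheses of Grauert's theorem are genuinely met, in particular the flatness of $\mathcal{L}_K$ over $\pic^d(C)$ and the reducedness and connectedness of the base. It is these properties that upgrade the trivially constant cohomological dimensions into a globally locally free direct image, rather than merely a coherent sheaf with good generic behaviour, and that let us pass freely between the locally constant condition and the genuinely constant one on the connected abelian variety $\pic^d(C)$. Once these hypotheses are confirmed, the corollary is immediate.
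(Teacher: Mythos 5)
Your proposal is correct and follows essentially the same route as the paper, which simply invokes Proposition \ref{prop:kodairavanishing} together with Grauert's theorem (and the semicontinuity/base-change package) without spelling out the hypotheses; your verification of properness, flatness, and the reduced connected base, and your use of the local constancy of the Euler characteristic to get constancy of $h^0$, are exactly the standard details the paper leaves implicit.
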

\color{black}

\subsection{The case when there are no quasi-holes}
\label{subsec:computationoftherhs}

Now that we know a necessary condition on a configuration $(K,d,g,\vec{n})$ to have the equality $\ch\left(V_{K,g,d,\vec{n}}\right)=\ch\left(\sum_i (-1)^iR^i p_\star \mathcal{L}_K)\right)$, we will focus on the computation of $$ p_\star \left(e^{c_1(\mathcal{L}_K)} \td\left(\prod_{i=1}^k S^{n_i}C\right)\right).$$

We will carry out the computation in two steps using the fact that the projection map $p$ on the $\pic^d(C)$ factorizes as:
$$p: \prod_{i=1}^k S^{n_i} C \times \pic^d(C) \overset{p_1}{\to} \prod_{i=1}^k \pic^{n_i} C \times \pic^d(C) \overset{p_2}{\to} \pic^d(C)$$ where $p_1$ is the map which, for each $i$, associates to each configuration of points in $S^{n_i}C$ its corresponding divisor in $\pic^{n_i}C$ and $p_2$ is the projection onto the last factor. We first need the few following lemmas:

\begin{lemma}
\label{lem:coeffextractiontd}

Recall that $\td(x)=\frac{x}{1-e^{-x}}$.  Let $p$, $r$,$a$ be integers, with $a$ non negative.  We have
$$[x^r] \td(x)^{r+1} e^{px} \left(\frac{\td(x)}{x}-1\right)^a= \binom{r+p}{p-a}$$

where $[x^r]$ denote the extraction of the coefficient of $x^r$ and the binomial coefficient is taken with the convention that it is zero when the lower entry is negative or the upper entry is non-positive. See \cite{Klevtsov_Zvonkine_2025} for the proof, which is carried out using the residue theorem.
\end{lemma}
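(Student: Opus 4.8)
The plan is to prove a coefficient-extraction identity, so the natural approach is to interpret the left-hand side as a contour integral and apply the residue theorem, exactly as suggested by the reference to \cite{Klevtsov_Zvonkine_2025}. First I would recall that for a formal power series $f(x)$, the coefficient $[x^r]f(x)$ equals the residue $\frac{1}{2\pi i}\oint \frac{f(x)}{x^{r+1}}\,dx$ around a small loop encircling the origin. Applying this to $f(x)=\td(x)^{r+1}e^{px}\left(\frac{\td(x)}{x}-1\right)^a$ and substituting $\td(x)=\frac{x}{1-e^{-x}}$, the integrand becomes
\[
\frac{1}{2\pi i}\oint \frac{1}{x^{r+1}}\left(\frac{x}{1-e^{-x}}\right)^{r+1} e^{px}\left(\frac{1}{1-e^{-x}}-1\right)^a dx
=\frac{1}{2\pi i}\oint \frac{e^{px}}{(1-e^{-x})^{r+1}}\left(\frac{e^{-x}}{1-e^{-x}}\right)^a dx,
\]
where the powers of $x$ cancel cleanly because each factor of $\td(x)$ contributes exactly one power of $x$ in the numerator.

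The key simplification is the change of variables $u=1-e^{-x}$, so that $e^{-x}=1-u$, $e^{x}=(1-u)^{-1}$, and $dx=\frac{du}{1-u}$. Under this substitution the contour around $x=0$ maps to a contour around $u=0$, and the integral transforms into
\[
\frac{1}{2\pi i}\oint \frac{(1-u)^{-p}}{u^{r+1}}\cdot\frac{(1-u)^{a}}{u^{a}}\cdot\frac{du}{1-u}
=\frac{1}{2\pi i}\oint \frac{(1-u)^{a-p-1}}{u^{r+a+1}}\,du
=[u^{r+a}](1-u)^{a-p-1}.
\]
Extracting this coefficient via the generalized binomial theorem gives $\binom{a-p-1}{r+a}=(-1)^{r+a}\binom{(r+a)-(a-p-1)-1}{r+a}=(-1)^{r+a}\binom{r+p}{r+a}$, and I would then convert this through the symmetry $\binom{r+p}{r+a}=\binom{r+p}{p-a}$ together with a sign-reconciliation argument (using the upper-negation identity for binomial coefficients) to land exactly on $\binom{r+p}{p-a}$, matching the stated convention that the coefficient vanishes when $p-a<0$ or when $r+p\le 0$.

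The main obstacle I anticipate is not the residue computation itself but the careful bookkeeping of the boundary conventions for the binomial coefficient. The statement fixes a nonstandard convention (zero when the lower entry is negative or the upper entry is nonpositive), and I would need to verify that the contour-integral answer respects precisely these degenerate cases — in particular checking that the substitution $u=1-e^{-x}$ remains a valid biholomorphism on a small punctured neighborhood so that no spurious residues at $u=1$ (i.e.\ $x\to\infty$) are picked up, and that the negative-exponent expansion $(1-u)^{a-p-1}$ is taken as the power series around $u=0$ rather than around $u=1$. Once the sign and convention matching is handled, the identity follows directly; since the problem statement explicitly defers to \cite{Klevtsov_Zvonkine_2025} for the proof, I would present this residue argument concisely and cite that reference for the detailed verification of the edge cases.
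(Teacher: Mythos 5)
Your residue-theorem argument is exactly the approach the paper attributes to the cited reference, and the computation is correct: the substitution $u=1-e^{-x}$ turns the residue into $[u^{r+a}](1-u)^{a-p-1}=(-1)^{r+a}\binom{a-p-1}{r+a}=\binom{r+p}{r+a}=\binom{r+p}{p-a}$ (the last step by symmetry, valid in the regime $r+p\geq 0$ relevant to the paper's application). The only slip is that your displayed extraction omits the factor $(-1)^{r+a}$ coming from $(1-u)^{a-p-1}=\sum_{j}\binom{a-p-1}{j}(-u)^{j}$; it cancels exactly against the $(-1)^{r+a}$ produced by upper negation, so the ``sign-reconciliation'' you defer resolves to no residual sign.
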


\begin{lemma}
\label{lem:firstpushforward_lemma1}
Let $n_i,p_i$ be integers. For $1\leq i\leq k$ fixed, 
$${p_1}_\star \left( e^{p_i \xi_i}\td(\xi_i)^{n_i+1-g}  e^{\theta_i \frac{\td \xi_i -\xi_i-1}{\xi_i}} \right)=f_i(\theta_i)$$

where $f_i$ is the polynomial defined as $$f_i(x)= \sum_{a\geq 0} \frac{1}{a!} \binom{n_i-g+p_i}{p_i-a} x^a$$
\end{lemma}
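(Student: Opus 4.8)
The plan is to reduce the pushforward $(p_1)_\star$ to fibre integration along the single projective bundle $\rho:=\rho_{n_i}\colon S^{n_i}C\to\pic^{n_i}C$ and then match the answer against the coefficient–extraction identity of Lemma \ref{lem:coeffextractiontd}. Since the integrand involves only the classes $\xi_i,\theta_i$ coming from the $i$-th factor, the pushforward along $p_1=\bigl(\prod_j\rho_{n_j}\bigr)\times\mathrm{id}$ factorises and reduces to $\rho$, all the other factors and $\pic^dC$ being inert. By the discussion of Subsection \ref{NS symmetric product}, $\rho$ exhibits $S^{n_i}C$ as the projectivisation $\mathbb{P}(E)$ of a rank $R:=n_i+1-g$ bundle $E$ over $J(C)$, with $\xi_i=c_1(\mathcal{O}_{\mathbb{P}(E)}(1))$ and $c(E)=e^{-\theta}$.

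First I would record the projective–bundle pushforward. Since the total Segre class satisfies $s(E)=c(E)^{-1}=e^{\theta}$, one has $s_j(E)=\theta_i^{\,j}/j!$, and the fibre–integration rule $\rho_\star(\xi_i^{\,R-1+j})=s_j(E)$ (with $\rho_\star(\xi_i^{\,m})=0$ for $m<R-1$) gives, for any power series $\phi$ whose coefficients are pulled back from the base,
\[
\rho_\star\bigl(\phi(\xi_i)\bigr)=\sum_{j\ge 0}\bigl([\xi_i^{\,R-1+j}]\,\phi\bigr)\,\frac{\theta_i^{\,j}}{j!}.
\]
I would then substitute $\phi=e^{p_i\xi_i}\,\td(\xi_i)^{R}\,e^{\theta_i h(\xi_i)}$, where $h(\xi):=\frac{\td\xi-\xi-1}{\xi}$ is the genuine (pole–free) power series occurring in $\Td(S^{n_i}C)$, treating the base class $\theta_i$ as a constant. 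Expanding $e^{\theta_i h(\xi_i)}=\sum_b\theta_i^{\,b}h(\xi_i)^b/b!$ and collecting powers of $\theta_i$ coming from both this expansion and the Segre factor $\theta_i^{\,j}/j!$, the coefficient of $\theta_i^{\,a}$ in $\rho_\star(\phi)$ becomes $\sum_{j+b=a}\frac{1}{j!\,b!}\,[\xi_i^{\,R-1+j}]\bigl(e^{p_i\xi_i}\td(\xi_i)^{R}h(\xi_i)^b\bigr)$.

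The crux is to recognise this as the coefficient produced by Lemma \ref{lem:coeffextractiontd} with $r=n_i-g=R-1$ and $p=p_i$, so that $\binom{r+p}{p-a}=\binom{n_i-g+p_i}{p_i-a}$ and $\td(x)^{r+1}=\td(x)^{R}$. The decisive observation is the Laurent identity $\frac{\td\xi}{\xi}-1=h(\xi)+\xi^{-1}$: the spurious simple pole $\xi^{-1}$ appearing in Lemma \ref{lem:coeffextractiontd} is precisely the index shift created by the fibre integration $\rho_\star(\xi^{R-1+j})$. Concretely, expanding $\bigl(\tfrac{\td\xi}{\xi}-1\bigr)^a=(h(\xi)+\xi^{-1})^a=\sum_b\binom{a}{b}h(\xi)^b\,\xi^{\,b-a}$ and extracting $[\xi^{R-1}]$ turns each $\xi^{\,b-a}$ into $[\xi^{\,R-1+(a-b)}]$, while $\binom{a}{b}/a!=1/(b!\,(a-b)!)$ reproduces exactly the sum above. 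Hence the coefficient of $\theta_i^{\,a}$ equals $\frac{1}{a!}\binom{n_i-g+p_i}{p_i-a}$, which is the coefficient of $x^a$ in $f_i$, and $\rho_\star(\phi)=f_i(\theta_i)$.

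I expect the main obstacle to be bookkeeping rather than conceptual: cleanly separating the two sources of $\theta_i$ (the Todd exponential and the Segre pushforward) and verifying that the binomial reindexing aligns them with the single pole of Lemma \ref{lem:coeffextractiontd}. One must also check that the Segre–class convention is the one compatible with the relation $c(E)=e^{-\theta}$ and the generating relation for $\xi_i$ fixed in Subsection \ref{NS symmetric product}, so that indeed $\rho_\star(\xi_i^{\,R-1+j})=\theta_i^{\,j}/j!$.
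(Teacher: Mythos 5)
Your proposal is correct and follows essentially the same route as the paper: both expand the Todd exponential in powers of $\theta_i$, apply the fibre-integration rule $\rho_\star(\xi_i^{\,n_i-g+j})=\theta_i^{\,j}/j!$, and recombine the resulting sum via the identity $\frac{\td\xi}{\xi}-1=\frac{\td\xi-\xi-1}{\xi}+\xi^{-1}$ so that Lemma \ref{lem:coeffextractiontd} applies with $r=n_i-g$, $p=p_i$. The only difference is cosmetic: you justify the pushforward rule explicitly through Segre classes and $s(E)=c(E)^{-1}=e^{\theta}$, whereas the paper simply asserts it.
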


\begin{proof}
   
Let $B(x)=\frac{\td(x)-1-x}{x}$ and $A_i(x)=e^{p_i x} \td(x)^{n_i+1-g}$. Developing the exponential, we get  
$$e^{p_i \xi_i} \td(\xi_i)^{n_i+1-g}  e^{\theta_i \frac{\td \xi_i -\xi_i-1}{\xi_i}}=\sum_k \frac{1}{k!} \theta_i^k A_i(\xi_i) B^k(\xi_i)$$
 
For any $a \in \mathbb{N}$,
\eq{
    [\theta_i^a] \,  {p_1}_\star \left(\sum_k \frac{1}{k!} \theta_i^k A_i(\xi_i) B^k(\xi_i)\right)
    &=[\theta_i^a] \, \sum_k \frac{1}{k!}\theta_i^k {p_1}_\star(\xi_i^{n_i-g+a-k}) [x^{n_i-g+a-k}] A_i(x) B^k(x)\\
    &=\frac{1}{a!} \sum_k  \binom{a}{k} [x^{n_i-g+a-k}] A_i(x) B^k(x)\\
    &=\frac{1}{a!} [x^{n_i-g}] A_i(x) \left(B(x)+\frac{1}{x}\right)^a\\
    &=\frac{1}{a!} [x^{n_i-g}] e^{p_i x} \td(x)^{n_i+1-g} \left(\frac{\td(x)}{x}-1\right)^a\\
}
    The result follows from lemma \ref{lem:coeffextractiontd}.

\end{proof}

\begin{lemma}
Let $(K,d,g,\vec{n})$ be a configuration. The complete expression for the pushforward by $p_1$ is:
\label{lem:firstpushforward}
    $${p_1}_\star \left(e^{c_1(\mathcal{L}_K)} \td\left(\prod_{i=1}^k S^{n_i}C\right)\right)=\left( \prod_i f_i(\theta_i) \right) e^{\sum_i (K_{ii}\theta_i -\eta_i) + \sum_{i<j} K_{ij}\eta_{ij} }$$
    
\end{lemma}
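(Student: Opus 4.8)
The plan is to reduce everything to the single-layer pushforward of Lemma \ref{lem:firstpushforward_lemma1}. First I would substitute into the integrand both the expression for $c_1(\mathcal{L}_K)$ from Theorem \ref{th:c_1(L_K)} and the Todd class formula, and regroup the factors according to whether or not they involve the relative hyperplane classes $\xi_i$. This gives
$$e^{c_1(\mathcal{L}_K)}\,\Td\Big(\prod_i S^{n_i}C\Big)=e^{\sum_i K_{ii}\theta_i-\sum_i\eta_i+\sum_{i<j}K_{ij}\eta_{ij}}\;\prod_i\Big(e^{p_i\xi_i}\td(\xi_i)^{n_i+1-g}e^{\theta_i\frac{\td\xi_i-\xi_i-1}{\xi_i}}\Big).$$
The structural point is that $p_1=\big(\prod_i\rho_{n_i}\big)\times\mathrm{id}_{\pic^d C}$ is a product of the Mattuck projective-bundle projections $\rho_{n_i}\colon S^{n_i}C\to\pic^{n_i}C$. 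Among our generators, only the $\xi_i$ are relative hyperplane classes; the classes $\theta_i$, $\eta_i$ and $\eta_{ij}$ all live on factors of the base $\prod_i\pic^{n_i}C\times\pic^d C$ and are therefore pullbacks along $p_1$.

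Next I would extract the base-class exponential. Since $e^{\sum_i K_{ii}\theta_i-\sum_i\eta_i+\sum_{i<j}K_{ij}\eta_{ij}}$ involves no $\xi_i$, it is a $p_1$-pullback and comes straight out of ${p_1}_\star$ by the projection formula. Because $p_1$ is a product of maps acting on mutually independent factors, fiber integration is multiplicative, and the remaining pushforward splits into a product over the layers of
$${\rho_{n_i}}_\star\Big(e^{p_i\xi_i}\td(\xi_i)^{n_i+1-g}e^{\theta_i\frac{\td\xi_i-\xi_i-1}{\xi_i}}\Big),$$
which is exactly the quantity evaluated in Lemma \ref{lem:firstpushforward_lemma1} and hence equals $f_i(\theta_i)$. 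Here the class $\theta_i$ appearing inside the Todd factor plays the role of a base class that cannot be pulled out directly, because it is entangled with $\xi_i$; this is precisely why it survives the fiber integration and reemerges as the argument of the polynomial $f_i$. Collecting the factors and rewriting $\sum_i K_{ii}\theta_i-\sum_i\eta_i=\sum_i(K_{ii}\theta_i-\eta_i)$ yields the claimed expression.

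The one step I would treat with care, and the main (albeit mild) obstacle, is the factorization of ${p_1}_\star$ into the product of the individual fiber integrations ${\rho_{n_i}}_\star$. This relies on the product structure of $p_1$ together with the fact that each $\xi_i$ is supported on its own factor, so that the projection formula may be applied one layer at a time with all the other $\xi_j$ and all genuine base classes held fixed. Once this is granted, the statement is an immediate consequence of Lemma \ref{lem:firstpushforward_lemma1}.
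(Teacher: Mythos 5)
Your proof is correct and follows essentially the same route as the paper: substitute the expressions for $c_1(\mathcal{L}_K)$ and the Todd class, pull the $\xi_i$-free exponential out by the projection formula, use the product structure of $p_1$ to integrate layer by layer, and invoke Lemma \ref{lem:firstpushforward_lemma1} for each factor. The factorization step you flag is exactly the point the paper also relies on (``the projection $p_1$ acts on each factor independently''), so there is no gap.
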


\begin{proof}

Recall that in terms of the cohomology classes introduced in section \ref{sec:importantcohomologyclasses}, $\td(S^{n_i}C)$ reads

$$\td(S^{n_i}C)=\td(\xi_i)^{n_i+1-g} e^{\theta_i \frac{\td \xi_i -\xi_i-1}{\xi_i}}.$$ 

Replacing $c_1(\mathcal{L}_K)$ by its expression obtained in theorem \ref{th:c_1(L_K)}, we get

\eq{
e^{c_1(\mathcal{L}_K)} \td\left(\prod_{i=1}^k S^{n_i}C\right)&=e^{\sum_i (K_{ii}\theta_i -\eta_i) + \sum_{i<j} K_{ij}\eta_{ij} } \prod_i e^{p_i x} \td(\xi_i)^{n_i+1-g}  e^{\theta_i \frac{\td \xi_i -\xi_i-1}{\xi_i}}
}

Since the projection $p_1$ acts on each factor independently, we get ${p_1}_\star \left(  \prod_i \xi_i^{n_i-g+k} \right)=\prod_i \frac{\theta_i^k}{k!}$.

Furthermore, since all the other cohomology classes are defined via pullbacks from the $\pic^{n_i}$ to $S^{n_i} C$, the projection formula implies that the pushforward on any monomial in all the $\theta_i, \eta_i,\eta_{ij}$ and some $\xi_i$ to the power ${n_i-g+k}$ is simply the same monomial where we replaced $\xi_i^{n_i-g+k}$ by $\frac{\theta_i^k}{k!}$. The result then follows from lemma \ref{lem:firstpushforward_lemma1}.

\end{proof}

We will now compute the pushforward by $p$ in the specific case where the particles numbers are exactly given by $K\vec{n}=\vec{d}-(g-1)\vec{K}$.

\begin{proposition}
\label{prop:tomainthm}
    Let $(K,g,d,\vec{n_0})$ be a configuration such that $K\vec{n}_0=\vec{d}-(g-1)\vec{K}$. Then

    $$p_\star \left(e^{c_1(\mathcal{L}_K)} \td\left(\prod_{i=1}^k S^{n_i}C\right)\right)=\det(K)^g e^{|K^{-1}|\theta}$$

\end{proposition}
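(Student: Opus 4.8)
The plan is to resume from the first pushforward of Lemma~\ref{lem:firstpushforward} and to carry out the remaining integration over the fibre $\prod_i \pic^{n_i}C$ of $p_2$ as a fermionic Gaussian (Berezin) integral. First I would use the hypothesis $K\vec{n}_0=\vec{d}-(g-1)\vec{K}$, which is exactly the vanishing $\vec{p}=\vec{d}-(g-1)\vec{K}-K\vec{n}_0=0$ of the vector of non-localized quasi-holes. With every $p_i=0$ the polynomials of Lemma~\ref{lem:firstpushforward} collapse, since $f_i(x)=\sum_{a\ge 0}\frac{1}{a!}\binom{n_i-g}{-a}x^a=1$ because $\binom{n_i-g}{-a}=0$ for $a\ge 1$ and equals $1$ for $a=0$. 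Hence
\[
{p_1}_\star\!\left(e^{c_1(\mathcal{L}_K)}\td\Big(\prod_i S^{n_i}C\Big)\right)=e^{\Omega},\qquad \Omega=\sum_i K_{ii}\theta_i-\sum_i\eta_i+\sum_{i<j}K_{ij}\eta_{ij},
\]
so that only the pushforward ${p_2}_\star e^{\Omega}$ onto $\pic^d(C)$ remains.

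Next I would rewrite $\Omega$ in the harmonic basis $\alpha_i^r,\beta_i^r,\alpha^r,\beta^r$. Using \eqref{eq:ThetaAsDifferentialForm}, \eqref{eq:EtaAsDifferentialForm} and $\eta_i=\sum_r(\alpha_i^r\wedge\beta^r+\alpha^r\wedge\beta_i^r)$, together with the symmetry of $K$ to merge the diagonal and off-diagonal layer terms into $\sum_{i,j}K_{ij}\,\alpha_i^r\wedge\beta_j^r$, one obtains $\Omega=\sum_{r=1}^g\Omega_r$ with
\[
\Omega_r=\sum_{i,j}K_{ij}\,\alpha_i^r\wedge\beta_j^r-\sum_i\big(\alpha_i^r\wedge\beta^r+\alpha^r\wedge\beta_i^r\big).
\]
The generators occurring in distinct $r$-blocks are disjoint and each $\Omega_r$ has even degree, so the blocks commute and $e^{\Omega}=\prod_{r=1}^g e^{\Omega_r}$; the fibre integration factorises accordingly, the $\pic^{n_i}C$-direction of index $r$ being paired with the generators $\alpha_i^r,\beta_i^r$.

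The heart of the argument is then to recognise the fibre pushforward of each block as a Berezin integral and to evaluate it by Wick's formula for exterior algebras. The integration over $\prod_i \pic^{n_i}C$ is normalised by $\int_{\pic^{n_i}C}\theta_i^g/g!=\int_{\pic^{n_i}C}\prod_r \alpha_i^r\wedge\beta_i^r=1$, so it extracts the coefficient of the top fibre monomial $\prod_{i,r}\alpha_i^r\wedge\beta_i^r$ in $e^{\Omega}$. Treating $\{\alpha_i^r\}_i$ and $\{\beta_i^r\}_i$ as conjugate odd variables coupled by the quadratic form $K$, and $\alpha^r,\beta^r$ as external sources entering through the two mixed terms $-\eta_i$, the fermionic Gaussian formula produces, block by block, a determinant factor $\det K$ from the $K$-coupling together with the Wick contraction of the two external legs through the propagator $K^{-1}$. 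The two mixed couplings each carry the coefficient $-1$ coming from $-\eta_i$, whose product restores a positive sign, so the external contribution is $\big(\sum_{i,j}K^{-1}_{ij}\big)\,\alpha^r\wedge\beta^r=|K^{-1}|\,\alpha^r\wedge\beta^r$. Thus ${p_2}_\star e^{\Omega_r}=\det(K)\,e^{|K^{-1}|\,\alpha^r\wedge\beta^r}$, and taking the product over $r=1,\dots,g$ with $\theta=\sum_r\alpha^r\wedge\beta^r$ yields ${p_2}_\star e^{\Omega}=\det(K)^g\,e^{|K^{-1}|\theta}$, which is the asserted formula.

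The hardest part will be the careful determination of the overall sign and of the sign of the exponent. This requires tracking three ingredients at once: the complex orientation of $\prod_i \pic^{n_i}C$ fixing $\int \prod_{i,r}\alpha_i^r\wedge\beta_i^r=+1$, the reordering signs generated when extracting the top fibre monomial from $e^{\Omega_r}$, and the sign of the contraction of the two $-\eta_i$ legs. A clean way to pin these down is to organise $\Omega_r$ as a single $(k+1)\times(k+1)$ quadratic form with fibre block $K$ and base–fibre coupling $-\vec{1}$, and to evaluate the partial Berezin integral through the associated Schur complement, which contributes $\vec{1}^{\,T}K^{-1}\vec{1}=|K^{-1}|$; multiplicativity over the $g$ blocks then propagates the single-block answer to the determinant factor $\det(K)^g$ and the exponent $|K^{-1}|\theta$ simultaneously. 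Once the sign is settled on the single block, the general statement follows purely formally from this block/Gaussian structure.
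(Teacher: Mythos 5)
Your strategy coincides with the paper's proof essentially step for step: the hypothesis forces $\vec{p}=0$, so every $f_i\equiv 1$ in Lemma \ref{lem:firstpushforward}; the remaining exponent is split into $g$ blocks in the symplectic basis; the blocks factorize under Fubini for Berezin integrals; and each block is evaluated by Wick's formula for complex fermions (the paper cites \cite{Caracciolo_Sokal_Sportiello_2013}). The one genuine error is the sign of the exponent, which is precisely the point you flagged as "the hardest part" and then got wrong. You argue that the two $(-1)$'s carried by the source terms $-\eta_i$ cancel, giving ${p_2}_\star e^{\Omega_r}=\det(K)\,e^{+|K^{-1}|\alpha^r\wedge\beta^r}$. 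But the sign of the sources is irrelevant, since they enter the answer quadratically; what your argument drops is the intrinsic fermionic reordering sign in the contraction. Concretely, for $k=g=1$ with $\Omega=K\ov{\psi}\psi-\alpha\psi-\ov{\psi}\beta$, the coefficient of the top fibre monomial is
\[
K\ov{\psi}\psi+(-\alpha\psi)(-\ov{\psi}\beta)=K\ov{\psi}\psi+\alpha\psi\ov{\psi}\beta=(K-\alpha\beta)\,\ov{\psi}\psi,
\]
because $\psi\ov{\psi}=-\ov{\psi}\psi$; the Berezin integral is therefore $K-\alpha\beta=K\,e^{-K^{-1}\alpha\beta}$, with a minus. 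In general your own Schur-complement suggestion, carried out via the substitution $\psi\mapsto\psi+K^{-1}\lambda$, $\ov{\psi}\mapsto\ov{\psi}+\ov{\lambda}K^{-1}$ (exactly the completion of the square the paper uses later in the proof of Lemma \ref{lem:Lidet}), turns the exponent into $\ov{\psi}^T K\psi-\ov{\lambda}K^{-1}\lambda$, so each block yields $\det(K)\,e^{-|K^{-1}|\,\alpha^r\wedge\beta^r}$ and the full pushforward is $\det(K)^g e^{-|K^{-1}|\theta}$.

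Be aware that the paper itself is internally inconsistent on this sign: the displayed statement of Proposition \ref{prop:tomainthm} (and the last line of the proof of Theorem \ref{thm:main}) reads $e^{+|K^{-1}|\theta}$, while the paper's own Wick computation concludes $\det(K)^g e^{-|K^{-1}|\theta}$, in agreement with the statement of Theorem \ref{thm:main}, with the corollary $\sigma=-|K^{-1}|\theta$, and with the $\vec{p}=0$ specialization of Proposition \ref{prop:multilayer_Chern_character} (where only $I=\emptyset$ survives and the sum collapses to $\det(K)^g\sum_m(-|K^{-1}|\theta)^m/m!$). So the $+$ in the proposition as stated is a misprint, and your answer coincidentally matches the misprint rather than the correct result. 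Apart from repairing this contraction sign, your proof is the paper's proof.
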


\begin{proof}
Let $(K,g,d,\vec{n_0})$ such a configuration. The associated $\vec{p}$ is $\vec{p}=\vec{d}-K\vec{n}-(g-1)\vec{K}=0$ which implies the $f_i$' appearing in \ref{lem:firstpushforward} are all one. Thus

$${p_2}_\star \left( \left[\prod_i f_i(\theta_i) \right] e^{\sum_i (K_{ii}\theta_i -\eta_i) + \sum_{i<j} K_{ij}\eta_{ij} } \right)={p_2}_\star\left(e^{\sum_{i<j} K_{ij} \eta_{ij} + \sum_i (K_{ii} \theta_i -\eta_i) } \right).$$

Recall that the classes $\eta_{ij}$, $\theta_i$, $\eta_i$ can be expressed in terms of the $\alpha_i^r$, $\beta_j^r$ the symplectic bases introduced in section \ref{sec:symplectic-basis} of the spaces $H^1(\pic^{n_i},\mathbb{C})$ and in terms of $\alpha^r$, $\beta^r$ which form a symplectic basis on $\pic^d(C)$. The map ${p_2}_\star$ selects the terms that contain the top form $\prod_{i=1}^k \alpha_i^1 \beta_i^1 \dots \alpha_i^g \beta_i^g$ and integrate this factor out, leaving a cohomology class on $\pic^d(C)$.\\

For the extraction to be more convenient, we introduce $\psi^r=(\beta_{1}^r,\dots, \beta_{k}^r)^T$, $\ov{\psi^r}=(\alpha_{1}^r,\dots, \alpha_{k}^r)^T$, $\lambda^r=(\beta^r,\dots,\beta^r)^T$ and $\ov{\lambda^r}=(\alpha^r,\dots,\alpha^r)^T$ for $1\leq r \leq g$. To shorten notations we omit the $\wedge$ symbol when multiplying forms.

One can then write $$\sum_i (K_{ii}\theta_i -\eta_i) + \sum_{i<j} K_{ij}\eta_{ij}=\sum_{r=1}^g \left( (\ov{\psi^r})^T K \psi^r - \ov{\lambda^r} \psi^r - \ov{\psi^r} \lambda \right)$$
and the computation of the pushforward by $p_2$ amounts to extracting the terms containing $\prod_{i=1}^{k} \prod_{r=1}^g \ov{\psi^r}_i \psi^r_i $ in the expression 
\eqtag{e^{\sum_i (K_{ii}\theta_i -\eta_i ) + \sum_{i<j} K_{ij}\eta_{ij} }=\prod_{r=1}^g \left( e^{(\ov{\psi^r})^T K \psi^r - \ov{\lambda^r} \psi^r - \ov{\psi^r} \lambda} \right).}{eq:secondpushforward}

This is a purely combinatorial problem, we can forget that the $\psi_i^r$,$\ov{\psi^s_j}$ are differential forms and simply see them as generators of a so called Grassmann algebra. For more details about Grassmann algebras, see \cite{Berezin_1987,Caracciolo_Sokal_Sportiello_2013}. Here, we recall that for $R$ a commutative ring with identity and m symbols $\chi_a, 1 \leq a \leq m$, the associated Grassmann algebra is the ring of polynomials in the $\chi_a$ quotiented by the relations $\chi_a \chi_b + \chi_b \chi_a = 0$ for $1\leq a < b \leq m$ and $\chi_a^2=0, 1\leq a \leq m$. On such an algebra we can define an $R$-linear operator $\int d \chi_a$ as 
 $$\int d\chi_a \chi_{a_1}\dots \chi_{a_q}=
\begin{cases}
    (-1)^{\delta-1} \chi_{a_1}\dots \chi_{a_{\delta-1}},\chi_{a_{\delta+1}}\dots, \chi_{a_q} &\text{if } a = a_\delta \\
    0 & \text{if } a \notin \lbrace a_1,\dots, a_q\rbrace.
\end{cases}
$$ for any monomial $\chi_{a_1}\dots \chi_{a_q}$ with $a_1 <\dots < a_q$. A Grassmann algebra is a graded algebra, with $\deg\left(\chi_{a_1}\dots \chi_{a_q}\right)=q$.

In our case, we take the Grassmann algebra associated to the ring $R=\mathbb{R}$ and the ordered symbols $$\psi_1^1,\ov{\psi_1^1},\psi_1^2,\ov{\psi_1^2},\dots, \psi_1^g,\ov{\psi_1^g},\psi_2^1,\ov{\psi_2^1},\dots \dots, \psi_k^g,\ov{\psi_k^g},\lambda_1^1,\ov{\lambda_1^1},\dots,\lambda_1^g,\ov{\lambda_1^g}.$$ Note that this choice of ordering comes from the orientation on $\prod_i \pic^{n_i} C \times \pic^d(C)$. 
As an example, fix $i\in[k],r\in[g]$ and $\kappa$ an element of the Grassman algebra which doesn't contain $\psi_i^r$. Then for any $j \in[k],s\in[g]$ we have $$\int d{\psi^r_i} \psi^s_{j} \kappa  = \kappa \delta_i^j \delta_r^s$$ while $$\int d{\psi^r_i} \kappa \psi^s_{j}   = (-1)^{\deg(\kappa)} \kappa \delta_i^j \delta_r^s$$
where $\delta_i^j$ is the Kronecker symbol.

Note that in $\prod_{i=1}^{k} \prod_{r=1}^g \ov{\psi^r_i} \psi^r_i $, the ordering in $r$ and $i$ does not matter because each elements of the form $\ov{\psi^r_i} {\psi^r}_i$ is even in the Grassmann algebra.

In our computation of the pushforward under $p_1$, we regrouped terms when they belonged to the same layer $S^{n_i} C$, for $1\leq i \leq k$. 
Yet, in the notations we introduced, the quantity we want to pushforward takes the form of a product over upper indices $1\leq r \leq g$, so we will carry out the extraction as 
$${p_2}_\star= \int \prod_{r=1}^g \left[ D(\psi^r,\ov{\psi^r}) \right].$$ 

where $D(\psi^r,\ov{\psi^r})= \prod_{i=1}^k d\psi^r_i d\ov{\psi^r}_i$.

The analogue of Fubini's theorem for Berezin integrals then gives:

\eq{
{p_2}_\star \left(\prod_{r=1}^g \left( e^{(\ov{\psi^r})^T K \psi^r - \ov{\lambda^r} \psi^r - \ov{\psi^r} \lambda} \right) \right)&=\int   \prod_{r=1}^g \left[ D(\psi^r,\ov{\psi^r}) \right] \prod_{r=1}^g e^{(\ov{\psi^r})^T K \psi^r - \ov{\lambda^r} \psi^r - \ov{\psi^r} \lambda}\\
&=\prod_{r=1}^g \int   D(\psi^r,\ov{\psi^r})   e^{(\ov{\psi^r})^T K \psi^r - \ov{\lambda^r} \psi^r - \ov{\psi^r} \lambda}
} 
By Wick’s theorem for "complex fermions" (see \cite{Caracciolo_Sokal_Sportiello_2013}[Thm. A.16]), each term in the product can be expressed as   
$$\int   D(\psi^r,\ov{\psi^r})   e^{(\ov{\psi^r})^T K \psi^r - \ov{\lambda^r} \psi^r - \ov{\psi^r} \lambda}=\det(K) e^{-\ov{\lambda^r} K^{-1} \lambda^r}$$

Recalling that all the components of $\ov{\lambda^r}$ (resp. $\lambda^r$) are equal to $\alpha^r$ resp. $\beta^r$ and that  $\theta=\sum_r \alpha^r \beta^r$, we get $\ov{\lambda^r} K^{-1} \lambda^r=|K^{-1}| \alpha^r \beta^r$ and thus

    $$\prod_{r=1}^g \det(K) e^{-\ov{\lambda^r} K^{-1} \lambda^r}=\det(K)^g e^{-|K^{-1}| \theta}$$

\end{proof}

Gathering what we have shown so far, we can prove theorem \ref{thm:main} and \ref{thm:maximalparticules}.

\begin{proof}[Proof of theorem 1]

Let $(K,d,g,\vec{n}_0)$ be a configuration such that $K-I$ is non negative and $K\vec{n}_0+(g-1)\vec{K}=\vec{d}$ .

By the the Grothendieck-Riemann-Roch formula, as well as corollary \ref{corr:kodaira} and proposition \ref{prop:tomainthm}, we have 
$$\ch\left( V_{K,d,g,\vec{n}_0}\right)=p_\star \left(e^{c_1(\mathcal{L}_K)} \td\left(\prod_{i=1}^k S^{n_i}C\right)\right)=\det(K)^g e^{|K^{-1}|\theta}$$

\end{proof}

\begin{proof}[Proof of theorem 2]

Let $(K,d,g,\vec{n})$ be a configuration such that the associated $\vec{p}$ has a negative component $p_i$. Let $y \in \pic^d(C)$ and $L_K={\mathcal{L}_K}_{|y}$. Recall that a fiber of $S^{n_i} C \to \pic^{n_i} C$ is isomorphic to $\mathbb{P}^{n-g}$. Looking at the Chern class of $L_K$, one can see that the restriction of $L_K$ to such a fiber is isomorphic to $O(p_i)$. $L_K$ thus cannot have global sections, otherwise by restriction one would get global sections of $O(p_i)$. Since this is true for any $y\in \pic^d(C)$, $\rk(V_{k,d,g,\vec{n}})=0$.\\

For the second assertion of the theorem, take a configuration $(K,d,g,\vec{n_0})$ such that $K-I$ is non negative and $K\vec{n}_0+(g-1)\vec{K}=\vec{d}$. We already know by theorem \ref{thm:main} that $\rk(V_{k,d,g,\vec{n}_0})>0$.

Let $\vec{n}$ such that $\vec{n}>\vec{n}_0$ in the sense that $\vec{n}-\vec{n}_0$ has non negative components with at least one being positive. Then $\vec{p}=\vec{d}-K\vec{n}-(g-1)\vec{K}$ has a negative component, and thus $\rk(V_{K,d,g,\vec{n}})=0$.\\

For the last assertion of the theorem, take a configuration $(K,d,g,\vec{n_0})$ such that $K-I$ is non negative, $K\vec{n}_0+(g-1)\vec{K}=\vec{d}$ and $C_i:=\sum_{j} K_{ji}^{-1} \geq 0$ for $1\leq i \leq k$. For any other particle number configuration $\vec{n}$, one has $\vec{n}-\vec{n}_0=-K^{-1} \vec{p}$. Multiplying by $\vec{1}=(1,\dots,1)^T$ this relation, we get $N-N_0=-\vec{1}^TK^{-1} \vec{p}=-(C_1,\dots,C_k)^T \vec{p}$ where $N$ (resp. $N_0$) are the total particle numbers for the particle configuration $\vec{n}$ (resp. $\vec{n_0}$). In order for quantum states with particle number $\vec{n}$ to exist, we need the components of $\vec{p}$ to be non negative, in which case $-(C_1,\dots,C_k)^T \vec{p}\leq 0$

\end{proof}

\color{black}

\subsection{The general case}

For $K \in M^k(\mathbb{R})$ and $I \subset [k]$, we denote by $K^\sharp$ its adjugate matrix, i.e. the transpose of the cofactor matrix of $K$ and by $K_I$ the submatrix of the matrix $K$ obtained by keeping only the columns and rows corresponding to the indices in $I$. When $I \subset [k]$, we denote by $I^c$ the complementary of $I$ in $[k]$. 

\begin{proposition}
\label{prop:multilayer_Chern_character}
For any $K,g,d$ and $\vec{n}$ with $K$ symmetric,
    \eq{
    p_\star \left(e^{c_1(\mathcal{L}_K)} \td\left(\prod_{i=1}^k S^{n_i}C\right)\right)=  \sum_{\substack{|v|+|w|=g}} C_{v,w}  \prod_{I\subset{k}} |{K_{I^c}}^{\sharp}|^{v_I} \det(K_{I^c})^{w_I}\frac{(-\theta^{|v|})}{|v|!}}

where $$C_{v,w}= \binom{|v|}{v}\binom{g-|v|}{w} \prod_{i=1}^k \binom{n_i-g+p_i}{p_i-\sum_{ I | i \in I} (v_I+w_I)}.$$

In these expressions, $v=(v_I)_{I\subset [k]}$ and $w=(w_I)_{I\subset [k]}$ are collections of $2^k$ non negative integers indexed by $I$. For such collections  we write $|v|$ (resp. $|w|$) their sum.

$\sum_{\substack{|v|+|w|=g}}$ is the sum over all possibles values of the $v_I$'s and $w_I$'s such that $$|v|+|w|=\sum_I v_I + w_I=g.$$ 
As $v$ is a multi-indice, $\binom{|v|}{v}$ for instance is a multinomial coefficient: $\binom{|v|}{v}=\frac{|v|!}{\prod_{I\subset [k]} v_I!}$.

\end{proposition}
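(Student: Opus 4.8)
The plan is to resume the computation exactly where Lemma \ref{lem:firstpushforward} leaves off, so that the task reduces to applying the second pushforward $p_{2\star}$ (the Berezin extraction of section \ref{subsec:computationoftherhs}) to the class $\left(\prod_i f_i(\theta_i)\right)e^{\sum_i(K_{ii}\theta_i-\eta_i)+\sum_{i<j}K_{ij}\eta_{ij}}$. The only new ingredient relative to Proposition \ref{prop:tomainthm} is the polynomial prefactor $\prod_i f_i(\theta_i)$, which is no longer trivial; the exponential, its expression in terms of the Grassmann symbols $\psi_i^r,\overline{\psi_i^r},\lambda^r,\overline{\lambda^r}$, and the Fubini factorization of the integral over the level index $r$ all carry over unchanged.

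First I would rewrite each $f_i(\theta_i)$ as an insertion in the Grassmann algebra. Since $\theta_i=\sum_r\overline{\psi_i^r}\psi_i^r$, with each summand even and squaring to zero, one has $\theta_i^{a}/a!=\sum_{|S_i|=a,\,S_i\subset[g]}\prod_{r\in S_i}\overline{\psi_i^r}\psi_i^r$, hence $f_i(\theta_i)=\sum_{S_i\subset[g]}\binom{n_i-g+p_i}{p_i-|S_i|}\prod_{r\in S_i}\overline{\psi_i^r}\psi_i^r$. Expanding the product over $i$ and setting $T_r:=\{i:r\in S_i\}\subset[k]$, the total insertion regroups by level, with no sign since all factors are even, as $\prod_r\prod_{i\in T_r}\overline{\psi_i^r}\psi_i^r$. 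Because the exponential also factorizes over $r$, the Fubini property of the Berezin integral presents the pushforward as a sum over all families $(T_r)_{r=1}^g$ of the product over $r$ of the level integrals-with-insertion $W_{T_r}$, weighted by $\prod_i\binom{n_i-g+p_i}{p_i-m_i}$, where $m_i=\#\{r:i\in T_r\}=\sum_{I\ni i}(v_I+w_I)$ once the multiplicities are introduced below.

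The heart of the computation is the level integral $W_T=\int D(\psi,\overline{\psi})\,\bigl(\prod_{i\in T}\overline{\psi_i}\psi_i\bigr)\,e^{(\overline{\psi})^\top K\psi-\overline{\lambda}\psi-\overline{\psi}\lambda}$ (dropping the level superscript). The key observation is that an inserted Grassmann generator saturates: since $\chi^2=0$ one has $\int d\chi\,\chi\,F(\chi)=\pm F(0)$, so the insertion $\prod_{i\in T}\overline{\psi_i}\psi_i$ forces every variable with an index in $T$ to vanish in the remaining exponential. This collapses the bilinear form to the principal submatrix $K_{T^c}$ and leaves a Gaussian over the $T^c$-variables alone; the complex-fermion Wick theorem (Thm. A.16 of \cite{Caracciolo_Sokal_Sportiello_2013}), applied exactly as in Proposition \ref{prop:tomainthm}, gives $W_T=\det(K_{T^c})\,e^{-\overline{\lambda}_{T^c}(K_{T^c})^{-1}\lambda_{T^c}}$. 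Using $(\alpha^r\beta^r)^2=0$ together with $\det(K_{T^c})(K_{T^c})^{-1}=(K_{T^c})^\sharp$, this equals $W_T=\det(K_{I^c})-|K_{I^c}^\sharp|\,\alpha^r\beta^r$ with $I=T$, the degenerate case $T^c=\emptyset$ giving $W_{[k]}=1$. Taking $T=\emptyset$ recovers the factor of Proposition \ref{prop:tomainthm}, which is the consistency check.

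Finally I would reorganize the sum over families $(T_r)_r$ by recording, for each subset $I\subset[k]$, how many levels $r$ with $T_r=I$ contribute the scalar term $\det(K_{I^c})$ (count $w_I$) versus the one-form term $-|K_{I^c}^\sharp|\,\alpha^r\beta^r$ (count $v_I$), with necessarily $|v|+|w|=g$. Distributing the $g$ levels among these labels contributes the multinomials $\binom{|v|}{v}$ (sharp labels among the $|v|$ one-form levels) and $\binom{g-|v|}{w}$ (determinant labels among the remaining $g-|v|$), while summing $\prod_{r\in R}\alpha^r\beta^r$ over all choices $|R|=|v|$ of one-form levels yields $\theta^{|v|}/|v|!$ since $\theta=\sum_r\alpha^r\beta^r$. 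The sign $(-1)^{|v|}$ carried by the $v_I$ one-form factors combines with $\theta^{|v|}$ into $(-\theta)^{|v|}/|v|!$, and the binomial weights assemble precisely into $C_{v,w}=\binom{|v|}{v}\binom{g-|v|}{w}\prod_i\binom{n_i-g+p_i}{p_i-\sum_{I\ni i}(v_I+w_I)}$, giving the stated formula. I expect the main obstacle to be the bookkeeping: justifying the insertion-saturation reduction and the passage to the principal submatrix $K_{T^c}$ rigorously, including the orientation signs hidden in $D(\psi,\overline{\psi})$ and the empty-submatrix convention, and checking that the reindexing from $(S_i)_i$ to the multiplicities $(v_I,w_I)$ reproduces both multinomial coefficients and the correct signed power of $\theta$.
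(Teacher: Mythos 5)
Your proposal is correct and follows essentially the same route as the paper: expand each $f_i(\theta_i)$ into Grassmann monomials indexed by subsets, regroup them level by level into insertions $(\ov{\psi^r}\psi^r)_{I_r}$, evaluate each level integral by the complex-fermion Wick theorem to get $\det(K_{I_r^c})e^{-|K_{I_r^c}^{-1}|\alpha^r\beta^r}$ (this is exactly the paper's Lemma \ref{lemma:intermediate-noquasiholes}), and recombine by the multiplicities $(v_I,w_I)$ to produce the multinomial coefficients and the $(-\theta)^{|v|}/|v|!$ factor. The only presentational difference is that you justify the per-level identity by the insertion-saturation reduction to the principal submatrix $K_{T^c}$ rather than citing Wick's theorem with the insertion directly, which is a sound and equivalent derivation.
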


With the notations introduced in the proof of proposition \ref{prop:tomainthm}, we can express the pushforward by $p$ as:
\eqtag{
\int \prod_{r=1}^g  D(\psi^r,\ov{\psi^r}) \left( \prod_{i=1}^k f_i \left(\sum_{r=1}^g \ov{\psi}_i^r \psi_i^r \right)  e^{(\ov{\psi^r})^T K \psi^r - \ov{\lambda} \psi^r - \ov{\psi^r} \lambda} \right)\\
}{eq:full_computation}

The difference with the case of theorem \ref{thm:main} is that since now the $p_i$'s are not all zeros, all the $f_i's$ are not equal to one. We will thus have monomials in front of $$\prod_{r=1}^g e^{(\ov{\psi^r})^T K \psi^r - \ov{\lambda^r} \psi^r - \ov{\psi^r} \lambda} $$ coming from the expansion of $\prod_{i=1}^k f_i \left(\sum_{r=1}^g \ov{\psi}_i^r \psi_i^r \right)$. Those monomials are of the form $\prod_r (\ov{\psi^r} \psi^r)_{I_r}$ for some given $I_r \subset [k]=\lbrace 1,\dots, k\rbrace$ where for each $r$, $(\ov{\psi^r} \psi^r)_{I_r}:=\prod_{i\in I_r}\ov{\psi^r}_i \psi^r_i$. In all these products the ordering again does not matter since each pair $\ov{\psi^r}_i \psi^r_i$ is even. 

We will first carry out the computation for a single monomial, before proving \ref{prop:multilayer_Chern_character}:

\begin{lemma}
\label{lemma:intermediate-noquasiholes}
   Fix $I_1 \subset [k] , \dots, I_g \subset [k]$. We have 
   $$\int    \prod_{r=1}^g \left[ D(\psi^r,\ov{\psi^r}) \right] \prod_{r=1}^g \left[  (\ov{\psi^r} \psi^r )_{I_r} e^{(\ov{\psi^r})^T K \psi^r - \ov{\lambda^r} \psi^r - \ov{\psi^r} \lambda} \right]=\sum_{F\subset[g]} (-\alpha \beta)^F \prod_{r \in F} |{K_{I^c_r}}^{-1}| \prod_{r=1}^g\det(K_{I_r^c})$$ where we wrote $(\alpha \beta)^F=\prod_{r\in F} \alpha^r \beta^r $
\end{lemma}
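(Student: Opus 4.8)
The plan is to exploit the factorization of the Berezin integral over the upper index $r$, exactly as in the proof of Proposition \ref{prop:tomainthm}. By the Berezin analogue of Fubini's theorem the integral splits as a product over $r$ of single-index integrals
$$\prod_{r=1}^g \int D(\psi^r,\ov{\psi^r})\,(\ov{\psi^r}\psi^r)_{I_r}\,e^{(\ov{\psi^r})^T K \psi^r - \ov{\lambda^r}\psi^r - \ov{\psi^r}\lambda},$$
so it suffices to evaluate each factor separately and then recombine.

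For a fixed $r$, the key observation is that the inserted monomial $(\ov{\psi^r}\psi^r)_{I_r}=\prod_{i\in I_r}\ov{\psi^r}_i\psi^r_i$ already contains each generator $\ov{\psi^r}_i$ and $\psi^r_i$ with $i\in I_r$ exactly once. Since every generator squares to zero, any term in the expansion of the exponential that involves a variable indexed by $I_r$ is annihilated after multiplication by the insertion. Hence inside this single-index integral I may discard from the exponent every contribution touching $I_r$: the surviving part of $(\ov{\psi^r})^T K \psi^r - \ov{\lambda^r}\psi^r - \ov{\psi^r}\lambda$ is precisely the Gaussian form on the complementary variables, governed by the submatrix $K_{I_r^c}$ with sources restricted to $I_r^c$. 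The integrand then factors as (insertion on $I_r$-variables) times (Gaussian on $I_r^c$-variables); as both factors are even and involve disjoint generators, the measure splits cleanly with no reordering sign. The $I_r$-integration acts only on the insertion, contributing $\int \prod_{i\in I_r} d\psi^r_i\, d\ov{\psi^r}_i \prod_{i\in I_r}\ov{\psi^r}_i\psi^r_i = 1$ (each even pair $\ov{\psi^r}_i\psi^r_i$ passes freely through the measure), while applying Wick's theorem (\cite{Caracciolo_Sokal_Sportiello_2013}[Thm. A.16]) to the $I_r^c$-Gaussian yields
$$\int D(\psi^r,\ov{\psi^r})\,(\ov{\psi^r}\psi^r)_{I_r}\,e^{(\ov{\psi^r})^T K \psi^r - \ov{\lambda^r}\psi^r - \ov{\psi^r}\lambda}=\det(K_{I_r^c})\,e^{-|{K_{I_r^c}}^{-1}|\,\alpha^r\beta^r},$$
where I have used that every component of $\ov{\lambda^r}$ equals $\alpha^r$ and every component of $\lambda^r$ equals $\beta^r$, so the source quadratic form collapses to $|{K_{I_r^c}}^{-1}|\,\alpha^r\beta^r$.

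It remains to recombine. Since $\alpha^r\beta^r$ is a $2$-form it is nilpotent, so the exponential linearizes to $1-|{K_{I_r^c}}^{-1}|\,\alpha^r\beta^r$. Multiplying the $g$ single-index factors (all even, hence freely commuting) gives
$$\Big(\prod_{r=1}^g\det(K_{I_r^c})\Big)\prod_{r=1}^g\big(1-|{K_{I_r^c}}^{-1}|\,\alpha^r\beta^r\big),$$
and expanding the second product over the subsets $F\subset[g]$ of indices for which the linear term is selected produces exactly $\sum_{F\subset[g]}(-\alpha\beta)^F\prod_{r\in F}|{K_{I_r^c}}^{-1}|\prod_{r=1}^g\det(K_{I_r^c})$, which is the claimed identity.

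The step I expect to be the main obstacle is making the reduction in the second paragraph fully rigorous: justifying that inserting $(\ov{\psi^r}\psi^r)_{I_r}$ is exactly equivalent to passing to the subsystem indexed by $I_r^c$, and confirming that neither the saturation of the $I_r$-variables nor the splitting of the measure introduces a spurious sign. A clean sanity check is the case $I_r=\emptyset$ for all $r$: then $K_{I_r^c}=K$, and the formula collapses to $\det(K)^g\prod_r\big(1-|K^{-1}|\alpha^r\beta^r\big)=\det(K)^g e^{-|K^{-1}|\theta}$, recovering Proposition \ref{prop:tomainthm} and confirming that the sign conventions are consistent.
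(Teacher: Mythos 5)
Your proof is correct and follows essentially the same route as the paper's: factorize the Berezin integral over $r$ by Fubini, evaluate each single-index factor as $\det(K_{I_r^c})\,e^{-|{K_{I_r^c}}^{-1}|\alpha^r\beta^r}$ via Wick's theorem, and expand the product of (nilpotent) exponentials over subsets $F\subset[g]$. The only difference is that you make explicit the saturation argument --- that the insertion $(\ov{\psi^r}\psi^r)_{I_r}$ kills every exponential term touching $I_r$ and reduces the Gaussian to the subsystem on $I_r^c$ --- which the paper absorbs into its direct invocation of ``Wick's theorem in this context''; this added detail is sound and the sign bookkeeping checks out.
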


\begin{proof}
    
Fix $I_1 \subset [k] , \dots, I_g \subset [k]$. For all $1\leq r\leq g$, Wick's theorem in this context implies

$$\int D(\psi^r,\ov{\psi^r}) (\ov{\psi^r} \psi^r )_{I_r} e^{(\ov{\psi^r})^T K \psi^r - \ov{\lambda^r} \psi^r - \ov{\psi^r} \lambda}=\det(K_{I_r^c}) e^{-|{K_{I_r^c}}^{-1}|\alpha^r \beta^r}$$

thus

\eq{
&\int    \prod_{r=1}^g \left[ D(\psi^r,\ov{\psi^r}) \right] \prod_{r=1}^g \left[  (\ov{\psi^r} \psi^r )_{I_r} e^{(\ov{\psi^r})^T K \psi^r - \ov{\lambda^r} \psi^r - \ov{\psi^r} \lambda} \right]\\&=
\prod_{r=1}^g \int D(\psi^r,\ov{\psi^r}) (\ov{\psi^r} \psi^r )_{I_r} e^{(\ov{\psi^r})^T K \psi^r - \ov{\lambda^r} \psi^r - \ov{\psi^r} \lambda} \\
&=\prod_r \det(K_{I_r^c}) e^{-|{K_{I_r^c}}^{-1}|\alpha^r \beta^r}
}

Developing the product of exponential, we obtain 

$$\prod_r \det(K_{I_r^c}) e^{-|{K_{I_r^c}}^{-1}|\alpha^r \beta^r}=\sum_{F\subset[g]} (-\alpha \beta)^F \prod_{r \in F} |{K_{I^c_r}}^{-1}| \prod_{r=1}^g\det(K_{I_r^c}) $$

\end{proof}

\begin{proof}[Proof of proposition \ref{prop:multilayer_Chern_character}]

In lemma \ref{lemma:intermediate-noquasiholes} we computed the pushforward for all monomials. We start from Eq. \ref{eq:full_computation} and expand $\prod_{i=1}^k f_i \left(\sum_{r=1}^g \ov{\psi}_i^r \psi_i^r \right)$. The expansion reads 
$$\prod_{i=1}^k f_i \left(\sum_{r=1}^g \ov{\psi}_i^r \psi_i^r \right)= \prod_i {a_i!}  \sum_{\substack{I_1,\dots,I_g\\ \sharp\lbrace \bullet, c\in I_\bullet\rbrace=a_c}} \prod_{r} (\ov{\psi^r} \psi^r)_{I_r}$$ where $\sharp\lbrace \bullet, c\in I_\bullet\rbrace $ is the number of sets among $I_1,\dots I_g$ which contains $c$. Inserting this expansion in Eq. \ref{eq:full_computation}, we get

\eq{
&\sum_{a_1,\dots,a_k} \left[\prod_{i=1}^k \binom{n_i-g+p_i}{p_i-a_i}\right]  \sum_{\substack{I_1,\dots,I_g\\  \sharp\lbrace \bullet, c\in I_\bullet\rbrace=a_c}}  \sum_{F\subset[g]} (-\alpha \beta)^F \prod_{r \in F} |{K_{I^c_r}}^{-1}| \prod_{r=1}^g\det(K_{I_r^c})   \\
= &\sum_{a_1,\dots,a_k} \left[\prod_{i=1}^k \binom{n_i-g+p_i}{p_i-a_i}\right]  \sum_{\substack{I_1,\dots,I_g\\  \sharp\lbrace \bullet, c\in I_\bullet\rbrace=a_c}} \sum_m \frac{(-\theta)^m}{m!} \prod_{r=1}^m |{K_{I^c_r}}^{-1}| \prod_{r=1}^g\det(K_{I_r^c}) 
}

We get the final expression by regrouping the $|K^{-1}_I|$ as well as the $\det(K_I)$ when they are indexed by the same set $I$.

\end{proof}

By the Grothendieck-Riemann-Roch theorem and the result of subsection \ref{subsec:kodaira} about the vanishing of higher direct images, we get:

\begin{theorem}
\label{thm:multilayer_Chern_character}
Let $K,g,d$ and $\vec{n}$ with $K-I$ non-negative and $p_i>-(n_i+1-g)$ for all $i$. Then
    \eq{
    \ch\left(V_{K,g,d,\vec{n}}\right)=  \sum_{\substack{|v|+|w|=g}} C_{v,w}  \prod_{I\subset{k}} |{K_{I^c}}^{\sharp}|^{v_I} \det(K_{I^c})^{w_I}\frac{(-\theta^{|v|})}{|v|!}
    }
\end{theorem}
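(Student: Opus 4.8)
The plan is to assemble the two pieces already established, namely the Grothendieck--Riemann--Roch evaluation of the pushforward class in Proposition \ref{prop:multilayer_Chern_character} and the vanishing of the higher direct images in Proposition \ref{prop:kodairavanishing}, and to verify that the hypotheses of the theorem are exactly what is needed to glue them. No new computation should be required; the content has been front-loaded into the earlier results.

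First I would write down the Grothendieck--Riemann--Roch identity (\ref{eq:GRR}) for the proper map $p$ and the line bundle $\mathcal{L}_K$,
$$\ch\left(\sum_i (-1)^i R^i p_\star \mathcal{L}_K\right) = p_\star\left(e^{c_1(\mathcal{L}_K)} \td\left(\prod_{i=1}^k S^{n_i}C\right)\right),$$
which holds unconditionally. The right-hand side is precisely the quantity computed in Proposition \ref{prop:multilayer_Chern_character}; since that proposition is stated for an arbitrary symmetric $K$ and carries no positivity assumption, it applies verbatim and produces the claimed combinatorial sum. In particular the adjugate $K_{I^c}^{\sharp}$ appearing in the statement already encodes the correct bookkeeping of the factors $\det(K_{I_r^c})\,|K_{I_r^c}^{-1}|$ coming from Lemma \ref{lemma:intermediate-noquasiholes} (via $K^{\sharp}=\det(K)K^{-1}$), so nothing further is needed on that side.

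The second step is to collapse the left-hand side to $\ch(V_{K,g,d,\vec{n}})$. Here I would invoke the hypotheses of the theorem, $K-I$ non-negative and $p_i>-(n_i+1-g)$ for all $i$: these are exactly the assumptions of Proposition \ref{prop:kodairavanishing}, so for every $y\in\pic^d(C)$ and every $i>0$ one has $H^i(\prod_i S^{n_i}C, {\mathcal{L}_K}_{|y})=0$. By Grauert's theorem (Corollary \ref{corr:kodaira}) this forces $R^i p_\star \mathcal{L}_K=0$ for $i>0$ and makes $R^0 p_\star \mathcal{L}_K=V_{K,g,d,\vec{n}}$ locally free, so the alternating sum on the left reduces to $\ch(V_{K,g,d,\vec{n}})$. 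Equating the two sides then gives the stated formula.

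I do not expect a genuine obstacle at the level of this theorem: the analytic work is absorbed into Proposition \ref{prop:kodairavanishing} (ampleness of $L_K\otimes\omega_X^{-1}$ through Lemmas \ref{lem:condition-K-diviseur-neff} and \ref{lem:pullbackofamplebyproj} together with Kleiman's criterion), and the combinatorial work into the Berezin/Wick evaluation of Proposition \ref{prop:multilayer_Chern_character}. The one point to check carefully is that the two hypothesis sets are compatible, i.e.\ that the positivity conditions imposed for the vanishing do not shrink the range of $\vec{n}$ allowed by Proposition \ref{prop:multilayer_Chern_character}; since the latter has no positivity restriction, this compatibility is automatic, and the proof is a direct concatenation of the three displayed facts.
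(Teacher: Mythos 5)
Your proposal is correct and follows exactly the paper's own route: the theorem is obtained by concatenating the Grothendieck--Riemann--Roch identity (\ref{eq:GRR}), the pushforward evaluation of Proposition \ref{prop:multilayer_Chern_character}, and the vanishing of higher direct images from Proposition \ref{prop:kodairavanishing} together with Corollary \ref{corr:kodaira}. Nothing is missing.
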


\paragraph{Some examples\\}

Before looking at the asymptotics for large magnetic fields, we give a few examples. Let $(K,d,g,\vec{n})$ be a configuration with 

\[
K=\begin{pmatrix}
b+1 & b & \cdots & b \\
b & b+1 & \cdots & b \\
\vdots & \vdots & \ddots & \vdots \\
b & b & \cdots & b+1
\end{pmatrix}
\]
for $b$ some positive integer. One has $K-I\geq 0$, the associated degeneracy is $\det(K)^g=(kb+1)^g$ and the conductivity is $|K^{-1}|=\frac{k}{kb+1}$. Note that this matrix satisfies $C_i=\sum_{j} K^{-1}_{ij}=\frac{1}{kb+1}>0$ for all $1\leq i \leq k$, hence the configuration maximizing the total number of particle is the configuration without non-localized quasi-holes.

Another example, which won't satisfy this condition is $$K=\begin{pmatrix}
10 & 3 \\
3 & 2  \\
\end{pmatrix}.$$

One has $K-I\geq 0$, $K^{-1}=\frac{1}{11} 
\begin{pmatrix}
2 & -3 \\
-3 & 10  \\
\end{pmatrix}$. $|K^{-1}|=\frac{14}{11}$. Note that $C_2=\sum_{j} K^{-1}_{2j}=\frac{7}{11}$ but $C_1=\sum_{j} K^{-1}_{1j}=-\frac{1}{11}\leq 0$. Let $(K,g,d,\vec{n_0})$ be a configuration such that $\vec{p_0}=0$. Given any other configuration $(K,g,d,\vec{n})$ where only the number of particles in each layer is different, recall that we have $\vec{n}=\vec{n_0}-K^{-1}\vec{p}$. In particular, $N=N_0-\sum C_ip_i$. This is an example of a configuration where $\vec{n_0}$ does not maximize the total number of particles. We can obtain multiple other configurations with a higher total number of particles from $(K,g,d,\vec{n_0})$ by raising the number of quasi-holes provided that $p_1>7p_2$ and that both are a multiple of $11$. For example: if one takes $p_1=88$ and $p_2=11$, the number of particles in layer $1$ decreases by $13$ but the number of particles in layer $2$ increases by $14$ compared to the situation with $p_1=p_2=0$. Now let's take $p_1=11$ and $p_2=0$. Compared to the situation with $p_1=p_2=0$, the number of particles in layer $1$ decreases by $2$ and the number of particles in layer $2$ increases by $3$ . Let $m=\left \lfloor{\frac{(n_0)_1}{2}}\right \rfloor$. The maximum total number of particles is $N=N_0+m$, it corresponds to $\vec{n}=\vec{n_0}+(-2m, 3m)$. It amounts to emptying the first layer.

\section{Asymptotic for large magnetic field}
\label{sec:asymptotics}

The formula obtained in the general case is rather complicated. However, we can study its asymptotics for large magnetic field and large particle numbers. 

\begin{lemma}
    \label{lem:Lidet}

    Let $K$ be an invertible symmetric matrix of size $k$, and write $C_i=\sum_{j=1}^k K^{-1}_{ij}$. For any  $i \in [k]$, one has:
        \eqtag{\frac{\det(K_{\lbrace i \rbrace^c })}{\det(K)} \left(  |K^{-1}| - |K^{-1}_{\lbrace i \rbrace^c }| \right)=C_i^2}{eq:detLi}
        where we recall that the notation $K_{\lbrace i \rbrace}^c$ means the matrix obtained from $K$ by keeping the rows and lines in the set $\lbrace i \rbrace^c = \lbrace 1,\dots,i-1,i+1,\dots k$.
    
    \end{lemma}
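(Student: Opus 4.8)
The plan is to reduce the whole identity to entries of $K^{-1}$ by using two standard facts and then expanding. First I would observe that the prefactor is itself an entry of the inverse: since the $(i,i)$ cofactor of $K$ is exactly $\det(K_{\{i\}^c})$, Cramer's rule gives
\[
\frac{\det(K_{\{i\}^c})}{\det(K)} = K^{-1}_{ii}.
\]
This rewrites the left-hand side of \eqref{eq:detLi} as $K^{-1}_{ii}\bigl(|K^{-1}| - |K^{-1}_{\{i\}^c}|\bigr)$, turning the statement into a pure algebraic identity among the entries of $K^{-1}$ and those of the inverse of the principal submatrix $K_{\{i\}^c}$.

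The second ingredient I would bring in is the classical relation between the inverse of a principal submatrix and the full inverse. Splitting off the $i$-th row and column and writing the Schur complement $s = K_{ii} - b^{T} A^{-1} b$, where $A = K_{\{i\}^c}$ and $b$ is the off-diagonal part of row $i$, the block-inverse formula gives $K^{-1}_{ii} = 1/s$ and expresses the top-left block of $K^{-1}$ as $A^{-1}$ plus a rank-one term. Reading this off as a per-entry statement yields the rank-one correction
\[
(K_{\{i\}^c})^{-1}_{jl} = K^{-1}_{jl} - \frac{K^{-1}_{ji}\,K^{-1}_{il}}{K^{-1}_{ii}}, \qquad j,l \neq i.
\]

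I would then simply sum this identity over all $j,l \neq i$. Using symmetry of $K^{-1}$ and the definition $C_i = \sum_j K^{-1}_{ij}$, the three relevant sums evaluate to
\[
\sum_{j,l\neq i} K^{-1}_{jl} = |K^{-1}| - 2C_i + K^{-1}_{ii}, \qquad \sum_{j\neq i} K^{-1}_{ji} = \sum_{l\neq i} K^{-1}_{il} = C_i - K^{-1}_{ii},
\]
so that
\[
|K^{-1}_{\{i\}^c}| = |K^{-1}| - 2C_i + K^{-1}_{ii} - \frac{\bigl(C_i - K^{-1}_{ii}\bigr)^2}{K^{-1}_{ii}}.
\]
Substituting this into $K^{-1}_{ii}\bigl(|K^{-1}| - |K^{-1}_{\{i\}^c}|\bigr)$ and expanding the square, every term containing $K^{-1}_{ii}$ cancels and exactly $C_i^2$ survives, which is the claim.

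The computation is entirely routine once the two ingredients are in place; the only step I would double-check carefully is the bookkeeping in the double sum $\sum_{j,l\neq i}$, namely correctly subtracting the row $j=i$ and the column $l=i$ while adding back the doubly-counted corner $K^{-1}_{ii}$. An off-by-one in that accounting is the one thing that would break the final cancellation, so that is where I expect the only (mild) obstacle to lie.
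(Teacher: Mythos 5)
Your proof is correct, but it takes a genuinely different route from the paper's. You reduce everything to linear algebra: Cramer's rule identifies the prefactor as $K^{-1}_{ii}$, the Schur-complement block-inverse formula gives the rank-one correction $(K_{\{i\}^c})^{-1}_{jl}=K^{-1}_{jl}-K^{-1}_{ji}K^{-1}_{il}/K^{-1}_{ii}$, and summing over $j,l\neq i$ produces a cancellation leaving $C_i^2$; I checked the bookkeeping in the double sum and the final expansion, and both are right. The paper instead stays inside the Grassmann-integral formalism it has already set up: it evaluates the Berezin integral $P=\int D(\psi,\ov{\psi})\,(\ov{\psi}\psi)_{I}\,e^{\ov{\psi}K\psi-\ov{\lambda}\psi-\ov{\psi}\lambda}$ in two ways --- once directly by Wick's theorem, giving $\det(K_{I^c})e^{-|K_{I^c}^{-1}|\alpha\beta}$, and once after completing the square in $\psi$, giving $e^{-|K^{-1}|\alpha\beta}$ times a polynomial correction --- and compares the coefficients of $\alpha\beta$ for $I=\{i\}$. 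Your argument is more elementary and self-contained (no fermionic Wick calculus needed), while the paper's version reuses machinery already present and is set up to handle general subsets $I$, not just singletons. One small point to make explicit: your Schur-complement step requires $K_{\{i\}^c}$ to be invertible (equivalently $K^{-1}_{ii}\neq 0$); this is implicit in the statement anyway, since $|K^{-1}_{\{i\}^c}|$ must be defined, and it holds automatically in the paper's application where $K$ is positive definite.
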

    
    \begin{proof}
    
    Take the Grassmann algebra generated by $\psi_1,\dots,\psi_k$, $\ov{\psi}_1,\dots, \ov{\psi}_k$,$\alpha$,$\beta$ and let $\lambda=(\beta,\dots,\beta)$,  $\ov{\lambda}=(\alpha,\dots,\alpha)$.
    
    Let $I \subset [k]$ and $$P=\int D(\psi,\ov{\psi}) (\ov{\psi} \psi )_{I} e^{\ov{\psi} K \psi - \ov{\lambda} \psi - \ov{\psi} \lambda}$$
    
    On the one hand,  $P=\det(K_{I^c}) e^{-|{K_{I^c}}^{-1}|\alpha \beta}$. On the other hand:
    
    \eq{
        P &= e^{-\ov{\lambda} K^{-1} \lambda}\int D\psi \ov{\psi}(\psi+K^{-1}\lambda)_I( \ov{\psi}+K^{-1}\ov{\lambda})_I  e^{\ov{\psi^T} K \psi} \\
        &=e^{-|K^{-1}|\alpha \beta} \left( \det(K_{{I}^c})+\alpha \beta \sum_{i,j \in I} C_i C_j (-1)^{i+j}\det{K}_{({I-i,I-j})^c} \right)
    }
    
    Taking $I=\lbrace i \rbrace $, and comparing coefficients in front of $\alpha \beta$, we obtain relation \ref{eq:detLi}.

    \end{proof}

\begin{proposition}
    Let $(K,g,d,\vec{n})$ be a configuration such that $C_i=\sum_j K_{ji}^{-1} > 0$ for all $i$. We study the asymptotics for $K,g$ fixed and large magnetic field, i.e. $d\to \infty$ with the assumption that the system is connected to a reservoir of particles, so that the total number of particle is maximized. Then $$\vec{n} = K^{-1} \vec{d} + o(d).$$
\end{proposition}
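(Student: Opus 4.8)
The plan is to recast the maximization as an integer linear program and to compare its optimum with that of its continuous relaxation. Under the reservoir hypothesis the realized state maximizes the total particle number $N=\sum_i n_i=\vec{1}^T\vec{n}$, with $\vec{1}=(1,\dots,1)^T$, among those configurations whose ground-state space is non-trivial. By Theorem \ref{thm:maximalparticules} the rank vanishes as soon as some component of $\vec{p}=\vec{d}-K\vec{n}-(g-1)\vec{K}$ is negative, and (using $K-I\geq 0$) is positive once all $p_i\geq 0$; so the admissible $\vec{n}\in\mathbb{Z}^k$ are exactly those with $n_i>2g-1$ and $K\vec{n}\leq\vec{b}$ componentwise, where $\vec{b}:=\vec{d}-(g-1)\vec{K}$.

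First I would record the continuous optimum. The point $\vec{n}^*:=K^{-1}\vec{b}$ solves $K\vec{n}^*=\vec{b}$, and since $\vec{d}=d\vec{1}$ one has $\vec{n}^*=d\,K^{-1}\vec{1}-(g-1)K^{-1}\vec{K}=K^{-1}\vec{d}+O(1)$. For any feasible $\vec{n}$, writing $\vec{n}=\vec{n}^*-K^{-1}\vec{p}$ and using that $K$, hence $K^{-1}$, is symmetric so that $\sum_i K^{-1}_{ij}=C_j$, I obtain the identity
$$N=\vec{1}^T\vec{n}^*-\vec{1}^TK^{-1}\vec{p}=\vec{1}^T\vec{n}^*-\sum_j C_j p_j.$$
As every $C_j>0$ and $p_j\geq 0$ on the feasible set, this immediately yields the upper bound $N\leq\vec{1}^T\vec{n}^*$.

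The matching lower bound is where the hypotheses really enter. I would test the rounded-down point $\vec{n}':=\lfloor\vec{n}^*\rfloor$: because the entries of $K$ are non-negative (part of the definition of a configuration), decreasing $\vec{n}$ componentwise can only decrease $K\vec{n}$ componentwise, so $K\vec{n}'\leq K\vec{n}^*=\vec{b}$ and $\vec{n}'$ is feasible; moreover $n_i^*=dC_i+O(1)\to\infty$, so $n_i'>2g-1$ once $d$ is large. Since each floor loses at most $1$, $\vec{1}^T\vec{n}'\geq\vec{1}^T\vec{n}^*-k$, whence the optimal value obeys $N_{\mathrm{opt}}\geq\vec{1}^T\vec{n}^*-k$. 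Combining with the previous step gives $\sum_j C_j p_j\leq k$ at the maximizer, and with $c:=\min_j C_j>0$ this forces $\sum_j p_j\leq k/c$, a bound independent of $d$; thus $\vec{p}=O(1)$ and $\vec{n}=\vec{n}^*-K^{-1}\vec{p}=K^{-1}\vec{d}+O(1)=K^{-1}\vec{d}+o(d)$.

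The main obstacle, and the only place the structural and strictness assumptions are indispensable, is this lower bound. Non-negativity of the entries of $K$ is exactly what makes coordinatewise rounding preserve feasibility, since for a general constraint matrix rounding down could violate $K\vec{n}\leq\vec{b}$; and the strict positivity $C_j>0$ is what converts the scalar bound on $\sum_j C_j p_j$ into a uniform bound on each individual $p_j$, hence on $\|\vec{p}\|$. Everything else is routine, and in fact the argument delivers the stronger conclusion $\vec{n}=K^{-1}\vec{d}+O(1)$, of which the stated $\vec{n}=K^{-1}\vec{d}+o(d)$ is a weakening.
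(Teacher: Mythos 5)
Your proof is correct and follows essentially the same route as the paper: compare the integer maximizer with the continuous solution $\vec{n}^{*}=K^{-1}(\vec{d}-(g-1)\vec{K})$ of the relaxed problem and conclude that $\vec{p}=O(1)$, hence $\vec{n}=K^{-1}\vec{d}+O(1)$. Your version is in fact more complete than the paper's, which merely asserts that the integer maximizer lies within bounded $\ell^{\infty}$-distance of $\vec{n}^{*}$, whereas your sandwich argument (upper bound on $N$ from $C_j>0$, lower bound from the feasibility of $\lfloor\vec{n}^{*}\rfloor$, which uses the entrywise non-negativity of $K$) actually justifies the uniform boundedness of $\vec{p}$.
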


\begin{proof}

Let $(K,d,g,\vec{n})$ as in the proposition. Increasing $d$, the solution $\vec{n_0}$ to $K\vec{n}_0=\vec{d}-(g-1)\vec{K}$ does not necessarily have integer coefficients. Since we assume the $C_i$ to be non-negative, this $n_0$ maximizes the total particle number $\vec{1}^T \vec{n_0}$. Let us write $\vec{n_0}'$ the closest integer vector that maximizes $\vec{1}^T \vec{n}_0'$. We write $\vec{p}=\vec{d}-K\vec{n}_0'-(g-1)\vec{K}$. This closest vector satisfies $||\vec{n}_0'-\vec{n}_0||_\infty \leq 1$ and thus $||p||_{\infty}\leq \max_j \left((\sum_i K_{ij},1\leq j \leq k)\right)$. We get the result taking the asymptotics of the relation $\vec{p}=\vec{d}-K\vec{n}_0'-(g-1)\vec{K}$.

\end{proof}

\begin{proposition}
Let $(K,g,d,\vec{n})$ be a configuration such that $C_i=\sum_j K_{ji}^{-1} > 0$ for all $i$.
We study the asymptotics for $K,g$ fixed and large magnetic field, with fixed quasiholes numbers $\vec{p}$. We chose a sequence of stricklty increasing degrees $d$ such that the solution $\vec{n_0}$ to $K\vec{n}_0=\vec{d}-\vec{p} -(g-1)\vec{K}$ have integer coefficients. Then for this sequence we have 

$$\sigma=\left(|K^{-1}| - \sum_i \frac{p_i}{n_i} C_i^2 + o\left(\frac{1}{d}\right) \right)(-\theta)$$
\end{proposition}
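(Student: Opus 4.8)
The plan is to read off $\ch_0$ and $\ch_1$ from the general Chern character of Theorem~\ref{thm:multilayer_Chern_character} and expand the ratio $\sigma=\ch_1/\ch_0$ to first order in $1/d$. In that formula the summand carrying $(-\theta)^{|v|}$ lands in $\ch_{|v|}$, so $\ch_0$ is the part with $|v|=0$, $|w|=g$, and $\ch_1/(-\theta)$ is the part with $|v|=1$, $|w|=g-1$. Because the $C_i$ are positive, solving $K\vec{n}_0=\vec{d}-\vec{p}-(g-1)\vec{K}$ gives $n_i=C_i\,d+O(1)$, so each $n_i\to\infty$ and the basic small quantity is $r_l:=\binom{n_l-g+p_l}{p_l-1}\big/\binom{n_l-g+p_l}{p_l}=\frac{p_l}{n_l-g+1}=\frac{p_l}{n_l}+O(d^{-2})$, of order $1/d$; the ratios $\det(K_{\{l\}^c})/\det(K)$ stay fixed. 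This is the scale on which I organize the expansion.

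First I isolate the leading terms, which in both $\ch_0$ and $\ch_1$ come from placing all weight on $I=\emptyset$: for $\ch_0$ this is $w_\emptyset=g$, giving $T_0:=\det(K)^g\prod_i\binom{n_i-g+p_i}{p_i}$, and for $\ch_1/(-\theta)$ it is $v_\emptyset=1,\,w_\emptyset=g-1$, giving $|K^{-1}|\,T_0$ after $|K^\sharp|=\det(K)|K^{-1}|$. This recovers the leading order $\sigma=-|K^{-1}|\theta$. The $O(1/d)$ corrections are exactly the collections with $\sum_I|I|(v_I+w_I)=1$, i.e. those moving one unit of weight onto a singleton $\{l\}$; a short check shows any doubleton, or two singletons, forces $\sum_I|I|(v_I+w_I)\ge 2$ and is $O(d^{-2})$. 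Enumerating, I get one family in $\ch_0$ (namely $w_\emptyset=g-1,\,w_{\{l\}}=1$, with coefficient $g\,\det(K_{\{l\}^c})/\det(K)$ of $r_l$), and two families in $\ch_1$: $v_\emptyset=1,\,w_\emptyset=g-2,\,w_{\{l\}}=1$ (coefficient $(g-1)\det(K_{\{l\}^c})/\det(K)$) and $v_{\{l\}}=1,\,w_\emptyset=g-1$ (coefficient $\det(K_{\{l\}^c})\,|K_{\{l\}^c}^{-1}|/(|K^{-1}|\det(K))$, using $|K_{\{l\}^c}^\sharp|=\det(K_{\{l\}^c})\,|K_{\{l\}^c}^{-1}|$).

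Writing $\ch_0=T_0(1+B)$ and $\ch_1/(-\theta)=|K^{-1}|\,T_0(1+A)$ with $A,B=O(1/d)$, I then expand $\sigma/(-\theta)=|K^{-1}|\bigl(1+A-B\bigr)+O(d^{-2})$ and collect the coefficient of $r_l$ in $A-B$:
\[
(g-1)\frac{\det(K_{\{l\}^c})}{\det(K)}+\frac{\det(K_{\{l\}^c})\,|K_{\{l\}^c}^{-1}|}{|K^{-1}|\det(K)}-g\,\frac{\det(K_{\{l\}^c})}{\det(K)}=-\frac{\det(K_{\{l\}^c})}{\det(K)}\cdot\frac{|K^{-1}|-|K_{\{l\}^c}^{-1}|}{|K^{-1}|}.
\]
Now Lemma~\ref{lem:Lidet} applies verbatim, $\frac{\det(K_{\{l\}^c})}{\det(K)}\bigl(|K^{-1}|-|K_{\{l\}^c}^{-1}|\bigr)=C_l^2$, so the bracket collapses to $-C_l^2/|K^{-1}|$ and $\sigma/(-\theta)=|K^{-1}|-\sum_l C_l^2\,r_l+O(d^{-2})$. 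Substituting $r_l=p_l/n_l+O(d^{-2})$ yields $\sigma=\bigl(|K^{-1}|-\sum_i\frac{p_i}{n_i}C_i^2+o(1/d)\bigr)(-\theta)$, as claimed.

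I expect the main obstacle to be the bookkeeping of the second step: proving that the three families above are the complete list of $O(1/d)$ contributions (so everything else is genuinely $O(d^{-2})$) and getting the multinomial factors $\binom{g}{w}$, $\binom{|v|}{v}$ right. Once that accounting is correct the rest is mechanical, since Lemma~\ref{lem:Lidet} is precisely engineered to collapse the surviving combination into $C_l^2$.
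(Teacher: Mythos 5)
Your proposal is correct and follows exactly the paper's route: extract $\ch_0$ and $\ch_1$ from Theorem~\ref{thm:multilayer_Chern_character}, expand the ratio to first order in $1/d$ (your intermediate expression with coefficient $\frac{\det(K_{\{l\}^c})}{\det(K)}\bigl(|K^{-1}_{\{l\}^c}|-|K^{-1}|\bigr)$ is precisely the one displayed in the paper's proof), and collapse it via Lemma~\ref{lem:Lidet}. You in fact supply the bookkeeping — the identification of the three $O(1/d)$ families and the verification that everything else is $O(d^{-2})$ — that the paper leaves implicit.
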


\begin{proof}

Let $(K,d,g,\vec{n})$ as in the proposition. 
    Taking the asymptotic in $d$ of the expression of theorem \ref{thm:multilayer_Chern_character}, one obtains:
$$\sigma=\left(|K^{-1}| + \sum_i \frac{p_i}{n_i} \frac{\det(K_{\lbrace i \rbrace^c })}{\det(K)} \left(  |K^{-1}_{\lbrace i \rbrace^c }| - |K^{-1}| \right) + o\left(\frac{1}{d}\right) \right)(-\theta)$$

One obtains the result using lemma \ref{lem:Lidet}.
\end{proof}

\printbibliography

\end{document}